\documentclass[11pt]{amsart} % draft
\usepackage[T1]{fontenc}
\usepackage{enumitem}
\usepackage[dvipsnames]{xcolor}
\usepackage{mathtools}
\usepackage[pdfusetitle,bookmarksnumbered,colorlinks,
citecolor=ForestGreen,urlcolor=Bittersweet,linkcolor=WildStrawberry]{hyperref}
\usepackage{bookmark}

\usepackage[capitalize,noabbrev]{cleveref}
\crefname{enumi}{condition}{conditions}
\Crefname{enumi}{Condition}{Conditions} % Capitalized version
\crefname{exa}{Example}{Examples}
\Crefname{exa}{Example}{Examples}
\AddToHook{env/lem/begin}{\crefalias{thm}{lemma}}
\AddToHook{env/cor/begin}{\crefalias{thm}{corollary}}
\AddToHook{env/prop/begin}{\crefalias{thm}{proposition}}
\AddToHook{env/exa/begin}{\crefalias{thm}{example}}
\AddToHook{env/defn/begin}{\crefalias{thm}{definition}}

\newtheoremstyle{plain-star}{}{}{\slshape}{}{\bfseries}{}{5pt plus 1pt minus 1pt}{\thmname{#1}\thmnumber{ #2.}\mdseries\thmnote{ #3}}

\theoremstyle{plain-star}
\newtheorem{thm}{Theorem}[section]
\newtheorem*{thm*}{Theorem}
\newtheorem{prop}[thm]{Proposition}
\newtheorem{lem}[thm]{Lemma}
\newtheorem{cor}[thm]{Corollary}
\theoremstyle{definition}
\newtheorem{defn}[thm]{Definition}
\newtheorem{exa}[thm]{Example}
\theoremstyle{remark}

\newtheorem*{rem*}{Remark}

\usepackage[style=numeric,doi=false,url=false,eprint=false,isbn=false,maxnames=10]{biblatex} % ,maxalphanames=3,minalphanames=3
\AtEveryBibitem{\clearfield{series}\clearfield{volume}}
\renewbibmacro{in:}{}
\setlist{listparindent=\parindent,parsep=0pt,left=\parindent} % indentation and separation between list paragraphs
\setlist[enumerate,1]{label=(\alph*)}
\setlist[enumerate,2]{label=(\roman*)}

\newcommand{\df}{\emph}
\newcommand{\R}{\mathbb{R}}
\newcommand{\C}{\mathbb{C}}

\newcommand{\N}{\mathbb{N}}

\newcommand{\ind}{\mathbf 1}
\renewcommand{\Pr}{\mathbb{P}}

\newcommand{\E}{\mathop{}\!\mathbb{E}}

\newcommand{\Var}{\operatorname{{Var}}}
\newcommand{\Cov}{\operatorname{{Cov}}}

\newcommand{\giv}{\mathbin{\vert}}

\newcommand{\cpl}{\mathrm{c}}
\newcommand{\trp}{\mathrm{T}}

\newcommand{\inp}[2]{\langle #1, #2 \rangle}
\newcommand{\nm}[1]{\lVert #1 \rVert}
\newcommand{\opnm}[1]{\nm{#1}_{\mathrm{op}}}
\newcommand{\Fnm}[1]{\nm{#1}_{\mathrm{F}}}
\newcommand{\abs}[1]{\lvert #1 \rvert}
\newcommand{\tr}{\operatorname{tr}}

\newcommand{\grad}{\nabla}

\newcommand{\blank}{\,\cdot\,}
\newcommand{\wkconv}{\Rightarrow}
\newcommand{\eqD}{\stackrel{\mathrm{D}}{=}}

\newcommand{\Gam}{\operatorname{Gamma}}
\newcommand{\Unif}{\operatorname{Uniform}}
\newcommand{\ESD}{\operatorname{ESD}}
\newcommand{\Inv}{\operatorname{Sph}}
\newcommand{\diag}{\operatorname{diag}}
\newcommand{\PowerT}{\operatorname{PowerT}}
\newcommand{\PrincipalT}{\operatorname{PrincipalT}}
\newcommand{\ReducedT}{\operatorname{ReducedT}}
\newcommand{\MP}{\operatorname{MP}}

\title[MP law for tensor powers of exchangeable unconditional vectors]{Marchenko--Pastur law for tensor powers of exchangeable unconditional vectors}
\date{\today}
\author{Feng Cheng}
\address{Department of Mathematics\\ University of Washington\\ Seattle, WA 98195\\ USA}
\email{fecheng@uw.edu}
\author{Dan Mikulincer}
\address{Department of Mathematics\\ University of Washington\\ Seattle, WA 98195\\ USA}
\email{danmiku@uw.edu}
\subjclass[2020]{60B20,	60E05}
\keywords{Marchenko--Pastur law, triangular arrays of exchangeable random variables, high-dimensional exchangeable random vectors}

\calclayout

\begin{document}

\begin{abstract}
    Given an isotropic, exchangeable, and unconditional random vector $\mathbf X$, we consider the sample covariance matrix constructed from i.i.d.\ copies of several tensor models of $\mathbf X$, such as the tensor power $\mathbf{X}^{\otimes d}$. Under appropriate moment conditions on $\mathbf X$, we show that almost surely, the empirical spectral distribution converges weakly to the Marchenko--Pastur law. This extends previous results which required the coordinates of $\mathbf{X}$ to be independent. As we demonstrate, our extension applies to many new random vectors $\mathbf{X}$ of interest.
\end{abstract}
%in each dimension $n$, we obtain an exchangeable random vector in dimension $\binom{n}{d}$, where each component is the product of $d$ distinct components of $\mathbf X$. We consider the sample covariance matrix constructed from i.i.d.\ copies of these induced exchangeable vectors. Under appropriate conditions on $\mathbf X$, it is shown that the empirical spectral distribution converges weakly to the Marchenko--Pastur distribution with probability $1$. We then extend this result to a Marchenko--Pastur law for the sample covariance matrix constructed from i.i.d.\ copies of $\mathbf X^{\otimes d}$.

\maketitle
% \tableofcontents

\section{Introduction}
\subsection{Marchenko--Pastur law for tensor powers}
The \df{empirical spectral distribution} (ESD) of a real symmetric random matrix $K \in \R^{p \times p}$ is the random measure 
$$\ESD(K)\coloneqq \frac{1}{p}\sum_{j=1}^p \delta_{\lambda_j},$$ where $\lambda_1\geq\lambda_2\geq \dotsb\geq\lambda_p$ are the $p$ eigenvalues of $K$ (counting multiplicities). In the study of random matrices, much effort has been devoted to studying potential convergence of $\ESD(K)$ to a deterministic limit in various settings.

The classical theorem of \textcite{MP_1967} provides a fundamental example. Suppose we have $m = m(p)$ i.i.d.\ random vectors $\mathbf x_p^{(1)},\dotsc,\mathbf x_p^{(m)}$ in $\R^p$, with \emph{independent coordinates}. Then, under very mild assumptions, and provided that $p/m \to c$ for some constant $c > 0$, the ESD of the sample covariance matrix\footnote{This is not the usual unbiased sample covariance matrix $\frac{1}{m-1} \sum_{k=1}^m \bigl[\mathbf{x}_p^{(k)} - \E \mathbf x_p^{(k)}\bigr]\bigl[\mathbf{x}_p^{(k)} - \E \mathbf x_p^{(k)}\bigr]^\trp$ used in statistics. However, centering, a rank one change, does not affect the limiting spectral distribution because, and therefore this is the convention in random matrix theory; see \cite[page~39]{Bai_Silverstein_2010}. % When the sample vectors have complex entries, we use the conjugate transpose instead of transpose when defining the sample covariance matrix.
} $\frac{1}{m} \sum_{k=1}^m\mathbf{x}_p^{(k)}{\mathbf{x}_p^{(k)}}^\trp$ converges weakly almost surely to a deterministic distribution \begin{equation}
        \mu_{\MP(c)} = \bigl(1 - \tfrac{1}{c}\bigr)^+ \delta_0 + \frac{\sqrt{(b-x)(x-a)}}{2\pi c x} \ind_{[a,b]}(x)\,dx, \quad \text{where }a = (1 - \sqrt{c})^2 \text{ and } b = (1 + \sqrt c)^2.\label{eq:MP}
\end{equation} % The distribution in \eqref{eq:MP} is called the Marchenko--Pastur distribution. % $\C^p$ and sample covariance matrix defined using the conjugate transpose
This distribution is called the \df{standard Marchenko--Pastur law} (MP law) with aspect ratio $c$. This law characterizes how the empirical eigenvalues distribute in the proportional regime $m\asymp p$, and it has had a prominent impact across statistics and theoretical physics. The reader may look at \cite[Theorems~3.7 and 3.10]{Bai_Silverstein_2010} for two classical proofs, using the moment method and the Stieltjes transform method.

In statistical applications, independence among the coordinates (features) of each sample is often too restrictive, since many models naturally involve structured dependence. A substantial literature has therefore sought to relax independence both within and across samples, for which we refer the reader to the recent systematic treatment in \cite{yaskov2025spectra}.

In this work, we take a further step in this direction by relaxing the independence assumptions in a structured setting involving tensor products and powers. Although this setting has been considered before, existing work largely assumes independence among the variables generating each tensor sample, whereas we allow for dependence. We first describe this setting and review the relevant known results; our main results will be presented in \cref{sec:results}.

\begin{defn} \label{defn:tensor-models}
 Let $\binom{[n]}{d}$ be the collection of subsets of $[n]$ with size $d$, and let $[n]^{d}=\{1,\dotsc,n\}^d$. These index sets have sizes $\binom{n}{d}$ and $n^d$, respectively. Further, let $\nu$ be a probability measure on $\R^n$ and let $\mathbf X = (X_1,\dotsc, X_n) \sim \nu$. We define the following random tensor models.
\begin{enumerate}[label=\arabic*.]
    \item Set $p = \binom{n}{d}$ and index the coordinates of  $\R^p$ by $\binom{[n]}{d}$. We define a random vector $\mathbf x \in \R^p$ by
    \[
         \mathbf x = \biggl(\prod_{\alpha \in i} X_\alpha\biggr)_{i \in \binom{[n]}{d}}.
    \] In this case, we say $\mathbf x$ is a \df{principal tensor generated from the base distribution $\nu$}, and denote its law by $\PrincipalT(n,d,\nu)$.
   \item Set $q = n^d$, and index the coordinates of $\R^q$ by $[n]^d$. We define another random vector $\mathbf x \in \R^q$ by \[
       \mathbf x = \biggl(\prod_{j=1}^d X_{i_j}\biggr)_{i \in [n]^d},
    \]
    where $i=(i_1,\dots,i_d)\in[n]^d.$ In this case, we say $\mathbf x$ is a (full) \df{tensor power generated from the base distribution $\nu$}, and denote its law by  $\PowerT(n,d,\nu)$.
        \end{enumerate}
    With a slight abuse of notation, we will usually write $\mathbf X$ in place of $\nu$, or omit them completely when the context is clear. When we are discussing both models simultaneously, we will just call them the tensor power models.
\end{defn}
%The choices of $f$ and $g$ used to define the coordinates of $\mathbf x$ are irrelevant because they do not change the spectrum of the sample covariance matrix. If we put the $m$ samples $\mathbf x^{(1)},\dotsc,\mathbf x^{(m)}$ into a $p$-by-$m$ data matrix $\mathbb X = \begin{bmatrix}
%    \mathbf x^{(1)} & \cdots &  \mathbf x^{(m)}
%\end{bmatrix}$, then the sample covariance matrix can be expressed as $\frac{1}{m}\mathbb X \mathbb X^\trp$. Clearly permuting the rows of the data matrix $\mathbb X$ does not change the spectrum of $\frac{1}{m} \mathbb X \mathbb X^\trp$. % \fc{we can drop this, or put in the footnote perhaps}
We briefly clarify the distinction between the two models. For $\mathbf x \sim \PrincipalT(n,d,\mathbf X)$, the coordinates of $\mathbf x$ form a basis for the space of \emph{multilinear} homogeneous polynomials of degree $d$ in the coordinates $X_1,\dotsc,X_n$ of $\mathbf X$. By contrast, $\mathbf x \sim \PowerT(n,d,\mathbf X)$ means that $\mathbf x$ has the same distribution as $\mathbf X^{\otimes d}$. In the coordinates of $\mathbf x \sim \PowerT(n,d,\mathbf X)$, the monomial $X_1^{d_1}\dotsm X_n^{d_n}$ appears $d! / \prod_{\alpha=1}^n d_{\alpha}!$ times. After removing these repetitions, we obtain a basis for the homogeneous polynomials of degree $d$ in the coordinates of $\mathbf X$.\footnote{For readers familiar with tensor algebra, this is simply saying that the tensor power $\mathbf X^{\otimes d}$ belongs to the full tensor space $(\R^n)^{\otimes d}$, and, more specifically, to the symmetric subspace $\operatorname{Sym}^{d}(\R^n)$.} It will become clear that the study of the tensor power model can be reduced to the study of the principal tensor model. We will discuss this reduction in \cref{sec:ext-tensor-power}, where we will also define the \df{reduced symmetric tensor model}, which takes care of the previously mentioned repetitions. 

% samples have inherently low-dimensional structure, which maps nonlinearly into a high-dimensional space 

In our analysis of the asymptotic ESD of sample covariance matrices, we will usually be dealing with a sequence of random vectors \[\mathbf X^{(n)} = (X_1^{(n)},\dotsc,X_n^{(n)})\in \R^n,\quad \mathbf X^{(n)} \sim \nu^{(n)}.\] When it is clear from the context, we will drop the superscript ``$(n)$'' and write $\mathbf X$, $\nu$, and the components $X_1,\dotsc,X_n$.

The principal tensor model has been studied by \textcite{BVZ_2021}, \textcite{Yaskov_2023,yaskov2025remark}, and \textcite{diaconu2026empirical}, under the assumption that $\mathbf X$ is isotropic\footnote{By isotropic, we mean $\mathbf X$ has mean zero and identity covariance. In the older literature, isotropic can also mean rotationally invariant.} with independent components. Note that in these works the principal tensor model is called the \emph{(restricted) symmetric tensor model}. In particular, the following collection of results were given in \cite[Theorem~2.2 and 2.3]{Yaskov_2023} and \cite[Proposition~2.1(b)]{yaskov2025remark}. 
\begin{thm} \label{thm:indep-tensor}
    For each $n$ and corresponding $d$, set $p = \binom{n}{d}$ and consider i.i.d.\ samples \[\mathbf x_p^{(1)},\dotsc,\mathbf x_p^{(m)} \sim \PrincipalT(n,d,\nu).\] Let $K = K_p = \frac{1}{m}\sum_{k=1}^m \mathbf x_p^{(k)}{\mathbf x_p^{(k)}}^\trp$ be the sample covariance matrix, and assume $\binom{n}{d} / m \to c$. Let ``$\wkconv$'' denote weak convergence of probability measures.

    For all $n$, let $\mathbf X^{(n)} = \mathbf X = (X_1,\dotsc,X_n)$ have independent components with zero mean, unit variance, and $\sup_n \max_{1\leq \alpha\leq n} \E X_\alpha^4 < \infty$. If $d = o(\sqrt n)$, then $\ESD(K) \wkconv \mu_{\MP(c)}$ with probability $1$.

    Furthermore, for base vectors with i.i.d.\ components, we have the following complete characterization:
    \begin{enumerate}
        \item \label{enu:non-as-iff} For all $n$, let $\mathbf X^{(n)}$ have components independent and identically distributed as the same random variable $X$, which satisfy $\E X = 0$, $\E X^2 = 1$, $\E X^4 < \infty$, and also $\Pr(\abs{X} = 1) < 1$. Then $\ESD(K) \wkconv \mu_{\MP(c)}$ with probability $1$ if and only if $d = o(\sqrt n)$.
        \item \label{enu:hypercube} If $\mathbf X \sim \Unif\{-1,1\}^n$, then $\ESD(K) \wkconv \mu_{\MP(c)}$ with probability $1$ if $\min\{d,n-d\} = o(n)$. % \fc{only if is not proven, but we will take a look}
    \end{enumerate}
\end{thm}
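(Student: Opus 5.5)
The plan is to go through the standard quadratic-form criterion for Marchenko--Pastur limits of sample covariance matrices with i.i.d.\ isotropic columns. This is due to Bai--Zhou and, in the almost-sure form, to Yaskov. Let $\mathbf x = \mathbf x_p$ be isotropic in $\R^p$ (identity covariance). Suppose that for every sequence of deterministic symmetric positive semidefinite matrices $A = A_p$ with $\opnm{A}\le 1$ we have
\[
\frac{1}{p}\bigl(\mathbf x^\trp A \mathbf x - \tr A\bigr) \to 0 \quad\text{in probability}.
\]
Then $\ESD(K)\wkconv\mu_{\MP(c)}$ almost surely whenever $p/m\to c$. Almost-sure convergence comes from the martingale/McDiarmid concentration of the Stieltjes transform, which holds because the columns are i.i.d.; no further input is needed for it. It therefore suffices to verify the displayed condition for $\mathbf x\sim\PrincipalT(n,d,\nu)$. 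Isotropy of $\mathbf x$ is immediate from independence of the $X_\alpha$ together with zero mean and unit variance: $\E \prod_{\alpha\in i}X_\alpha\prod_{\beta\in j}X_\beta = \ind_{i=j}$.

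To verify the condition, I would bound the variance, $\Var(\mathbf x^\trp A\mathbf x) = o(p^2)$, uniformly over $\opnm{A}\le1$. Write $\mathbf x^\trp A\mathbf x = \sum_{i,j} A_{ij}x_ix_j$. The covariance $\Cov(x_ix_j, x_kx_l)$ vanishes unless every $\alpha\in[n]$ appears an even number of times in the multiset $i+j+k+l$, or else appears only inside a pair that is already squared. Splitting the diagonal part $\sum_i A_{ii}(x_i^2-1)$ from the off-diagonal part, the diagonal contribution is bounded by
\[
\sum_{i,k}A_{ii}A_{kk}\Cov(x_i^2,x_k^2).
\]
Here $\Cov(x_i^2,x_k^2)\le (\max\E X^4)^{|i\cap k|}-1$ vanishes if $i\cap k=\emptyset$. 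A fixed $i$ has about $p\,d^2/n$ sets $k$ meeting it, with weights controlled by $C^{d}$ only when the overlap is large. Summing over overlap sizes $s$ gives roughly $p^2\sum_{s\ge1}\binom{d}{s}^2 C^s/\binom{n}{s}\lesssim p^2 \cdot Cd^2/n = o(p^2)$ precisely when $d=o(\sqrt n)$. This is where the condition $d=o(\sqrt n)$ enters. For the off-diagonal part I would use $\Fnm{A}^2\le p$ together with a decomposition of $\mathbf x$ along chaos levels: conditioning on overlaps gives a Hoeffding/Efron--Stein type expansion, and the variance is controlled by $\tr(A^2)\cdot$(hypercontractivity-free overlap factor). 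The goal is a bound of the form $\Var\le 2\Fnm{A}^2 + p^2\cdot O(d^2/n)$. Computing the joint moments of four multilinear monomials with overlap patterns, and organizing the sum so that only $\opnm{A}\le1$ and $\Fnm{A}^2\le p$ are used, is the main obstacle. My first attempt would be to write $\mathbf x^\trp A\mathbf x$ via Efron--Stein over the $n$ coordinates $X_\alpha$: each coordinate's influence involves only the roughly $p\,d/n$ entries containing $\alpha$, giving $\Var\lesssim \sum_\alpha \E(\partial_\alpha\text{-part})^2\lesssim n\cdot (d/n)^2 p^2 C = p^2 Cd^2/n$.

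For part (a), sufficiency is the above. For necessity when $d\ne o(\sqrt n)$, I would take $A=I$. Then $\frac1p\nm{\mathbf x}^2 = \binom nd^{-1}e_d(X_1^2,\dots,X_n^2)$, and for i.i.d.\ $X_\alpha^2$ with $\Var X^2>0$ this symmetric mean has variance bounded below by a constant when $d^2/n\not\to0$, since its first Hoeffding projection has variance $\asymp d^2/n\cdot\Var X^2$. Non-concentration of the column norms forces the ESD limit to differ from MP; one sees this through the second moment $\frac1p\tr K^2$, which picks up $\frac{1}{m p}\sum_k\nm{\mathbf x^{(k)}}^4$ and exceeds the MP value $1+c$. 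For part (b), $X_\alpha^2=1$, so the diagonal part vanishes identically. Moreover $\mathbf x$ is invariant under complementation up to a global sign (a product of all coordinates), so we may assume $d\le n/2$. Off-diagonal terms $x_ix_j=\prod_{\alpha\in i\triangle j}X_\alpha$ are Walsh characters, and the Efron--Stein bound becomes exact Fourier orthogonality: $\Var=\sum_{S}\bigl(\sum_{i\triangle j=S}A_{ij}\bigr)^2$. This needs only $d/n\to0$, because the influence of $\alpha$ scales like $d/n$ rather than $d^2/n$.
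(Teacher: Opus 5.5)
\textbf{There is a genuine gap: the off-diagonal variance bound, the heart of the sufficiency proof, is not established, and the necessity argument in (a) does not work.}

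The paper only quotes this theorem; it does not prove it. However, its proof of \cref{thm:main} re-runs Yaskov's sufficiency argument, and your architecture is the same: the criterion of \cref{thm:MP}, a variance bound, and a diagonal/off-diagonal split. Your diagonal estimate is essentially Case~1 there.

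The off-diagonal part is the one you leave as ``the main obstacle'', and the Efron--Stein heuristic does not close it as stated. The $\alpha$-part of $f=\sum_{i\neq j}a_{ij}x_ix_j$ is dominated by the cross pairs $i\ni\alpha$, $j\not\ni\alpha$. There are about $\frac{pd}{n}\cdot p$ such pairs, not $(\frac{pd}{n})^2$. The obvious Cauchy--Schwarz bound on that part, of order $\frac{pd}{n}\cdot p$, only sums over $\alpha$ to $O(p^2d)$. Efron--Stein is not hopeless here, since it loses at most a factor $2d$, but it does not bypass the computation. Getting $o(p^2)$ requires using the vanishing of most $\E(x_ix_jx_kx_\ell)$, i.e.\ exactly the overlap count you postpone.

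In the paper this is done as follows:
\begin{enumerate}
\item Write the off-diagonal form as $\sum_{r<d}\Delta_r$ with $r=\abs{i\cap j}$.
\item Use $\nm{\sum_r\Delta_r}_{L^2(\Pr)}\le\sum_r\nm{\Delta_r}_{L^2(\Pr)}$ and $a_{ij}a_{k\ell}\le\frac12(a_{ij}^2+a_{k\ell}^2)$.
\item Count the partners $(k,\ell)$ of a fixed $(i,j)$. Nonvanishing forces $k\triangle\ell=i\triangle j$ and $k\cap\ell\cap(i\triangle j)=\emptyset$. So there are at most $\binom{2d-2r}{d-r}\binom{r}{s}\binom{n-r}{r-s}$ partners with $s$ indices of multiplicity four, each with weight at most $C^s$.
\item This yields $\Var=O\bigl(\Fnm{A}^2\,p\,\frac{d}{n}(1+\frac{Cd}{n})^d\bigr)$.
\end{enumerate}

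Moreover, your stated vanishing criterion is not the right one when the $X_\alpha$ are not symmetric, which the theorem allows. For $i\neq j$ and $k\neq\ell$, $\E(x_ix_jx_kx_\ell)=0$ only when some index has multiplicity exactly one, and multiplicity three does occur. For $d=2$, $i=k=\{1,2\}$, $j=\{1,3\}$, $\ell=\{2,3\}$ one gets $\E X_1^3\,\E X_2^3\neq0$ in general. Such patterns must also be counted. They can be controlled, because a triple index lying in $k\cap\ell$ uses up one of the free choices from $[n]$. The paper notes that this is precisely the extra difficulty in Yaskov's proof, which its own Case~2 avoids through unconditionality.

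The necessity half of (a) is also not proved. Hoeffding's bound $\Var(U_n)\ge\frac{d^2}{n}\Var(X^2)$ is only a heuristic, and the paper presents it as such. A variance bounded below does not prevent $U_n=\nm{\mathbf x}^2/p\to1$ in probability. The $\frac1p\tr K^2$ argument fails outright: weak convergence of $\ESD(K)$ gives no control of its second moment, only $\liminf_p\frac1p\tr K^2\ge1+c$. So an excess second moment is compatible with $\ESD(K)\wkconv\mu_{\MP(c)}$.

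What is needed has two parts:
\begin{enumerate}
\item Yaskov's necessary condition \cite[Theorem~2.1]{Yaskov_2016_necessary_sufficient}: for isotropic columns, almost sure convergence to $\mu_{\MP(c)}$ forces $\nm{\mathbf x}^2/p\to1$ in probability.
\item A genuinely distributional statement that this $U$-statistic does not tend to $1$ in probability when $d\neq o(\sqrt n)$. This is the weak law in \cite[Theorems~2.3 and 2.4]{Yaskov_2023}.
\end{enumerate}

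Your treatment of (b), by contrast, is sound, and it differs from the paper, which simply cites Yaskov's remark. After the complement reduction, the diagonal part vanishes identically. Cauchy--Schwarz on each Walsh level then gives $\Var(\mathbf x^\trp A\mathbf x)\le\Fnm{A}^2\max_{t\ge1}\binom{2t}{t}\binom{n-2t}{d-t}\le\Fnm{A}^2\,p\,\frac{4d}{n-d+1}$ for large $n$. This is $o(p^2)$ when $d=o(n)$.
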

Note that the uniform measure on the hypercube $\{-1,1\}^n$ is precisely the case when $\E X_1 = 0$ and $\abs{X_1} = 1$ a.s. Therefore, the above result settles the case when $\nu$ is a product of independent and identical distributions on $\R$.\footnote{In fact, \textcite{yaskov2025remark} showed that part~\ref{enu:hypercube} holds even for complex-valued $\mathbf X \sim \nu$ with i.i.d.\ components satisfying $\E X_1 = 0$ and $\abs{X_1} = 1$ almost surely. But we will focus only on the real case here.} We also remark that the fourth moment condition in the above theorem may be relaxed; see \cite[Theorem~2.3]{Yaskov_2023} for details. 

Furthermore, \textcite[Section~3]{Yaskov_2023} considered the sample covariance matrix $K$ built from $m$ independent samples distributed according to $\PowerT(n,d,\nu)$, again assuming that $\nu$ is an isotropic product measure. Roughly speaking, he showed that the limiting ESD for the \emph{nonzero} eigenvalues of $K$ can still be expressed by the MP law when $d = o(\sqrt n)$.

% For base vector $\mathbf X$ with unbounded norm, the tail of the tensors become much heavier as $d$ increases. \fc{consider dropping}
We observe that when $\mathbf X$ has independent coordinates, the coordinates of the principal tensor and the tensor power induced by $\mathbf X$ are independent only when $d = 1$. As $d$ increases, the coordinates will become more correlated, especially if we allow $d$ to scale with $n$. The random tensor models are thus a structured but highly correlated model.

\Cref{thm:indep-tensor} shows that the dependence created by taking tensor powers does not, by itself, prevent convergence to the MP law. The key point, however, is that the base vector $\mathbf X$ has independent coordinates, so all dependencies arise from the algebraic relations among the coordinates of the tensor power. It is therefore natural to ask to what extent the independence assumption on $\mathbf X$ can be relaxed while preserving the same limiting behavior. This question is also relevant in applications, where models allowing dependence among the coordinates of the underlying data are often more realistic than the product models. 

In this paper, we address this question by studying the MP law for tensor models generated from base vectors with dependent components. We focus on exchangeable and unconditional distributions, which, on the one hand, allow for various interesting forms of dependence and, on the other hand, preserve enough symmetry to reduce the analysis to a tractable collection of mixed moments. In terms of these moments, our main result gives a general criterion for the sample covariance spectrum of i.i.d.\ principal tensors to converge to the MP law. We derive more readily verifiable sufficient conditions in several settings and apply them to natural classes of dependent distributions. Moreover, our analysis extends from principal tensors to the full tensor powers $\mathbf X^{\otimes d}$, which appears prominently in applications. In many cases, convergence holds for $d=o(\sqrt{n})$, matching the optimal range known for general i.i.d.\ base vectors.

%The aim of this paper is to further relax the model and remove the assumption that the base vector $\mathbf X$ has independent coordinates. We instead assume that the features of the base vector are invariant under permutations and sign changes to the coordinates. 
% When taking the tensor power of a \emph{single} vector it makes sense to consider vectors without independence between coordinates. 
%In addition, our exchangeable setup still allows us to consider limiting ESD for the full tensor power, which appears prominently in applications. We believe our analysis also contribute to the study of tensor powers, random chaos, and high-dimensional exchangeable and unconditional distributions.

\subsection{Acknowledgment} F.C.\ was supported in part by NSF grant DMS-1954059 and the McFarlan Fellowship from the Department of Mathematics at the University of Washington. D.M.\ was partially supported by the Brian and Tiffinie Pang Faculty Fellowship.

\section{Main results} \label{sec:results}
\subsection{A general condition for exchangeable and unconditional base vectors} \label{sec:general-cond-exchange-uncond-base}
As discussed above, we focus on tensor models generated from base vectors that are both exchangeable and unconditional. We begin by defining these two symmetry properties and then state our general condition for convergence to the Marchenko--Pastur law. The proof is given in \cref{sec:concentration-proof-of-main}. %In \cref{sec:concentration-proof-of-main}, we will adapt the proof of \cref{thm:indep-tensor} for i.i.d.\ base vectors in \cite{Yaskov_2023} to the case where the base vector $\mathbf X$ is exchangeable and unconditional.

We say that $\mathbf X$ is \df{exchangeable} if for any permutation $\sigma$ on $[n]$, it holds that \[
    (X_1,\dotsc,X_n) \eqD (X_{\sigma(1)},\dotsc,X_{\sigma(n)}).
\] Moreover, we say that $\mathbf X$ is \df{unconditional} if for independent Rademacher random signs $\{\varepsilon_i\}_{i=1}^n$ (independent of $\mathbf X$), we have \[
    (X_1,\dotsc,X_n) \eqD (\varepsilon_1X_1,\dotsc,\varepsilon_nX_n).
\] Therefore, assuming $\mathbf X$ is exchangeable and unconditional is simply saying that the distribution of $\mathbf X$ is invariant under any permutations and sign changes to its coordinates.

% We usually implicitly assume the aspect ratio assumption $\binom{n}{d} / m \to c$ when stating that the almost sure weak convergence $\ESD(K_{\PrincipalT}) \wkconv \mu_{\MP}$.

\begin{thm} \label{thm:main}
    Define $p = \binom{n}{d}$. Let $m=m(p) \in \N$ be the sample size, and $\mathbf X$ be an exchangeable and unconditional base vector in $\R^n$. Suppose that $p / m \to c$ for some fixed constant $c > 0$, and that for $1 \leq k\leq m$, the samples $\mathbf x^{(k)} = \mathbf x^{(k)}_p$ are i.i.d.\ according to $\PrincipalT(n,d,\mathbf X)$.

    Let $L = L_n$ and $d = d_n$ be two sequences that satisfy $Ld^2 / n \to 0$. Suppose \begin{enumerate}[leftmargin=*,label=(\Alph*)]
        \item $\E(X_1^2\dotsm X_d^2) \to 1$; \label{cond:expec}
        \item $\E(X_1^2\dotsm X_{2d}^2) - \bigl(\E X_1^2\dotsm X_d^2\bigr)^2 \to 0$; \label{cond:var}
        \item \label{cond:2nd-4th-compare} for all $n$ large enough and for all $1\leq r\leq d$, \[\E(X_1^4\dotsm X_r^4 X_{r+1}^2\dotsm X_{2d - r}^2) \leq L^r.\]
    \end{enumerate} Then for the sample covariance matrix $K_{\PrincipalT}= \frac{1}{m}\sum_k \mathbf x^{(k)}{\mathbf x^{(k)}}^\trp$, with probability $1$ we have \[\mathrm{ESD}(K_{\PrincipalT}) \wkconv \mu_{\MP(c)},\] where $\mu_{\mathrm{MP(c)}}$ is defined in \eqref{eq:MP}.
\end{thm}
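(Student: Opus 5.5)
The plan is to go through a general criterion for MP convergence of sample covariance matrices built from i.i.d.\ isotropic-type vectors: the quadratic-form concentration criterion of Bai--Zhou / Yaskov. It states that if $\mathbf x_p$ has covariance $\Sigma_p$ with $\opnm{\Sigma_p}$ bounded and $\ESD(\Sigma_p)\wkconv\delta_1$, and if for every sequence of deterministic symmetric $A_p$ with $\opnm{A_p}\le 1$
\[
\frac{1}{p}\Bigl(\mathbf x_p^\trp A_p \mathbf x_p - \tr(A_p\Sigma_p)\Bigr) \to 0 \quad\text{in probability},
\]
then $\ESD(K)\wkconv\mu_{\MP(c)}$ almost surely. (In Yaskov's version it suffices to test $A_p=(K^{(k)}-zI)^{-1}$-type matrices; one can also use a variance version, $\Var(\mathbf x^\trp A\mathbf x)=o(p^2)$.) This is exactly the route used in \cite{Yaskov_2023} for \cref{thm:indep-tensor}. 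The first step is the covariance. By unconditionality, for $i\neq j$ in $\binom{[n]}{d}$ the product $\prod_{\alpha\in i}X_\alpha\prod_{\beta\in j}X_\beta$ contains some coordinate to an odd power, so it has mean zero. Hence $\Sigma_p = \E(X_1^2\dotsm X_d^2)\, I_p$ by exchangeability, and condition \ref{cond:expec} gives $\Sigma_p\to I$ in operator norm.

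The core step is to bound $\Var(\mathbf x^\trp A\mathbf x)$ for $\opnm{A}\le1$. Split $A$ into its diagonal and off-diagonal parts. For the diagonal part, $\sum_i A_{ii}(x_i^2-\E x_i^2)$ has variance $\sum_{i,j}A_{ii}A_{jj}\Cov(x_i^2,x_j^2)$. By exchangeability, $\Cov(x_i^2,x_j^2)$ depends only on $r=\abs{i\cap j}$. It equals $M_r-M_0^2$, where $M_r=\E(X_1^4\dotsm X_r^4X_{r+1}^2\dotsm X_{2d-r}^2)$ and $M_0=\E(X_1^2\dotsm X_d^2)$. The number of pairs with overlap $r$ is $p\binom{d}{r}\binom{n-d}{d-r}\le p\,(d^2/n)^r/r!\cdot\binom{n}{d}$ roughly. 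So the $r\ge1$ contribution is at most $p^2\sum_{r\ge1}(Ld^2/n)^r/r!\,(1+o(1))$, which is $o(p^2)$ by \ref{cond:2nd-4th-compare} and $Ld^2/n\to0$. The $r=0$ contribution is at most $p^2\abs{M_0'-M_0^2}=o(p^2)$ by \ref{cond:var}, where $M_0'=\E(X_1^2\dotsm X_{2d}^2)$. For the off-diagonal part, unconditionality kills every term $\E x_ix_jx_kx_l$ unless the multiset symmetric differences match, which forces $\{i,j\}=\{k,l\}$: $i\triangle j = k\triangle l$ and the union structure pairs up. More carefully, we need $i\triangle j\triangle k\triangle l=\emptyset$ as a mod-2 set. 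In the independent case this yields additional terms, controlled via $\Fnm{A}^2\le p$ plus a Cauchy--Schwarz over the overlap structure. So I would write $\E(\sum_{i\ne j}A_{ij}x_ix_j)^2$ as a sum over quadruples with even-parity union, bound each moment by Hölder/AM--GM by some $M_r$ with $r$ equal to the multiplicity-two overlap, and count configurations.

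The main obstacle is precisely this off-diagonal counting. Quadruples $(i,j,k,l)$ with $i\triangle j=k\triangle l$ but $\{k,l\}\neq\{i,j\}$ are numerous, and the entries of $A$ are arbitrary up to $\opnm{A}\le1$. The first thing I would try is Yaskov's trick: express the contribution as $\sum_{S}\nm{B_S}^2$ for suitable matrices $B_S$ obtained by grouping indices by $i\cap j$, and use $\opnm{A}\le1$ to bound each group sum. The moment weights are handled by \ref{cond:2nd-4th-compare}: a coordinate shared by $r$ of the sets appears to the fourth power, costing $L^r$. Each extra shared coordinate also costs a combinatorial factor $d^2/n$, so the total is again $O(p^2\sum_{r\ge1}(Ld^2/n)^r)+o(p^2)$. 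Once $\Var(\mathbf x^\trp A\mathbf x)=o(p^2)$ uniformly in $\opnm A\le1$, Chebyshev gives the concentration hypothesis, and the general criterion yields almost sure convergence of $\ESD(K_{\PrincipalT})$ to $\mu_{\MP(c)}$. The almost sure upgrade comes from the standard martingale/McDiarmid concentration of the Stieltjes transform across the i.i.d.\ samples.
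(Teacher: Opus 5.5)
\textbf{Verdict: genuine gap in the off-diagonal estimate.}

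Your covariance computation and your diagonal estimate essentially reproduce the paper's Case~1. One small fix there: for $r\ge1$ you bounded $\Cov(x_i^2,x_j^2)$ only from above by $M_r$. For a symmetric, non-PSD $A$ you need $\abs{M_r-M_0^2}\le M_r+M_0^2$. The extra $M_0^2$ part is $o(p^2)$, because there are $p\bigl(p-\binom{n-d}{d}\bigr)=o(p^2)$ overlapping pairs. This uses $d=o(\sqrt n)$, which holds because (C) with $r=1$ and Schur-convexity give $L\ge M_0'\to1$. Alternatively, restrict to PSD $A$, which is all the criterion needs.

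The genuine gap is the off-diagonal part, which is the heart of the theorem. You call it ``the main obstacle'', propose an undefined grouping $\sum_S\nm{B_S}^2$, and then assert the bound $O(p^2\sum_{r\ge1}(Ld^2/n)^r)+o(p^2)$ without deriving it. Your heuristic also misplaces where the smallness comes from. Quadruples with no coordinate common to all of $i,j,k,\ell$ have moment $M_0'=\E(X_1^2\dotsm X_{2d}^2)\to1$. So they are not suppressed by $L$ or by ``extra shared coordinates'' at all; their contribution must be controlled purely by counting.

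The missing idea is that nothing about $A$ beyond $\Fnm{A}^2\le p$ is needed. By AM--GM and symmetry,
\[
\E\Bigl(\sum_{i\ne j}a_{ij}x_ix_j\Bigr)^2\le\sum_{i\ne j}a_{ij}^2\sum_{k\ne\ell}\abs{\E x_ix_jx_kx_\ell},
\]
so it suffices to show the inner sum is $o(p)$ uniformly in $(i,j)$.

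\begin{itemize}
\item Fix $(i,j)$ with $\abs{i\cap j}=r\le d-1$; this is exactly where $i\ne j$ enters.
\item Parity forces $k\triangle\ell=i\triangle j$ and $(k\cap\ell)\cap(i\triangle j)=\emptyset$. So only the $r$ elements of $k\cap\ell$ are free in $[n]$. The remaining $2d-2r$ are pinned to $i\triangle j$, split in $\binom{2d-2r}{d-r}$ ways.
\item Let $s=\abs{i\cap j\cap k\cap\ell}$. The moment is at most $L^sM_0'$, after replacing $L$ by $2L$ (possible since $M_0'\to1$).
\item There are at most $\binom{r}{s}\binom{n-r}{r-s}\binom{2d-2r}{d-r}$ such $(k,\ell)$.
\end{itemize}

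By \cref{lem:binom-ineq}, $\sum_s L^s\binom rs\binom{n-r}{r-s}\le\binom nr(1+\frac{Lr}{n})^r$. Also $\binom{2d-2r}{d-r}\binom nr\le(\frac{8d}{n})^{d-r}\binom nd$ for $n\ge2d$. Since $d-r\ge1$, the inner sum is at most $8M_0'\,p\,\frac dn(1+\frac{Ld}n)^d=o(p)$ by \cref{lem:sqrt-n-lim}. Hence
\[
\Var(\mathbf x^\trp A_{\mathrm{off}}\mathbf x)\le 8M_0'\,p^2\,\frac dn\Bigl(1+\frac{Ld}n\Bigr)^d=o(p^2).
\]

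The paper organizes the same count by splitting into $\Delta_r$ according to $r=\abs{i\cap j}$, then using the triangle inequality in $L^2$ and Cauchy--Schwarz over $r$. In fact the cross terms between different $r$ vanish by unconditionality, since $k\triangle\ell=i\triangle j$ forces $\abs{k\cap\ell}=\abs{i\cap j}$, so that step can be skipped.
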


In light of \cref{thm:main}, we view the assumptions of having $\mathbf X$ exchangeable and unconditional as a convenient condition that allows for a tractable analysis of $\PrincipalT(n,d,\mathbf X)$. For each multi-index of exponents $\alpha=(\alpha_1,\dotsc,\alpha_j)$, exchangeability allows us to look only at the mixed moment in the first $j$ coordinates \[\E(X_1^{\alpha_1} X_2^{\alpha_2} \dotsm X_j^{\alpha_j}).\] 
In addition, because of the unconditional assumption, we no longer need to consider the case when odd numbers appear in the exponents of the mixed moment above. This was a major difficulty in the original proof of \cref{thm:indep-tensor} in \cite{Yaskov_2023}, which was resolved thanks to the independence between the coordinates that \citeauthor{Yaskov_2023} assumed.

\begin{rem*}
In the exchangeable and unconditional setting, we believe the conditions in \cref{thm:main} should be optimal. Take $d = 1$, so $p = n$ and $\mathbf x = \mathbf X$ is simply an exchangeable and unconditional vector. We assume $\E X_1^2 \to 1$ and $\E X_1^4 = o(p)$, which correspond to conditions~\ref{cond:expec}\ref{cond:2nd-4th-compare}. This setting has actually been studied before: \cite[Proposition~3.8]{yaskov2025spectra} and \cite[Proposition~2.6]{Adamczak_2013} state that (assuming the stronger $\sup_p\E X_1^4 < \infty$) a largely necessary condition for the sample covariance spectrum to converge to MP is for \begin{equation}\frac{\nm{\mathbf x}_2^2 - \E \nm{\mathbf x}_2^2}{p}\to 0 \quad\text{in probability}.\footnote{We always write $\nm{\mathbf x}_k$ for the $\ell^k$ norm of the vector $\mathbf x$, and $\nm{X}_{L^q(\Pr)} = (\E \abs{X}^q)^{1/q}$ for the random variable $X$.} \label{eq:exchangeable-uncond-sufficient}\end{equation} Most successful attempts at relaxing the within-sample independence for explicit distributions is to prove the weaker $L^2$ convergence. By Chebyshev's inequality, we wish \begin{align*}
    \Var(\nm{\mathbf x}_2^2) & = \E\bigl(X_1^2+\dotsm +X_p^2\bigr)^2 - \bigl(p \E X_1^2\bigr)^2\\
    & = p \E X_1^4 + p(p-1)\E (X_1^2X_2^2) - p^2 \bigl(\E X_1^2\bigr)^2\\
    & = p \bigl[\E X_1^4 - \bigl(\E X_1^2\bigr)^2\bigr] + (p^2 - p)\bigl[\E(X_1^2X_2^2) -  \bigl(\E X_1^2\bigr)^2\bigr]
\end{align*}
to be $o(p^2)$. This requires $\E(X_1^2X_2^2) - \bigl(\E X_1^2\bigr)^2\to 0$, which is precisely \cref{cond:var}. % Note that $\E X_1^4 < \infty$ is precisely \cref{cond:2nd-4th-compare}, and these justify our conditions are not likely to be further relaxed.
% \dm{Say something like this seems like a hard condition to check in practice, but the point is that exchangeability makes it tractable. Below we separate into the two regimes and explain how to understand it in each one.}
\end{rem*}
The three conditions of \cref{thm:main} should really be seen as the convergence of triangular arrays of exchangeable random variables. They are hard to check in practice except in a few cases, such as when the base vector $\mathbf X$ obeys certain symmetries with respect to the $\ell^k$ norm, or when $\mathbf X$ follows a mixture of product measures, which we will see soon. But again thanks to the exchangeability assumption, we can make these conditions manageable for more general distributions.

Below we split into the case when $d$ is fixed (along with the $L$ in \cref{thm:main}) and the case when $d$ is allowed to grow with $n$. In each case, we find sufficient conditions, which are easy to verify, and which imply the conditions in \cref{thm:main}. Throughout the remainder of this section, unless stated otherwise, we let $p=\binom{n}{d}$, assume that $p/m\to c$, and let $K_{\PrincipalT}$ be the sample covariance matrix formed from $m$ i.i.d.\ samples from $\PrincipalT(n,d,\mathbf X)$.

% Since de~Finetti's theorem only applies to a single infinite sequence of exchangeable random variables, we certainly cannot reduce a finite exchangeable random vector into a mixture of i.i.d.\ distributions. However, approximation by mixture distributions is still possible for each finite exchangeable vector, as discussed in \cite{Diaconis_Freedman_1980}. It turns out we can relax our conditions slightly to obtain necessary conditions on the norm $\mathbf X^{(n)}$, which has been subject to intense study in high-dimensional probability.

\subsection{Verifying Theorem \ref{thm:main} for fixed \texorpdfstring{$d$}{d} and \texorpdfstring{$L$}{L}}
In \cref{sec:finite-d} we will focus on the case when $d$ is a fixed number, independent of $n$. Our condition earlier in \cref{thm:main}\ref{cond:2nd-4th-compare} appears to be involved, but thanks to the exchangeability assumption, in the case where $L$ and $d$ are both finite, this condition is completely equivalent to saying $\sup_n \E(X_1^4\dotsm X_d^4) < \infty$; see \cref{lem:relax-4th-moment}. % \fc{and from there we derive 2.2(c), move before}
From there we derive a set of easier conditions sufficient for applying \cref{thm:main}. % \fc{I need to let $L$ be finite as well, this is an important change from previous drafts}

%With the only exception \cref{thm:tensor-power}, we will always implicitly assume the setup in the first paragraph of \cref{thm:main} in our results for the principal tensor sample covariance matrix $K_{\PrincipalT}$. In particular, this includes the aspect ratio assumption $\binom{n}{d} / m \to c$.

\begin{thm} \label{thm:finite-case}
    Let $d$ be fixed, and let $\mathbf X = (X_1,\dotsc,X_n)$ be the exchangeable unconditional base vector. If 
    \begin{enumerate}
        \item $\E X_1^2 \to 1$, 
        \item \label{cond:asymp-uncorr} $\E(X_1^2X_2^2) - \E(X_1^2)\E(X_2^2) \to 0$, and
        \item \label{cond:fixed-d-integrability} there exists some $\epsilon > 0$ such that $\sup_n \E(\abs{X_1}^{4d + \epsilon}) < \infty$,%\footnote{I have tried, but don't think we can improve to uniform integrability}
    \end{enumerate} then the three conditions in \cref{thm:main} are satisfied, and we have with probability $1$ that \[\ESD(K_{\PrincipalT}) \wkconv \mu_{\MP(c)}.\]
\end{thm}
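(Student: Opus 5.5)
The plan is to check conditions~\ref{cond:expec}, \ref{cond:var} and \ref{cond:2nd-4th-compare} of \cref{thm:main} with $L$ equal to a constant, and then invoke that theorem. Since $d$ is fixed, any constant $L$ gives $Ld^2/n \to 0$. Throughout, write $Y_\alpha = X_\alpha^2$ and $M = \sup_n \E\abs{X_1}^{4d+\epsilon} < \infty$.

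\emph{Condition~\ref{cond:2nd-4th-compare}.} Fix $1\le r\le d$. The monomial $X_1^4\dotsm X_r^4X_{r+1}^2\dotsm X_{2d-r}^2$ has total degree $4r + 2(2d-2r) = 4d$. Apply the generalized H\"older inequality to the factors $\abs{X_\alpha}^{a_\alpha}$ with exponents $4d/a_\alpha$; these satisfy $\sum_\alpha a_\alpha/(4d) = 1$. By exchangeability every coordinate has the law of $X_1$, so the bound is $\prod_\alpha (\E\abs{X_1}^{4d})^{a_\alpha/4d} = \E\abs{X_1}^{4d} \le 1 + M$. Setting $L := 1+M \ge 1$ gives $L^r \ge L \ge \E\abs{X_1}^{4d}$ for every $r\ge 1$, which is \ref{cond:2nd-4th-compare}.

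\emph{Conditions~\ref{cond:expec} and~\ref{cond:var}.} Both follow once we show $\E(Y_1\dotsm Y_k)\to 1$ for each fixed $k\le 2d$. The idea is a weak law of large numbers for the exchangeable array. Let $S_n = \frac1n\sum_{\alpha=1}^n Y_\alpha$. By exchangeability,
\[
\Var(S_n) = \tfrac1n \Var(Y_1) + \bigl(1-\tfrac1n\bigr)\Cov(Y_1,Y_2).
\]
The first term tends to $0$ because $\E Y_1^2 = \E X_1^4$ is bounded (as $4d+\epsilon\ge 4$). The second tends to $0$ by hypothesis~\ref{cond:asymp-uncorr}. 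Together with $\E S_n = \E X_1^2 \to 1$, this gives $S_n \to 1$ in $L^2$, hence in probability.

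To pass to $k$-th powers I need uniform integrability. Choose $\delta>0$ with $2k(1+\delta) \le 4d+\epsilon$, which is possible for every $k\le 2d$. Minkowski's inequality gives $\nm{S_n}_{L^{k(1+\delta)}(\Pr)} \le \nm{Y_1}_{L^{k(1+\delta)}(\Pr)}$, which is uniformly bounded. So $\{S_n^k\}$ is uniformly integrable, and therefore $\E S_n^k \to 1$.

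Finally, expand $S_n^k = n^{-k}\sum_{(\alpha_1,\dotsc,\alpha_k)\in[n]^k} Y_{\alpha_1}\dotsm Y_{\alpha_k}$.
\begin{itemize}
\item Tuples with distinct entries each contribute $\E(Y_1\dotsm Y_k)$ by exchangeability. There are $n(n-1)\dotsm(n-k+1)$ of them, a fraction $1-O(1/n)$ of all tuples.
\item The remaining $O(n^{k-1})$ tuples give monomials in the $X$'s of total degree $2k\le 4d$. By the same H\"older argument as above, each such expectation is bounded by $\E\abs{X_1}^{2k} \le 1+M$.
\end{itemize}
Hence $\E(Y_1\dotsm Y_k) = \E S_n^k + O(1/n) \to 1$.

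Taking $k=d$ gives~\ref{cond:expec}. Taking $k=2d$ and $k=d$ gives~\ref{cond:var}, since then $\E(Y_1\dotsm Y_{2d}) - (\E Y_1\dotsm Y_d)^2 \to 1-1 = 0$. With all three conditions verified, \cref{thm:main} yields $\ESD(K_{\PrincipalT}) \wkconv \mu_{\MP(c)}$ almost surely.

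The main obstacle is the uniform integrability step. It is exactly where the extra $\epsilon$ in hypothesis~\ref{cond:fixed-d-integrability} is used: the case $k=2d$ requires moments of $X_1$ slightly beyond order $4d$. I would handle it via the Minkowski bound above rather than attempting any finer decoupling.
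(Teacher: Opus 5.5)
Your proposal is correct and is essentially the paper's proof with the mixture language stripped away. The paper's random measure $Y_n$ (law $w_n$ from \cref{lem:exchange-2nd-mmt-approx}) satisfies $\int x^2\,dY_n \eqD \frac1n\nm{\mathbf X}_2^2 = S_n$, so its step ``$f(Y_n)\to 1$ in probability, with $f(Y_n)^{2d}$ uniformly integrable via a $(2d+\epsilon/2)$-th moment bound'' is exactly your WLLN-plus-Minkowski step, and \cref{lem:suff-cond-prod-approx} is your distinct/non-distinct expansion; the only cosmetic difference is that you bound mixed moments by H\"older where the paper uses the Schur-convexity of \cref{lem:exchangeable-Schur-cvx}.
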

When $d$ is finite, we also point out a considerable generalization of \cref{thm:main} is possible. In this generalization we weight each independent base vector $\mathbf X$ by an independent random variable, and obtain an \emph{anisotropic Marchenko--Pastur law}.
\begin{thm} \label{thm:weighted-sample}
    Let $d$ be fixed. Suppose the base vector $\mathbf X = (X_1,\dotsc, X_n)$ \begin{enumerate}[label=\arabic*.]
        \item has independent isotropic components with $\sup_n \max_{1\leq \alpha \leq n} \E X_\alpha^4 < \infty$ (as in \cref{thm:indep-tensor}); or 
        \item is an exchangeable and unconditional vector that satisfies the three conditions in \cref{thm:main}.
    \end{enumerate}
    Assume in addition that we have a real-valued random variable $R$, independent of $\mathbf X$.
    
    Consider $\mathbf x^{(1)},\dotsc,\mathbf x^{(m)} \in \R^p$ sampled independently from $\PrincipalT(n,d, R\cdot \mathbf X)$, and suppose $p / m \to c$ for some fixed $c > 0$. Then with probability $1$, the sample covariance matrix $K = \frac{1}{m} \sum_{k=1}^m \mathbf x^{(k)} {\mathbf x^{(k)}}^\trp$ converges weakly to a weighted MP law $\mu$, which is uniquely determined by its Stieltjes transform \[
      s(z) = \frac{1}{\int_0^\infty \frac{x}{1 + c x s(z)}\,d\tau(x) - z},
    \] where $R^{2d} \sim \tau$.
\end{thm}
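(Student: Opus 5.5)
The plan is to reduce \cref{thm:weighted-sample} to the general theory of sample covariance matrices with weighted samples, that is, the anisotropic Marchenko--Pastur law of Bai--Zhou or Yaskov type. Write $\mathbf x^{(k)} = R_k^d\,\mathbf y^{(k)}$, where $\mathbf y^{(k)}\sim\PrincipalT(n,d,\mathbf X)$ are i.i.d.\ and the $R_k$ are i.i.d.\ copies of $R$, independent of everything else. This works because each coordinate of the principal tensor of $R\mathbf X$ is a product of $d$ coordinates, so it picks up exactly the factor $R^d$. Then
\[ K=\frac1m\sum_{k=1}^m R_k^{2d}\,\mathbf y^{(k)}{\mathbf y^{(k)}}^\trp = \frac1m Y T Y^\trp, \qquad T=\diag(R_1^{2d},\dotsc,R_m^{2d}). \]
By the strong law of large numbers (Glivenko--Cantelli), $\ESD(T)\wkconv\tau$ almost surely. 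The target Stieltjes equation is exactly the Marchenko--Pastur--Silverstein equation for $\frac1m YTY^\trp$ with population covariance close to the identity. Conditioning on $(R_k)$, it therefore suffices to prove the following: for a fixed sequence of weights $t_k\ge0$ with $\frac1m\sum_k\delta_{t_k}\wkconv\tau$, we have $\ESD\bigl(\frac1m\sum_k t_k\mathbf y^{(k)}{\mathbf y^{(k)}}^\trp\bigr)\wkconv\mu$ almost surely.

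The key input is the quadratic form criterion (Bai--Zhou, or Yaskov's necessary-and-sufficient version). Suppose $\mathbf y$ is isotropic up to $o(1)$ and satisfies
\[ \frac1p\bigl(\mathbf y^\trp A\mathbf y-\tr A\bigr)\to 0 \quad\text{in probability, uniformly over } \opnm{A}\le1. \]
Then the weighted sample covariance matrix converges to $\mu$, even with arbitrary (unbounded) deterministic weights. Unbounded weights are handled by truncating $t_k$ at a level $M$: this is a perturbation of rank at most $m\,\tau((M,\infty))+o(m)$, so by the rank inequality it changes the ESD by an arbitrarily small amount in Kolmogorov distance. The conditional almost sure statement then upgrades to an unconditional one by Fubini. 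Almost sure convergence itself follows from the standard martingale concentration of the Stieltjes transform for independent rank-one summands, which does not require any moments of $R$.

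It remains to verify the quadratic form condition in the two cases, and this is exactly what underlies \cref{thm:indep-tensor} and \cref{thm:main}. In case 1 it is the content of Yaskov's proof of \cref{thm:indep-tensor}. In case 2 it should be precisely what the proof of \cref{thm:main} establishes in \cref{sec:concentration-proof-of-main}. Conditions \ref{cond:expec}--\ref{cond:var} give the norm concentration $\nm{\mathbf y}_2^2/p\to1$ in $L^2$. Condition \ref{cond:2nd-4th-compare} with $Ld^2/n\to0$ controls $\E(\mathbf y^\trp A\mathbf y-\tr A)^2=o(p^2)$: after expanding, unconditionality kills the odd terms and exchangeability reduces everything to the mixed moments in the conditions. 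So I would reuse that variance bound verbatim.

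The main obstacle is the uniformity in $A$ required for arbitrary weights. I would either invoke Yaskov's formulation, which only needs the condition for each fixed sequence of matrices $A$, or apply the variance bound with $A$ a resolvent of the leave-one-out matrix. That resolvent is independent of $\mathbf y^{(k)}$, so the bound applies conditionally.
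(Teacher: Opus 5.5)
Your proposal is correct and follows the paper's proof. You write each sample as $R_k^d\mathbf y^{(k)}$, so that $K$ is a weighted sample covariance matrix with weights $R_k^{2d}$ whose empirical law tends a.s.\ to $\tau$. You then apply the weighted MP theorem (\cref{thm:weighted_MP}), using $\Sigma_p=\E(x_1^2)I_p\to I_p$ and the quadratic-form condition that follows from $\Var(\mathbf y^\trp A\mathbf y)=o(p^2)$; this variance bound comes from Yaskov's proof in case~1 and from the proof of \cref{thm:main} in case~2.

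The truncation, conditioning and martingale steps you sketch are what the cited weighted MP theorem already packages. The ``uniformity in $A$'' you flag is not a real obstacle: the variance bound in \cref{sec:concentration-proof-of-main} has the form $o(1)\cdot p\Fnm{A}^2$, with the $o(1)$ independent of $A$.
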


\subsection{Verifying Theorem \ref{thm:main} for diverging \texorpdfstring{$d$}{d}}
One may have noticed that it is generally not hopeful to extend \cref{thm:finite-case} to the case when $d \to \infty$, since the term $\E(\abs{X_1}^{4d + \epsilon})$ will explode in \cref{thm:finite-case}\ref{cond:fixed-d-integrability}. Therefore we need to relax the three conditions in \cref{thm:main} differently when $d$ is allowed to grow, which will be discussed in \cref{sec:growing-d}. The following result relaxes the condition $\E(X_1^2\dotsm X_d^2)\to 1$ to the convergence of the $2$-norm of the entire base vector $\mathbf X$.

\begin{thm} \label{thm:the-norm-cond}
    Assume $d = o(\sqrt n)$. Suppose the exchangeable unconditional base vector $\mathbf X$ satisfies \begin{enumerate}[label=(\roman*)]
        \item \label{cond:isotropic-growing-d} $\E X_1^2 = 1$,
        \item \label{cond:2-norm-conv} $\frac{1}{n^{4d}}\E\nm{\mathbf X}_2^{8d} \to 1$, and 
        \item \label{cond:8th-block-mmt} there exists a sequence $L = L_n$ satisfying $Ld^2 / n \to 0$ such that $\E(X_1^8\dotsm X_r^8) \leq L^{2r}$ for all $1 \leq r\leq d$.
        % \item $\nm{\mathbf X}_2$ is subexponential with proxy variance uniformly bounded for all $n$,
        % \item $\nm{\mathbf X}_4^2$ is subexponential with proxy variance uniformly bounded for all $n$.
    \end{enumerate} then the three conditions in \cref{thm:main} are satisfied, and we have with probability $1$ that \[\ESD(K_{\PrincipalT}) \wkconv \mu_{\MP(c)}.\]

    Alternatively, \cref{cond:8th-block-mmt} may be replaced by 
    \begin{enumerate}[resume,label=(iii')]
        \item \label{cond:4-norm-bdd} there exists a sequence $L = L_n$ satisfying $Ld^2 / n \to 0$ such that $\frac{1}{n^{2r}} \E\nm{\mathbf X}_4^{8r} \leq L^{2r}$ for all $1 \leq r \leq d$.\end{enumerate}
\end{thm}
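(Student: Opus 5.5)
The plan is to derive conditions \ref{cond:expec}--\ref{cond:2nd-4th-compare} of \cref{thm:main} from norm moments. The bridge is the exchangeable expansion
\[
\E\nm{\mathbf X}_2^{2k}=\sum_{i_1,\dots,i_k\in[n]}\E\bigl(X_{i_1}^2\dotsm X_{i_k}^2\bigr)=n^{(k)}\,\E(X_1^2\dotsm X_k^2)+R_k,\qquad R_k\ge 0,
\]
where $n^{(k)}=n(n-1)\dotsm(n-k+1)$ and $R_k$ collects the tuples with a repeated index. Since $d^2/n\to0$, we have $n^{2d}/n^{(2d)}\to1$. So condition~\ref{cond:expec} amounts to showing $n^{-d}\E\nm{\mathbf X}_2^{2d}\to1$ and $R_d=o(n^d)$, and condition~\ref{cond:var} to showing $\E(X_1^2\dotsm X_{2d}^2)\to1$, by the same argument with $d$ replaced by $2d$.

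The first step handles the norm moments by a Lyapunov sandwich. The function $k\mapsto \bigl(n^{-k}\E\nm{\mathbf X}_2^{2k}\bigr)^{1/k}$ is nondecreasing. At $k=1$ it equals $1$ by \cref{cond:isotropic-growing-d}, and at $k=4d$ it tends to $1$ by \cref{cond:2-norm-conv}. Hence $n^{-k}\E\nm{\mathbf X}_2^{2k}$ lies between $1$ and $(1+o(1))^{k/4d}$ for every $k\le 4d$, uniformly. In particular it tends to $1$ for $k=d$ and for $k=2d$, and it is $O(1)$ for all $k\le 4d$.

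The second step bounds the collision terms. A tuple with a repeated index contains some pair $i_a=i_b$, so
\[
R_k\le \binom{k}{2}\,\E\bigl[\nm{\mathbf X}_4^4\,\nm{\mathbf X}_2^{2k-4}\bigr]\le \binom k2\bigl(\E\nm{\mathbf X}_4^8\bigr)^{1/2}\bigl(\E\nm{\mathbf X}_2^{4k-8}\bigr)^{1/2}.
\]
Under \cref{cond:4-norm-bdd} with $r=1$, we have $(\E\nm{\mathbf X}_4^8)^{1/2}\le nL$. Together with the first step this gives $R_k/n^k=O(k^2L/n)$, which tends to $0$ for $k\in\{d,2d\}$ because $Ld^2/n\to0$. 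Combined with the upper bound $R_k\ge0$, this yields conditions~\ref{cond:expec} and \ref{cond:var}.

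The third step gives condition~\ref{cond:2nd-4th-compare}. Again by exchangeability and positivity,
\[
n^{(r)}\,n^{(2d-2r)}\;\E\bigl(X_1^4\dotsm X_r^4X_{r+1}^2\dotsm X_{2d-r}^2\bigr)\lesssim \E\bigl[\nm{\mathbf X}_4^{4r}\,\nm{\mathbf X}_2^{4d-4r}\bigr]\le\bigl(\E\nm{\mathbf X}_4^{8r}\bigr)^{1/2}\bigl(\E\nm{\mathbf X}_2^{8d-8r}\bigr)^{1/2}.
\]
Here the ratio of falling factorials to powers is $1+o(1)$, and the combinatorial factor is at most a constant to the power $r$. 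The right-hand side is at most $n^rL^r\cdot O(n^{2d-2r})$. Dividing through gives a bound $(CL)^r$, and replacing $L$ by $CL$ preserves $Ld^2/n\to0$. This proves \cref{thm:the-norm-cond} under \ref{cond:4-norm-bdd}.

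The fourth step, which I expect to be the main obstacle, is showing that \cref{cond:8th-block-mmt} implies \ref{cond:4-norm-bdd}, possibly after enlarging $L$ by a constant factor. I would expand $\E\nm{\mathbf X}_4^{8r}=\sum \E(X_{i_1}^4\dotsm X_{i_{2r}}^4)$ and group the terms by the number $j$ of distinct indices. For the bulk, where all multiplicities are at most $2$, generalized Hölder over disjoint blocks together with exchangeability bounds the moment by $\E(X_1^8\dotsm X_s^8)^{\cdot}$-type block moments. These give at most $n^{j}L^{2r-j}$, roughly, which sums to $\lesssim n^{2r}L^{2r}$ when $L\le n$. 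The delicate part is tuples with high multiplicity, which involve moments $\E X_1^{4a}$ with $a>2$ that \cref{cond:8th-block-mmt} does not obviously control. For these I would first try to use the lower-order exponents on the unrepeated coordinates and redistribute via Hölder. Failing that, I would avoid high multiplicities altogether by running the third step directly on the index set, so that only repeated-pair collisions, and hence only eighth-power blocks, appear.
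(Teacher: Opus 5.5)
Your Steps 1--3 are correct and essentially reproduce the paper. The Lyapunov sandwich plus the pair-collision bound with Cauchy--Schwarz is \cref{lem:2-norm-conv-cond}, and Step 3 is the paper's treatment of \ref{cond:4-norm-bdd>}. The only difference there is that the paper invokes \cref{lem:exchangeable-Schur-cvx} where you count distinct tuples. These steps also give \ref{cond:expec} and \ref{cond:var} under \ref{cond:8th-block-mmt}, because Step 2 only needs $r=1$, and $\E\nm{\mathbf X}_4^8=n\E X_1^8+n(n-1)\E(X_1^4X_2^4)\le n^2\E X_1^8$.

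The gap is Step 4. The implication \ref{cond:8th-block-mmt}$\,\Rightarrow\,$\ref{cond:4-norm-bdd} is false, even if $L$ may be replaced by any sequence with $Ld^2/n\to0$. Since $\nm{\mathbf X}_4^{8d}\ge\sum_i X_i^{8d}$, condition \ref{cond:4-norm-bdd} at $r=d$ forces $\E X_1^{8d}\le n^{2d-1}L^{2d}$. This is a single-coordinate moment that \ref{cond:8th-block-mmt} never sees.

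Concretely, let $\mathbf X$ have i.i.d.\ coordinates with density $\frac{1}{\sqrt2}e^{-\sqrt2\abs{x}}$, and let $d=\lfloor n^{0.4}\rfloor$.
\begin{enumerate}
\item This $\mathbf X$ is exchangeable, unconditional and isotropic.
\item Condition \ref{cond:8th-block-mmt} holds with the constant $L=(\E X_1^8)^{1/2}$.
\item Condition \ref{cond:2-norm-conv} holds because the Poincar\'e constant is bounded, so the computation in the proof of \cref{prop:subexponential-Gaussian-norm} applies.
\item But $\E X_1^{8d}=(8d)!/2^{4d}$. So any $L'$ satisfying \ref{cond:4-norm-bdd} at $r=d$ is at least a constant times $d^4/n$, which is of order $n^{0.6}$, and then $L'd^2/n\to\infty$.
\end{enumerate}
This is the content of the remark closing the log-concave discussion in \cref{sec:examples}. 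So the high-multiplicity terms you flagged are a real obstruction, and no H\"older redistribution removes them.

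The missing idea is never to sum over the positions of the fourth powers. Keep them on coordinates $1,\dots,r$ and average only the squared coordinates. Counting tuples of distinct indices in $[n]\setminus[r]$ and using exchangeability (or \cref{lem:exchangeable-Schur-cvx}), you get
\[
(n-r)^{(2d-2r)}\,\E\bigl(X_1^4\dotsm X_r^4X_{r+1}^2\dotsm X_{2d-r}^2\bigr)\le\E\bigl[X_1^4\dotsm X_r^4\,\nm{\mathbf X}_2^{4d-4r}\bigr]\le\sqrt{\E(X_1^8\dotsm X_r^8)}\,\sqrt{\E\nm{\mathbf X}_2^{8d-8r}}.
\]
Condition \ref{cond:8th-block-mmt}, together with your Step 1 applied with $k=4d-4r$, bounds the right-hand side by $L^r n^{2d-2r}(1+o(1))$ uniformly in $r$. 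Since $(n-r)^{(2d-2r)}=n^{2d-2r}(1+o(1))$, this gives \ref{cond:2nd-4th-compare} after enlarging $L$ by a constant factor. This is exactly the paper's argument. Your fallback sentence points toward it but never states it, so as written the main case of the theorem is not proved.
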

Remark that, as opposed to \cref{thm:main}\ref{cond:expec}, \cref{thm:the-norm-cond}\ref{cond:isotropic-growing-d} requires exact isotropcity. Other than that, the new set of conditions in \cref{thm:the-norm-cond} significantly weaken \cref{thm:finite-case}\ref{cond:fixed-d-integrability}. Meanwhile, we strengthen \cref{thm:finite-case}\ref{cond:asymp-uncorr}, which required the second moments to be asymptotically uncorrelated. Below we demonstrate one natural and useful way to apply \cref{thm:the-norm-cond}. The idea is that proper concentration of the $2$- and $4$-norms can imply the necessary conditions.

\begin{prop} \label{prop:subexponential-Gaussian-norm}
    Assume $\mathbf X$ is isotropic, exchangeable, unconditional, with the component satisfying $\sup_{n} \E X_1^4 < \infty$. Assume in addition that $\nm{\mathbf X}_2$ and $\nm{\mathbf X}_4$ are both subexponential with uniformly bounded proxy variance $2\sigma^2$, i.e., for $k = 2,4$ and $ \lambda^2 \leq 1/\sigma^2$, \[
    \E\exp\bigl(\lambda (\nm{\mathbf X}_k - \E \nm{ \mathbf X}_k)\bigr) \leq \exp(\lambda^2 \sigma^2).
\] Then given $d = o(n^{1/3})$, we have $\ESD(K_{\PrincipalT}) \wkconv \mu_{\MP(c)}$ with probability $1$. % both the uniform bounded proxy variance and the component 4th moment bound can be relaxed in the subexponential case

% \fc{I have tried improving this, but there seems to be no good tools to do so. An example that shows the sharpness would be good}
    % \fc{There is a chance we can improve this to $o(n^{1/3})$ by (iv) above. Apparently in the worst case for Inv2 distributions $\sqrt n + $ Laplace, which satisfies Poincare?, the inequality is true for $d = o(\sqrt n)$.}
    
    Assume furthermore that $\nm{\mathbf X}_4$ is sub-Gaussian with uniformly bounded proxy variance $2\sigma^2$, i.e., for all $\lambda \in \R$, \[
        \E\exp\bigl(\lambda (\nm{\mathbf X}_4 - \E \nm{ \mathbf X}_4)\bigr) \leq \exp(\lambda^2 \sigma^2).
    \] Then given $d = o(n^{1/2})$, we have $\ESD(K_{\PrincipalT}) \wkconv \mu_{\MP(c)}$ with probability $1$.
\end{prop}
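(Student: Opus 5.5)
We will deduce \cref{prop:subexponential-Gaussian-norm} from \cref{thm:the-norm-cond}, using the version with \cref{cond:4-norm-bdd}. Condition~\ref{cond:isotropic-growing-d} is part of the hypotheses. So the work is to verify \ref{cond:2-norm-conv} and \ref{cond:4-norm-bdd} from the norm concentration. The resulting constraint on $L$ then dictates the admissible growth of $d$.

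For \ref{cond:2-norm-conv}, write $\nm{\mathbf X}_2 = \mu_2 + Z_2$ with $\mu_2 = \E\nm{\mathbf X}_2$. Isotropy gives $\E\nm{\mathbf X}_2^2 = n$. Subexponentiality gives $\Var\nm{\mathbf X}_2 = O(\sigma^2)$, hence $\mu_2 = \sqrt n - O(1/\sqrt n)$. We need
\[
\E(1 + Z_2/\mu_2)^{8d} \to 1 .
\]
Bound $(1+t)^{8d}\le e^{8dt}$ and apply the moment generating function bound with $\lambda = 8d/\mu_2 \approx 8d/\sqrt n$. This $\lambda$ tends to $0$, so it is admissible. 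We get $\E\nm{\mathbf X}_2^{8d}/\mu_2^{8d} \le \exp(64 d^2\sigma^2/n)\to 1$ since $d=o(\sqrt n)$. The lower bound follows from Jensen. Finally $(\mu_2/\sqrt n)^{8d} = (1-O(1/n))^{8d}\to 1$. So \ref{cond:2-norm-conv} holds for $d=o(\sqrt n)$ in both regimes.

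For \ref{cond:4-norm-bdd}, set $\mu_4 = \E\nm{\mathbf X}_4 \le (\E\nm{\mathbf X}_4^4)^{1/4} = (n\E X_1^4)^{1/4} \le C n^{1/4}$, using $\sup_n\E X_1^4<\infty$. We need $\E\nm{\mathbf X}_4^{8r} \le (nL)^{2r}$ for $r\le d$. By Minkowski,
\[
\nm{\nm{\mathbf X}_4}_{L^{8r}} \le \mu_4 + \nm{Z_4}_{L^{8r}} .
\]

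In the sub-Gaussian case the standard moment bound gives $\nm{Z_4}_{L^q}\lesssim \sigma\sqrt q$. Hence
\[
\E\nm{\mathbf X}_4^{8r} \le \bigl(Cn^{1/4} + C\sqrt r\bigr)^{8r}.
\]
For $r\le d=o(\sqrt n)$ we have $\sqrt r\lesssim n^{1/4}$, so this is at most $(C'n^{1/4})^{8r} = (C'^4 n)^{2r}$. Thus $L$ can be taken constant, and $Ld^2/n\to0$ is exactly $d=o(\sqrt n)$.

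In the subexponential case $\nm{Z_4}_{L^q}\lesssim \sigma q$, giving
\[
\E\nm{\mathbf X}_4^{8r} \le \bigl(Cn^{1/4}+Cr\bigr)^{8r}.
\]
We need $(n^{1/4}+r)^4\le nL$, i.e.\ $L\gtrsim 1 + d^4/n$. The requirement $Ld^2/n\to 0$ then reads $d^2/n + d^6/n^2\to 0$, which is $d=o(n^{1/3})$. This matches the claimed range.

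The main obstacle is the bookkeeping in the subexponential case: we must optimize the choice of $L$ and confirm that the $L^q$ moment growth (linear in $q$) is the right one. The standard fact needed is $\nm{Z}_{L^q}\lesssim \sigma q$, which follows from the moment generating function bound restricted to $|\lambda|\le 1/\sigma$ by Markov's inequality and integration of tails. All constants are uniform in $n$ because the proxy variance is uniformly bounded. Once \ref{cond:isotropic-growing-d}, \ref{cond:2-norm-conv} and \ref{cond:4-norm-bdd} hold, \cref{thm:the-norm-cond} yields the almost sure convergence $\ESD(K_{\PrincipalT})\wkconv\mu_{\MP(c)}$.
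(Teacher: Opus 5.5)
Your proposal is correct and follows the paper's proof. Both reduce to \cref{thm:the-norm-cond} with condition~\ref{cond:4-norm-bdd}. Both get condition~\ref{cond:2-norm-conv} from the moment generating function bound at $\lambda\approx 8d/\sqrt n$ together with Jensen. Both get condition~\ref{cond:4-norm-bdd} from Minkowski, $\E\nm{\mathbf X}_4\le (n\E X_1^4)^{1/4}$, and the linear (resp.\ square-root) growth of $L^q$ norms. This yields $L\asymp 1+d^4/n$ and hence $d=o(n^{1/3})$, resp.\ constant $L$ and $d=o(\sqrt n)$.

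The only cosmetic difference is in condition~\ref{cond:2-norm-conv}. The paper centers at $\sqrt n$ via $\log x\le x-1$ and simply discards the factor $\exp\bigl(\frac{8d}{\sqrt n}(\E\nm{\mathbf X}_2-\sqrt n)\bigr)\le 1$. You center at $\E\nm{\mathbf X}_2$ and use the variance bound to show it equals $\sqrt n-O(n^{-1/2})$. Both arguments work.
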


\subsection{Extension to the tensor power model} \label{sec:ext-tensor-power}
\cref{thm:finite-case} and \cref{thm:the-norm-cond} not only provide us with sufficient conditions to handle the the principal tensor models, but also allow our result to extend to the tensor power models. We base this extension on the ideas that appeared in \cite[Section~3]{Yaskov_2023}, and prove it in \cref{sec:tensor-power}.

For the sake of intuition, let $\mathbf x^{(1)},\dotsc,\mathbf x^{(m)}$ be i.i.d.\ according to $\PowerT(n,d,\mathbf X)$. We need to describe the spectrum of the sample covariance matrix $K_{\PowerT} = \frac{1}{m}\sum_{k=1}^m \mathbf x^{(k)}{\mathbf {x}^{(k)}}^\trp$: the eigenvalues have the form \[\lambda_1 \geq \dotsm \geq \lambda_{\binom{n+d-1}{d}} \geq \lambda_{\binom{n+d-1}{d} + 1} = \dotsb  = \lambda_{n^d} = 0.\] It is not hard to understand why most eigenvalues of $K_{\PowerT}$ are zero, since the same monomial can repeat multiples times in the coordinates of the tensor $\mathbf x = \mathbf X^{\otimes d}$ (as discussed after \cref{defn:tensor-models}). The following result says that the ESD of the largest $\binom{n+d-1}{d}$ eigenvalues, which correspond to the nontrivial values, can be described asymptotically by the MP law.

\begin{thm} \label{thm:tensor-power}
    Assume the sequence of base vectors $\mathbf X^{(n)} \in \R^n$ satisfies the conditions of \cref{thm:finite-case} for fixed $d$, or \cref{thm:the-norm-cond} for $d = o(\sqrt n)$. Suppose that $\binom{n}{d} / m \to c$ for some constant $c > 0$, and $\mathbf x^{(1)},\dotsc,\mathbf x^{(m)}$ be i.i.d.\ according to $\PowerT(n,d,\mathbf X)$. Then it holds with probability $1$ that \[\frac{1}{\binom{n+d-1}{d}}\sum_{j=1}^{\binom{n+d-1}{d}} \delta_{\lambda_j / d!} \wkconv \mu_{\MP(c)}.\]
\end{thm}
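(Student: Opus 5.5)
The plan is to reduce the tensor power model to a \emph{reduced symmetric tensor} model. That model has one coordinate per multiset of size $d$ from $[n]$. I will then compare its sample covariance matrix with $K_{\PrincipalT}$ by eigenvalue interlacing, since the multilinear coordinates form a principal submatrix of codimension $o(p)$. No moment information about the non-multilinear coordinates should be needed.

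\emph{Step 1 (isometric reduction).} Index multisets $I$ by exponent vectors $(d_1,\dotsc,d_n)$ with $\sum_\alpha d_\alpha = d$, and let $\mathcal O_I\subset[n]^d$ be the orbit of index tuples realizing $I$, so $\abs{\mathcal O_I} = d!/\prod_\alpha d_\alpha!$. Put $u_I = \abs{\mathcal O_I}^{-1/2}\sum_{i\in\mathcal O_I} e_i$. These are orthonormal in $\R^{n^d}$, and there are $p' \coloneqq \binom{n+d-1}{d}$ of them. Then
\[
\mathbf X^{\otimes d} = \sum_I X^I \sum_{i\in\mathcal O_I} e_i = S\mathbf y, \qquad \mathbf y_I \coloneqq \abs{\mathcal O_I}^{1/2} X^I,
\]
where $S$ is the $n^d\times p'$ matrix with columns $u_I$, so $S^\trp S = I_{p'}$. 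Hence $K_{\PowerT} = S\,\tilde K\,S^\trp$ with $\tilde K = \frac1m\sum_k \mathbf y^{(k)}{\mathbf y^{(k)}}^\trp$. The spectrum of $K_{\PowerT}$ is therefore that of $\tilde K$ together with $n^d-p'$ zeros. Consequently $\lambda_1,\dotsc,\lambda_{p'}$ are exactly the eigenvalues of $\tilde K$, and $\lambda_j/d!$ are the eigenvalues of $\tilde K/d!$. After dividing by $d!$, the rescaled vector $\mathbf y/\sqrt{d!}$ has coordinate $X^I/\sqrt{\prod_\alpha d_\alpha!}$. In particular, on multilinear multisets ($I\in\binom{[n]}{d}$) it coincides exactly with the principal tensor $\mathbf x\sim\PrincipalT(n,d,\mathbf X)$.

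\emph{Step 2 (interlacing).} Order the coordinates so that the $p=\binom nd$ multilinear ones come first. Then $K_{\PrincipalT}$, built from the same samples, is the upper-left $p\times p$ principal submatrix of $\tilde K/d!$. By Cauchy interlacing, for every $t$ the eigenvalue counting functions satisfy $\abs{\#\{j:\lambda_j(\tilde K/d!)\le t\} - \#\{j:\lambda_j(K_{\PrincipalT})\le t\}} \le p'-p$. This gives, deterministically,
\[
\sup_t\bigl\lvert F_{\ESD(\tilde K/d!)}(t) - F_{\ESD(K_{\PrincipalT})}(t)\bigr\rvert \le \frac{p'-p}{p'} + \Bigl(1-\frac{p}{p'}\Bigr) \le 2\Bigl(1-\frac{p}{p'}\Bigr).
\]

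\emph{Step 3 (counting).} I need $p'/p\to1$. We have
\[
\frac{p'}{p} = \prod_{j=0}^{d-1}\frac{n+j}{n-j} \le \Bigl(1+\frac{2d}{n-d}\Bigr)^{d} \le \exp\Bigl(\frac{2d^2}{n-d}\Bigr),
\]
which tends to $1$ whenever $d=o(\sqrt n)$, and trivially for fixed $d$.

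\emph{Step 4 (conclusion).} Under the hypotheses of \cref{thm:finite-case} (fixed $d$) or \cref{thm:the-norm-cond} ($d=o(\sqrt n)$), together with $p/m\to c$, those theorems give $\ESD(K_{\PrincipalT})\wkconv\mu_{\MP(c)}$ almost surely. Since the bound of Step~2 holds on every sample path and tends to $0$ by Step~3, the Kolmogorov distance between the two ESDs tends to $0$ almost surely. Hence $\ESD(\tilde K/d!)$, which is exactly the measure in the statement, converges weakly to $\mu_{\MP(c)}$ almost surely.

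The only real care point is Step~1. One must check that the weights $\abs{\mathcal O_I}^{1/2}$ make $S$ an isometry, so that the nonzero spectrum is preserved exactly. One must also check that, after dividing by $d!$, the multilinear coordinates have weight exactly $1$ rather than some other constant; this is what makes the $p\times p$ principal block literally $K_{\PrincipalT}$. Everything else is deterministic linear algebra layered on the principal tensor results.
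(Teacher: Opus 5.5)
Your argument is correct, but after the first step it takes a different route from the paper. Step~1 is exactly the reduction to $\ReducedT$ that the paper imports from \cite[Proposition~3.2]{Yaskov_2023}.

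From there the paper's proof, which is only sketched, follows \cite[Proposition~3.3]{Yaskov_2023}. It transfers the quadratic-form concentration \eqref{eq:conv-in-prob} from the principal vector $\mathbf x$ to the full reduced vector $\mathbf y$, and then reapplies \cref{thm:MP}. To control the non-multilinear coordinates it needs an extra input, $\nm{\mathbf X}_2^{2d}/n^d\to 1$ in probability, which it checks via \cref{lem:suff-cond-prod-approx} or \cref{lem:2-norm-conv-cond}.

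You instead note that $K_{\PrincipalT}$ is literally the $p\times p$ principal submatrix of $\tilde K/d!$, with codimension $p'-p=o(p')$. Cauchy interlacing then bounds the Kolmogorov distance between the two ESDs by $2(1-p/p')$ on every sample path, so the conclusion is inherited directly from \cref{thm:finite-case} or \cref{thm:the-norm-cond}.

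Your route buys three things:
\begin{itemize}
\item it is shorter and fully rigorous;
\item it needs no moment information about the coordinates involving $X_\alpha^{d_\alpha}$ with $d_\alpha\ge 2$;
\item it shows that, for any base vector, the principal-tensor MP law with $d=o(\sqrt n)$ transfers to the corresponding tensor-power statement. This covers, for example, \cref{thm:indep-tensor}, and the weighted setting of \cref{thm:weighted-sample}, since the weights preserve the principal-submatrix structure.
\end{itemize}

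The paper's route buys a stronger, sample-level statement: the reduced tensor $\mathbf y$ itself satisfies the concentration condition \eqref{eq:conv-in-prob}. That property of a single sample can be fed into other results that take it as input, rather than only yielding the ESD conclusion.
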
 % \dm{Maybe state theorem first and them supply intuition and explain the proof}

We remark that due to the $d = o(\sqrt n)$ assumption, to say  $\binom{n}{d} / m \to c$ is equivalent to saying $\binom{n+d-1}{d} / m \to c$ above; see \cref{lem:sqrt-n-lim}. % \fc{consider possibly adding a sentence saying that we state all results for principal, and leave extension to tensor power to the reader when theorem 2.2 2.4 are satisfied. But I think this is probably going to be more confusing}

We outline some necessary ideas from \cite[Section~3]{Yaskov_2023}, which will appear in the proof. To study the tensor power model with feature size $n^d$, we need to look at the \emph{reduced symmetric tensor model} $\ReducedT(n,d,\mathbf X)$, where $p = \binom{n+d-1}{d}$, and each sample $\mathbf x_p$ consists of entries \[x_{d_1,\dotsc,d_n} = \prod_{\alpha = 1}^n \frac{X_\alpha^{d_\alpha}}{\sqrt{d_\alpha !}}, \quad \text{where }\sum_{\alpha =1}^n d_\alpha = d.\]

This definition might seem obscure, and hence we provide some intuition. Notice that the law $\ReducedT(n,d,\mathbf X)$ can be obtained from $\PowerT(n,d,\mathbf X)$ as follows: for each $t = (t_1,\dotsc,t_d) \in [n]^d$, we have a corresponding entry $\prod_{i=1}^d X_{t_i}$ in the tensor power model. One can then see the relation \[\frac{\prod_{i=1}^d X_{t_i}}{\prod_{\alpha=1}^n \sqrt{d_\alpha !}} = \prod_{\alpha=1}^n  \frac{X_\alpha^{d_\alpha}}{\sqrt{d_\alpha !}}, \quad \text{where }d_\alpha = \sum_{i=1}^d \ind\{\alpha = t_i\}.\] Note that $d! / \prod_{\alpha }d_\alpha!$ counts how many times the entry $\prod_{\alpha = 1}^n X_{\alpha}^{d_{\alpha}}$ exactly appears in the (full) tensor power model. Therefore, up to a factor of $d!$, the reduced symmetric tensor model aims to remove the repetition of entries in the tensor power model, while retaining the sample covariance structure. 

The sample covariance matrix $K_{\PowerT}$ for the tensor power model has at most $p = \binom{n+d-1}{d}$ nonzero eigenvalues. From the intuition in the above paragraph, it should not be hard to see that these $p = \binom{n+d-1}{d}$ eigenvalues of $K_{\PowerT}$ are precisely $d!$ times the $p$ eigenvalues of the sample covariance $K_{\ReducedT}$ for the reduced symmetric tensor model; see \cite[Proposition~3.2]{Yaskov_2023} for the proof. Therefore, the study of the spectrum $K_{\PowerT}$ is equivalent to the study of $\ESD(K_{\ReducedT})$. To complete the proof, in \cref{sec:tensor-power}, we will sketch prove that $\ESD(K_{\ReducedT}) \wkconv \mu_{\MP(c)}$ with probability $1$. This will follow from our previous analysis on $\ESD(K_{\PrincipalT})$.

\subsection{Examples and applications}
% \fc{some 2.1 some 2.2 2.4, in which case extend by 2.6}
% We provide some justification that shows the three conditions cannot be further relaxed. First, convergence of the block second moment, \cref{cond:expec}, is necessary because we need the population covariance matrix to have converging spectrum. Second, \cref{cond:2nd-4th-compare} is fundamental, due to the following observation.

% \dm{First have the condition for $d$ finite. Theorem \ref{thm:finite-case}, then we can have Corrollary 1.4, and then Theorem \ref{thm:the-norm-cond}.}

% For the special case where $X_1,\dotsc,X_n$ are furthermore independent, \cref{thm:main}\ref{cond:expec} becomes $\bigl(\E X_1^2\bigr)^d \to c$. Therefore, if $d$ is infinite, then the only possibility is for $c = 1$ and $\E X_1^2 \to 1$. Therefore we will assume $c = 1$ henceforth without loss of generality, since we can always divide $X_1$ by $c^{1/d}$ when $d$ is finite. \fc{edit}

We now look at specific base vectors $\mathbf X \sim \nu$ such that the three conditions of \cref{thm:main} are satisfied. The verification of these examples are covered in \cref{sec:examples}. Note that for brevity we state all results for $K_{\PrincipalT}$, but thanks to \cref{thm:tensor-power}, for base vectors that satisfy conditions of \cref{thm:finite-case} and \cref{thm:the-norm-cond}, one can in addition state a convergence result for the spectrum of the nontrivial eigenvalues of $K_{\PowerT}$.

\subsubsection{Approximation to mixture distributions}
We begin with an easy example in which $\mathbf X$ follows a mixture of product distributions. By de~Finetti's theorem, this covers every exchangeable vector which extends to an infinite exchangeable sequence. In this case, we can factor the moments $\E(X_1^4\dotsm X_r^4 X_{r+1}^2\dotsm X_{2d-r}^2)$ appearing in \cref{thm:main} and verify the conditions directly. We prove this as a motivating example at the beginning of \cref{sec:finite-d}.

\begin{prop} \label{prop:mixture}
    Assume $d = o(\sqrt n)$ and $L$ to be some positive constant. Let $\mathbf X^{(n)}$ follow any mixture $\otimes^n \theta\,dw_n$ of the product of i.i.d.\ symmetric distributions $\theta$, with $\int x^2\,d\theta = 1$ and $\int x^4\,d\theta \leq L$ for $w_n$-a.e.\ $\theta$. Then we have with probability $1$ that $\ESD(K_{\PrincipalT}) \wkconv \mu_{\MP(c)}$.
\end{prop}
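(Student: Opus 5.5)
The plan is to verify the three conditions of \cref{thm:main} directly, using the mixture structure to factor every mixed moment. Write $\mathbf X \sim \int \otimes^n \theta\,dw_n(\theta)$, and set $m_2(\theta) = \int x^2\,d\theta$ and $m_4(\theta) = \int x^4\,d\theta$.

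First I check the symmetry hypotheses. Each component of $\otimes^n\theta$ is i.i.d.\ and symmetric, so $\otimes^n\theta$ is invariant under coordinate permutations and sign changes. Both invariances survive mixing over $\theta$, so $\mathbf X$ is exchangeable and unconditional.

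Next, conditioning on $\theta$ turns each mixed moment into a product of one-dimensional moments:
\[
\E(X_1^4\dotsm X_r^4X_{r+1}^2\dotsm X_{2d-r}^2) = \int m_4(\theta)^r\, m_2(\theta)^{2d-2r}\,dw_n(\theta).
\]
Since $m_2 = 1$ and $m_4\le L$ for $w_n$-a.e.\ $\theta$, this moment equals $\int m_4^r\,dw_n \le L^r$. That gives \cref{cond:2nd-4th-compare} with the constant sequence $L_n = L$. The hypothesis $d = o(\sqrt n)$ is exactly $Ld^2/n\to 0$ when $L$ is constant.

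The same factorization handles the other two conditions. It gives $\E(X_1^2\dotsm X_d^2) = \int m_2^d\,dw_n = 1$, which is \cref{cond:expec}. It also gives $\E(X_1^2\dotsm X_{2d}^2) = 1 = 1^2$, so the quantity in \cref{cond:var} vanishes identically. Conditions (A) and (B) therefore hold with equality, for every $n$ and every $d$.

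With all three conditions verified, \cref{thm:main} yields $\ESD(K_{\PrincipalT})\wkconv\mu_{\MP(c)}$ almost surely. There is essentially no obstacle here. The only point needing care is the role of the assumption $m_2(\theta) = 1$ for almost every $\theta$, rather than merely on average. If $m_2$ were allowed to vary with $\theta$, the terms $\int m_2^d\,dw_n$ would behave like a variance-type quantity that need not converge, and \cref{cond:var} could fail.
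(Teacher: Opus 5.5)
Your proposal is correct and follows the paper's proof essentially verbatim: you condition on $\theta$ to factor each block moment as $\int m_4(\theta)^r m_2(\theta)^{2d-2r}\,dw_n(\theta) \le L^r$, and the constraint $m_2 = 1$ for $w_n$-a.e.\ $\theta$ makes \cref{cond:expec,cond:var} hold exactly, after which \cref{thm:main} applies. You also spell out two points the paper leaves implicit: that the mixture is exchangeable and unconditional, and that $d = o(\sqrt n)$ gives $Ld^2/n \to 0$ when $L$ is constant.
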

\subsubsection{Uniform signed permutations}
The next example is a natural choice for a discrete exchangeable random vector which cannot be extended to an infinite exchangeable sequence in general. In this result, we consider a uniform random permutation of a fixed vector. Since it is very straightforward to verify the conditions of \cref{thm:the-norm-cond}, we give all of the necessary details here.
\begin{exa}
    Assume $d = o(\sqrt n)$. For each $n$, let $\mathbf c = (c_1,\dotsc,c_n)$ be a fixed vector in $\R^n$, where we require $\nm{\mathbf c}_2^2 = n$ and $\sup_n \frac{1}{n}\nm{\mathbf c}_4^4 < \infty$. Now let $\mathbf X$ follow the uniform distribution on the \df{signed permutations} of the vector $\mathbf c = (c_1,\dotsc,c_n)$: \[\bigl\{(a_1c_{\sigma(1)},\dotsc,a_nc_{\sigma(n)}) : \text{each } a_j = \pm 1 \text{ and }\sigma \in S_n\bigr\}.\]

    Then $\E X_1^2 = \frac{1}{n}\E\bigl(\sum_{j=1}^n X_j^2\bigr) = \frac{1}{n}\nm{\mathbf c}_2^2 = 1$ and \[
    \frac{1}{n^{4d}} \E \nm{\mathbf X}_2^{8d} = \frac{1}{n^{4d}} \nm{\mathbf c}_2^{8d} = 1.
\] If we define $L = \sup_n \frac{1}{n}\nm{\mathbf c}_4^4$, then for all $n$ and $1 \leq r \leq d$, we have \[
    \frac{1}{n^{2r}} \E\nm{\mathbf X}_4^{8r} = \frac{1}{n^{2r}}\nm{\mathbf{c}}_4^{8r} \leq L^{2r}.
\] We have thus verified \cref{thm:the-norm-cond} conditions \ref{cond:isotropic-growing-d}\ref{cond:2-norm-conv}\ref{cond:4-norm-bdd}. Further note that by taking $\mathbf c = (1,\dotsc,1)$, we recover \cref{thm:indep-tensor}\ref{enu:hypercube} for $d = o(\sqrt n)$.
\end{exa}

 % This example shows that for fixed $d$, when the base vector is uniformly distributed on signed permutations of vectors other than $(1,\dotsc,1)$, the MP law for the principal tensor model may still be in force. \fc{maybe delete the last sentence, awkward}

\subsubsection{Continuous distributions}
It turns out that many well-studied classes of high-dimensional distributions satisfy \cref{thm:the-norm-cond}. For example, it is not hard to see that the conditions of \cref{prop:subexponential-Gaussian-norm} can be achieved through measures satisfying the Poincar\'e and log-Sobolev inequalities, thanks to Lipschitz concentration. For background on Poincar\'e and log-Sobolev inequalities we refer to \cite{Ledoux_2001}.

\begin{prop} \label{prop:Poincare}
    Assume the base vector $\mathbf X^{(n)}$ is isotropic, exchangeable, unconditional, and has uniformly bounded Poincar\'e constant. When $d = o(n^{1/3})$, we then have with probability $1$ that \[\ESD(K_{\PrincipalT}) \wkconv \mu_{\MP(c)}.\]
\end{prop}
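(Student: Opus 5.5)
The plan is to reduce \cref{prop:Poincare} to the first part of \cref{prop:subexponential-Gaussian-norm}. That part requires $d=o(n^{1/3})$, isotropy, exchangeability, unconditionality, $\sup_n \E X_1^4<\infty$, and subexponential concentration of $\nm{\mathbf X}_2$ and $\nm{\mathbf X}_4$ with uniformly bounded proxy variance. Everything except the last two items is part of the hypotheses, so the work lies in the moment bound and the concentration. Throughout, let $C_P$ denote a uniform upper bound on the Poincar\'e constants of $\mathbf X^{(n)}$.

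\textbf{Concentration of the norms.} The functions $x\mapsto\nm{x}_2$ and $x\mapsto \nm{x}_4$ are $1$-Lipschitz with respect to the Euclidean norm, because $\nm{x}_4\le\nm{x}_2$. I will invoke the Gromov--Milman/Bobkov--Ledoux theorem (see \cite{Ledoux_2001}): under a Poincar\'e inequality with constant $C_P$, every $1$-Lipschitz $f$ satisfies
\[
\E\exp\bigl(\lambda(f-\E f)\bigr)\le \exp(\kappa C_P\lambda^2)\quad\text{for }\abs{\lambda}\le \kappa'/\sqrt{C_P},
\]
with universal constants $\kappa,\kappa'$. After adjusting constants, this is exactly the subexponential bound required in \cref{prop:subexponential-Gaussian-norm}, with $\sigma^2$ a constant multiple of $C_P$, which is uniform in $n$. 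Some care will be needed to match the normalization of $\sigma$ in the statement. If the constants do not line up, I will enlarge $\sigma$; this is harmless because the proposition only asks that the proxy variance be uniformly bounded.

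\textbf{Fourth moment.} Apply the Poincar\'e inequality to $f(x)=x_1^2$. Since $\abs{\grad f}^2=4x_1^2$,
\[
\Var(X_1^2)\le 4C_P\,\E X_1^2=4C_P,
\]
so $\E X_1^4\le 1+4C_P$, uniformly in $n$. Strictly speaking, $x_1^2$ is not Lipschitz, but the Poincar\'e inequality extends to locally Lipschitz functions in $L^2$ by a truncation argument. Alternatively, I can avoid this by using subexponential tails of the $1$-Lipschitz coordinate projection $x\mapsto x_1$, which bound all moments of $X_1$ in terms of $C_P$.

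With these two ingredients, the first part of \cref{prop:subexponential-Gaussian-norm} applies directly and gives $\ESD(K_{\PrincipalT})\wkconv\mu_{\MP(c)}$ almost surely for $d=o(n^{1/3})$. The main point to check is that the exponential concentration constant depends only on $C_P$ and not on the dimension, which is the standard form of the theorem. I expect no other obstacle, since all of the difficult moment analysis is contained in \cref{prop:subexponential-Gaussian-norm} and \cref{thm:the-norm-cond}.
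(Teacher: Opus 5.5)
Your proposal is correct and matches the paper's proof: both get subexponential concentration of the $1$-Lipschitz functions $\nm{\mathbf X}_2$ and $\nm{\mathbf X}_4$ from the uniform Poincar\'e constant, then apply the first part of \cref{prop:subexponential-Gaussian-norm}. The only minor variation is your fourth-moment bound $\E X_1^4 \le 1 + 4C_P$, which you get by applying the Poincar\'e inequality to truncations of $x_1^2$; the paper instead uses your stated alternative (subexponential tails of the coordinate projection $x\mapsto x_1$), and both arguments are valid.
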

\begin{prop} \label{prop:log-Sobolev}
    Assume the base vector $\mathbf X^{(n)}$ is isotropic, exchangeable, unconditional, and has uniformly bounded log-Sobolev constant $C$. When $d = o(\sqrt n)$, we then have with probability $1$ that \[\ESD(K_{\PrincipalT}) \wkconv \mu_{\MP(c)}.\]
\end{prop}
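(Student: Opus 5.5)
The plan is to reduce Proposition~\ref{prop:log-Sobolev} to the sub-Gaussian half of Proposition~\ref{prop:subexponential-Gaussian-norm}. That proposition needs four inputs: isotropy, exchangeability and unconditionality (all assumed here); a uniform fourth-moment bound $\sup_n \E X_1^4 < \infty$; subexponential concentration of $\nm{\mathbf X}_2$ and $\nm{\mathbf X}_4$ with uniformly bounded proxy variance; and sub-Gaussian concentration of $\nm{\mathbf X}_4$. Once these are checked, the case $d=o(\sqrt n)$ follows immediately.

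The main tool is Herbst's argument. If $\nu$ satisfies a log-Sobolev inequality with constant $C$, then every $1$-Lipschitz $f\colon\R^n\to\R$ satisfies
\[\E\exp\bigl(\lambda(f(\mathbf X)-\E f(\mathbf X))\bigr)\le \exp(C\lambda^2/2)\]
for all $\lambda\in\R$, up to the normalization convention for $C$.

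First, $\nm{\cdot}_2$ is $1$-Lipschitz with respect to the Euclidean metric. For $\nm{\cdot}_4$ we use $\nm{x}_4\le\nm{x}_2$ and the triangle inequality: $\abs{\nm{x}_4-\nm{y}_4}\le\nm{x-y}_4\le\nm{x-y}_2$, so it is $1$-Lipschitz as well. Hence both norms are sub-Gaussian with proxy variance $2\sigma^2 = C$, uniformly in $n$. Sub-Gaussian trivially implies the subexponential bound for $\lambda^2\le 1/\sigma^2$.

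Second, the coordinate map $x\mapsto x_1$ is $1$-Lipschitz, so $X_1$ is sub-Gaussian with variance proxy $C$, uniformly in $n$. Since $\E X_1=0$ by isotropy, we get $\E X_1^4\le K C^2$ for an absolute constant $K$. This gives $\sup_n\E X_1^4<\infty$.

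With all hypotheses of Proposition~\ref{prop:subexponential-Gaussian-norm} verified, its second part yields $\ESD(K_{\PrincipalT})\wkconv\mu_{\MP(c)}$ almost surely when $d=o(\sqrt n)$.

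The only possible obstacle is a bookkeeping mismatch between the normalization of the log-Sobolev constant and the form $\exp(\lambda^2\sigma^2)$ used in Proposition~\ref{prop:subexponential-Gaussian-norm}. This only changes $\sigma^2$ by a constant factor and is harmless. The real work is inside Proposition~\ref{prop:subexponential-Gaussian-norm}: verifying conditions \ref{cond:2-norm-conv} and \ref{cond:4-norm-bdd} of Theorem~\ref{thm:the-norm-cond} from Gaussian concentration of the norms around means of order $\sqrt n$ and $n^{1/4}$. That is taken as given here.
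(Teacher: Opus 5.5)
Your proposal is correct and is essentially the paper's own argument: the log-Sobolev inequality gives sub-Gaussian (Herbst) concentration for the $1$-Lipschitz maps $\nm{\cdot}_2$, $\nm{\cdot}_4$ and $x\mapsto x_1$, which supplies the norm concentration and the uniform fourth-moment bound needed to invoke the sub-Gaussian half of \cref{prop:subexponential-Gaussian-norm}. Your justification that $\nm{\cdot}_4$ is $1$-Lipschitz, via $\nm{x-y}_4\le\nm{x-y}_2$, is a detail the paper leaves implicit, and it is correct.
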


\textcite{KLS_1995} famously conjectured that all isotropic log-concave measures\footnote{By this we mean measures absolutely continuous on $\R^n$ with densities that are log-concave.} on $\R^n$ satisfies the Poincar\'e inequality with a universal constant independent of the dimension $n$. Assuming the correctness of the conjecture, it follows from \cref{prop:Poincare} that for log-concave measures, when $d = o(n^{1/3})$ we would have a.s.\ $\ESD(K_{\PrincipalT}) \wkconv \mu_{\MP(c)}$. The truth is that, if the KLS conjecture is indeed resolved, we can in fact reach the optimal $d = o(\sqrt n)$ growth condition by using properties particular to unconditional log-concave measures.

The current best bound in the KLS conjecture is due to Klartag \cite{klartag2023logarihtmic}, who showed that the Poincar\'e constant of any isotropic log-concave measures in $\R^n$ is bounded by $c_1\log n$ for some absolute constant $c_1 > 0$. Using this we prove the following result.

\begin{prop} \label{prop:log-concave}
    Assume the base vector $\mathbf X^{(n)}$ is isotropic, exchangeable, unconditional, and log-concave. Then when $d =o(\sqrt{n/\log n})$, we have $\ESD(K_{\PrincipalT}) \wkconv \mu_{\MP(c)}$ with probability $1$.
\end{prop}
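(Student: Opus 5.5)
The plan is to feed \cref{prop:log-concave} into the Poincar\'e-based machinery behind \cref{prop:Poincare} and \cref{prop:subexponential-Gaussian-norm}, but to track the Poincar\'e constant $C_P = C_P(n)$ explicitly rather than treating it as bounded. Klartag's bound gives $C_P \le c_1\log n$. We then verify the conditions of \cref{thm:the-norm-cond} with a sequence $L = L_n$ of order $\log n$, so that the requirement $Ld^2/n\to 0$ becomes exactly $d = o(\sqrt{n/\log n})$. Isotropy gives \cref{thm:the-norm-cond}\ref{cond:isotropic-growing-d} directly.

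For \ref{cond:2-norm-conv}, I would use the unconditional log-concave structure rather than Lipschitz concentration alone. The map $\mathbf x\mapsto\nm{\mathbf x}_2$ is $1$-Lipschitz, so Poincar\'e gives $\Var\nm{\mathbf X}_2\le C_P$. Gromov--Milman / exponential concentration then gives $\nm{\mathbf X}_2-\E\nm{\mathbf X}_2$ subexponential with scale $\sqrt{C_P}\lesssim\sqrt{\log n}$. Better, for unconditional isotropic log-concave vectors the thin-shell variance $\Var\nm{\mathbf X}_2^2$ is $O(n)$ (Klartag's unconditional thin-shell estimate), which makes $\nm{\mathbf X}_2/\sqrt n-1$ subexponential at scale $O(1/\sqrt n)$ with no $\log$ loss. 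Writing $\nm{\mathbf X}_2=\sqrt n(1+Y)$, I then bound $\E(1+Y)^{8d}-1$ by $O(\sum_k (8d)^k k!\,\nm{Y}_{\psi_1}^k)$. This tends to $0$ as soon as $d\,\nm{Y}_{\psi_1}\to0$, which holds even for $d=o(\sqrt n)$. If only the Poincar\'e route is used, the scale is $\sqrt{\log n/n}$ and we need $d\sqrt{\log n/n}\to0$, which is again exactly our hypothesis. Either way \ref{cond:2-norm-conv} follows, and the lower-order discrepancy between $\E\nm{\mathbf X}_2$ and $\sqrt n$ is $O(C_P/\sqrt n)$, which is harmless.

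For the fourth-moment condition I would aim for \ref{cond:4-norm-bdd}. The map $\mathbf x\mapsto\nm{\mathbf x}_4$ is $1$-Lipschitz, and $\E\nm{\mathbf X}_4^4=n\E X_1^4\le Cn$ since one-dimensional marginals of isotropic log-concave measures have bounded fourth moments. Hence $\E\nm{\mathbf X}_4\le (Cn)^{1/4}$. Poincar\'e concentration gives $\nm{\nm{\mathbf X}_4-\E\nm{\mathbf X}_4}_{L^q}\lesssim q\sqrt{C_P}$, and so
\[
\tfrac{1}{n^{2r}}\E\nm{\mathbf X}_4^{8r}\le \tfrac{1}{n^{2r}}\bigl((Cn)^{1/4}+C'r\sqrt{\log n}\bigr)^{8r}.
\]
Since $r\le d=o(\sqrt{n/\log n})$, we have $r\sqrt{\log n}=o(n^{1/2})$. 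However, $n^{1/2}$ dominates $n^{1/4}$, so this is \emph{not} automatically $\le L^{2r}$ with $L\asymp\log n$. I expect this to be the main obstacle.

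To get around it I would use condition \ref{cond:8th-block-mmt} instead, bounding $\E(X_1^8\dotsm X_r^8)$ directly. For unconditional log-concave vectors there is a comparison to the product of independent symmetric exponentials (Bobkov--Nazarov / Lata{\l}a-type moment comparison for unconditional log-concave measures). This gives $\E\prod_{j\le r}X_j^8\le C^{r}\prod_j \E E_j^8 = C_0^{r}$ with an absolute $C_0$, so we may take $L=\sqrt{C_0}$, a constant. If that comparison is only available for products of nonnegative functions of $|X_j|$ in a weaker form, then Hölder together with $\E|X_1|^{8r}\le (Cr)^{8r}$ is insufficient, and the comparison theorem is genuinely needed. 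With \ref{cond:isotropic-growing-d}, \ref{cond:2-norm-conv} and \ref{cond:8th-block-mmt} in hand, \cref{thm:the-norm-cond} yields the almost sure convergence to $\mu_{\MP(c)}$.
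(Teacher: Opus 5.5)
Your proposal is correct and is essentially the paper's argument. You take an \emph{absolute} constant $L$ in condition~\ref{cond:8th-block-mmt}, obtained from the Bobkov--Nazarov domination of unconditional isotropic log-concave vectors by products of exponentials; the paper re-derives a density version of exactly this in \cref{prop:isotropic-lc-8th-block-mmt}. So the $\log n$ loss enters only through condition~\ref{cond:2-norm-conv}, via Klartag's bound $C_P\lesssim\log n$, and your opening sentence about taking $L\asymp\log n$ is not what your final argument does.

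For condition~\ref{cond:2-norm-conv}, the paper plugs $C_P\lesssim\log n$ into Fleury's moment estimate $(\E\nm{\mathbf X}_2^k)^{1/k}\le(1+ckC_P/n)\sqrt n$. Your Poincar\'e route instead feeds subexponential concentration of $\nm{\mathbf X}_2$, with proxy variance $\asymp\log n$, into the exponential trick from the proof of \cref{prop:subexponential-Gaussian-norm}. This works equally well and gives the same range $d=o(\sqrt{n/\log n})$.

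Your alternative ``no $\log$ loss'' route via the unconditional thin-shell estimate is not justified, and you should drop it. A variance bound $\Var\nm{\mathbf X}_2^2=O(n)$ gives no $\psi_1$ control of $\nm{\mathbf X}_2/\sqrt n-1$, which is what you need for moments of order $8d\to\infty$.
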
 

% We remark that the expected full resolution of the famous conjecture by \textcite{KLS_1995} would improve $d = o(\sqrt{n/\log n})$ to $d = o(\sqrt n)$ for the case of log-concave measures. \fc{move earlier, example 2.8 at best gives us n13}

\subsubsection{Invariance under \texorpdfstring{$\ell^k$}{lk} norm}
The conditions of \cref{thm:main} can be directly verified when the base vector $\mathbf X$ follows an $\ell^k$-spherical distribution. Such distributions were studied in \cite{schechtman1990volume,rachev1991approximate,gupta1997lp,szablowski1998uniform}.
To define these distributions, let $0 < k < \infty$. The (standard) $k$-Gaussian random variable is given by the density \[
    x\mapsto \frac{k}{2k^{1/k}\Gamma(1/k)} \exp\biggl(-\frac{\abs{x}^k}{k}\biggr).
\] Notice that the $k$-Gaussian has zero mean and variance $k^{2/k} \frac{\Gamma(3/k)}{\Gamma(1/k)}$, and that taking $k = 2$ would precisely recover the standard Gaussian. We say a random variable follows an $n$-dimensional (standard) \df{$k$-Gaussian distribution} if it has i.i.d.\ components of standard $k$-Gaussian random variables.

Let $R$ be a nonnegative random variable, and write $\mathbf X = (X_1,\dotsc,X_n) \sim \Inv_k(R)$ if \[
    \mathbf X \eqD R\frac{\mathbf Z}{\nm{\mathbf Z}_k},
\] where $\mathbf Z$ is a $k$-Gaussian independent of $R$. Here we define $\nm{(z_1,\dotsc,z_n)}_k = \bigl(\sum_{i=1}^n \abs{z_i}^k\bigr)^{1/k}$ for $0 < k < 1$, although $\nm{\blank}_k$ would no longer satisfy the triangle inequality. In addition we write $(X_1,\dotsc,X_n) \sim \Inv_\infty(R)$ if \[
    \mathbf X \eqD R\frac{\mathbf Z}{\nm{\mathbf Z}_\infty},
\] where $\mathbf Z$ is the uniform measure on the $[-1,1]^n$ cube. Note that if we take $R = 1$ a.s., then the vector $\mathbf X$ is distributed according to the cone measure on the unit $\ell^k$-sphere. Therefore we call $\Inv_k(R)$ the \df{$\ell^k$ spherical distribution with positive radius $R$}.

A main motivation behind the $\Inv_k(R)$ distributions comes from the case $k=2$. It is well-known that every rotationally invariant random vector $\mathbf X$ satisfying $\Pr(\mathbf X \neq 0) = 1$ can be characterized as the product of a radial component $R = \nm{\mathbf X}_2$ and an independent spherical component $\mathbf s = \frac{\mathbf X}{\nm{\mathbf X}_2} \sim \Unif(S^{n-1})$; see for example \cite[Proposition~7.3]{Eaton_1984} or \cite[Theorem~2.5]{Fang_Kotz_Ng_1990}.

The $\Inv_k(R)$ distributions are exchangeable and unconditional while having computable moments. Going back to our tensor power models, the block moments $\E(X_1^4\dotsm X_r^4 X_{r+1}^2\dotsm X_{2d-r}^2)$ appearing in \cref{thm:main} are easy to compute for $\Inv_k(R)$ distributions, thanks to the independence between $\mathbf Z$ and $R$ in the definition.

\begin{prop} \label{prop:lk-general-cond}
Let $d = o(\sqrt n)$. Assume $(X_1,\dotsc,X_n) \sim \Inv_k(R)$ for some fixed $k \in (0,\infty]$. Define \[Q(k) = \begin{cases} \frac{\Gamma(3/k)}{\Gamma(1/k)} \cdot \bigl(\frac{n}{k}\bigr)^{-2/k} & \text{if }k<\infty, \\ 1/3 & \text{if }k=\infty. \end{cases}\] Then as $n \to \infty$, if \begin{enumerate}
    \item ${Q(k)}^d \cdot \E(R^{2d})\to 1$, and \label{cond:expec-lk}
    \item ${Q(k)}^{2d} \cdot \bigl[\E(R^{4d}) - \bigl(\E R^{2d}\bigr)^2\bigr] \to 0$, \label{cond:var-lk}
\end{enumerate}
then the three conditions in \cref{thm:main} are satisfied, and we have $\ESD(K_{\PrincipalT}) \wkconv \mu_{\MP(c)}$ with probability $1$.
\end{prop}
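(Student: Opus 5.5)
The plan is to compute the block moments appearing in \cref{thm:main} exactly, using the representation $\mathbf X \eqD R\,\mathbf Z/\nm{\mathbf Z}_k$ and the classical independence of $\mathbf Z/\nm{\mathbf Z}_k$ and $\nm{\mathbf Z}_k$ for $k$-Gaussian $\mathbf Z$. For a monomial with even exponents $2a_1,\dotsc,2a_j$ and total degree $2D=\sum 2a_i$, independence gives
\[
\E\Bigl(\prod_i Z_i^{2a_i}\Bigr)=\E\Bigl(\prod_i \bigl(Z_i/\nm{\mathbf Z}_k\bigr)^{2a_i}\Bigr)\,\E\nm{\mathbf Z}_k^{2D},
\]
so
\[
\E\Bigl(\prod_i X_i^{2a_i}\Bigr)=\E(R^{2D})\,\frac{\prod_i \E\abs{Z_1}^{2a_i}}{\E\nm{\mathbf Z}_k^{2D}}.
\]
Both factors are explicit. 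First, $\abs{Z_1}^k/k\sim\Gam(1/k)$ gives $\E\abs{Z_1}^{s}=k^{s/k}\Gamma((s+1)/k)/\Gamma(1/k)$. Second, $\nm{\mathbf Z}_k^k/k\sim\Gam(n/k)$ gives $\E\nm{\mathbf Z}_k^{2D}=k^{2D/k}\Gamma((n+2D)/k)/\Gamma(n/k)$. For $k=\infty$ I would instead use that $\mathbf Z/\nm{\mathbf Z}_\infty$ is independent of $\nm{\mathbf Z}_\infty$, which has density $n t^{n-1}$ on $[0,1]$. This gives $\E\nm{\mathbf Z}_\infty^{2D}=n/(n+2D)$ and $\E Z_1^{2a}=1/(2a+1)$.

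\textbf{Conditions \ref{cond:expec} and \ref{cond:var}.} With $a_i=1$ and $D=d$ or $D=2d$, the moment equals $\E(R^{2D})$ times
\[
\Bigl(\tfrac{\Gamma(3/k)}{\Gamma(1/k)}\Bigr)^{D}\frac{\Gamma(n/k)}{\Gamma((n+2D)/k)}.
\]
The key estimate is
\[
\frac{\Gamma(n/k)}{\Gamma((n+2D)/k)}=\Bigl(\frac nk\Bigr)^{-2D/k}\bigl(1+o(1)\bigr)
\]
uniformly for $D\le 2d$. This follows from Stirling, or from the log-convexity bounds $x^{s}\le \Gamma(x+s)/\Gamma(x)\le (x+s)^{s}$ for $s\ge 1$ (and their reverse for $0<s<1$, applied to $\lfloor s\rfloor$ and fractional parts). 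The relative error is $\exp\bigl(O(D^2/n)\bigr)$, which is $o(1)$ exactly because $d=o(\sqrt n)$. The $k=\infty$ case is identical: $(1+2D/n)^{-1}\to1$. Hence the $D=d$ moment is $(1+o(1))\,Q(k)^d\E R^{2d}\to1$, giving \ref{cond:expec}. For \ref{cond:var} I need more care, since I am subtracting two quantities. I would write the difference as
\[
Q^{2d}\bigl[\E R^{4d}-(\E R^{2d})^2\bigr]+Q^{2d}\E R^{4d}\,\bigl[(1+\eta_{2d})-(1+\eta_d)^2\bigr].
\]
The first term tends to $0$ by hypothesis (b). By (a) and (b), $Q^{2d}\E R^{4d}\to1$ is bounded, and both $\eta$'s are $o(1)$, so the second term also tends to $0$. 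Getting the errors $\eta$ to be $o(1)$ uniformly, rather than just for fixed $d$, is where the $O(d^2/n)$ Gamma-ratio bound is essential.

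\textbf{Condition \ref{cond:2nd-4th-compare}.} Take $a_i=2$ for $i\le r$ and $a_i=1$ otherwise, so $D=2d$. The moment equals
\[
(1+o(1))\,Q^{2d}\E R^{4d}\Bigl(\frac{\E\abs{Z_1}^4\,\E\abs{Z_1}^0}{(\E Z_1^2)^2}\Bigr)^{r}
= (1+o(1))\Bigl(\frac{\Gamma(5/k)\Gamma(1/k)}{\Gamma(3/k)^2}\Bigr)^{r}.
\]
This is at most $C_k^r$ for a constant $C_k$ (equal to $9/5$ when $k=\infty$). So \ref{cond:2nd-4th-compare} holds with the constant $L=2C_k$, since the factor $1+o(1)$ is at most $2^r$ for large $n$. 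Then $Ld^2/n\to0$, and \cref{thm:main} applies.

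The main obstacle is the uniform Gamma-function asymptotics when $D\to\infty$ together with $n$. Everything else is bookkeeping, and that estimate is precisely where $d=o(\sqrt n)$ enters.
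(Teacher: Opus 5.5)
Your proposal is correct and follows essentially the paper's route: you get the block-moment formula from the independence of $\mathbf Z/\nm{\mathbf Z}_k$ and $\nm{\mathbf Z}_k$, you use the Gamma-ratio asymptotics $\Gamma(n/k)/\Gamma((n+2D)/k)=(n/k)^{-2D/k}(1+o(1))$ for $D=o(\sqrt n)$, and you take a constant $L$ built from $\Gamma(5/k)\Gamma(1/k)/\Gamma(3/k)^2$ (resp.\ $9/5$ for $k=\infty$). Your treatment of \cref{cond:var} is more careful than the paper's one-line ``respectively''; the only slip is that the exact identity needs a factor $(1+\eta_d)^2$ on the first term, which is harmless since that factor tends to $1$.
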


In particular, take $k = 2$, we directly get the following for rotationally invariant base vectors.

\begin{cor} \label{cor:rot-inv-cond}
    Assume $d = o(\sqrt n)$. Let $\mathbf X\sim \Inv_2(R)$.  If $\frac{1}{n^{d}}\E(R^{2d}) \to 1$ and $\frac{1}{n^{2d}}\Var(R^{2d}) \to 0$, then we have $\ESD(K_{\PrincipalT}) \wkconv \mu_{\MP(c)}$ with probability $1$. % \fc{This is not true for the isotropic multivariate Laplace distribution, because we would have log-convexity.}
\end{cor}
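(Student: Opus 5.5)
The corollary is the specialization $k=2$ of \cref{prop:lk-general-cond}, so the plan is to compute $Q(2)$ and show that the two hypotheses of the corollary are exactly conditions (a) and (b) there. With $k=2$ we get
\[
Q(2) = \frac{\Gamma(3/2)}{\Gamma(1/2)}\Bigl(\frac{n}{2}\Bigr)^{-1} = \frac12\cdot\frac{2}{n} = \frac1n .
\]
This uses $\Gamma(3/2)=\tfrac12\Gamma(1/2)$.

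Condition (a) of \cref{prop:lk-general-cond} reads $Q(2)^d\,\E R^{2d} = n^{-d}\E R^{2d}\to 1$, which is precisely the first hypothesis. Condition (b) reads
\[
Q(2)^{2d}\bigl[\E R^{4d} - (\E R^{2d})^2\bigr] = n^{-2d}\Var(R^{2d}) \to 0,
\]
which is precisely the second hypothesis.

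It remains to check that $\Inv_2(R)$ matches the setting of the proposition, namely $\mathbf X = R\mathbf Z/\nm{\mathbf Z}_2$ with $\mathbf Z$ standard Gaussian independent of $R$. This holds because the $2$-Gaussian density $\frac{2}{2\cdot 2^{1/2}\Gamma(1/2)}e^{-x^2/2}$ is the standard normal density. Applying \cref{prop:lk-general-cond} with $k=2$ and $d=o(\sqrt n)$ then yields almost sure convergence $\ESD(K_{\PrincipalT})\wkconv\mu_{\MP(c)}$.

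There is no real obstacle, since all the analytic work is contained in \cref{prop:lk-general-cond}. The only point needing care is the Gamma-function arithmetic that gives $Q(2)=1/n$ exactly, with no asymptotic correction. This exactness is what makes the normalization by $n^d$ and $n^{2d}$ in the corollary line up with $Q(2)^d$ and $Q(2)^{2d}$.
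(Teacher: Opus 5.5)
Your proposal is correct and matches the paper's argument: the paper obtains this corollary directly as the $k=2$ case of \cref{prop:lk-general-cond}. Your computation $Q(2)=\frac{\Gamma(3/2)}{\Gamma(1/2)}\cdot\frac{2}{n}=\frac{1}{n}$ turns conditions (a) and (b) there exactly into the hypotheses $n^{-d}\E(R^{2d})\to 1$ and $n^{-2d}\Var(R^{2d})\to 0$.
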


% If we take $k \to \infty$, then $\frac{\Gamma(3/k)}{\Gamma(1/k)} \cdot \bigl(\frac{n}{k}\bigr)^{-2/k} \to 1/3$. The following result is hence expected for $\Inv_\infty$ distributions. \dm{Why do we need is separate from 2.12? Seems like we can include $k=\infty$ in the theorem }
% \begin{prop}\label{prop:linfty-general-cond}
%     Assume $d = o(\sqrt n)$, and $(X_1,\dotsc,X_n) \sim \Inv_\infty(R)$. As $n \to \infty$, if \begin{enumerate}
%     \item $(1/3)^d \E(R^{2d})\to 1$, and \label{cond:expec-linfty}
%     \item $(1/3)^{2d} \bigl[\E(R^{4d}) - \bigl(\E R^{2d}\bigr)^2\bigr] \to 0$, \label{cond:var-linfty}
% \end{enumerate} 
% then the three conditions in \cref{thm:main} are satisfied, and we have $\ESD(K_{\PrincipalT}) \wkconv \mu_{\MP}$ with probability $1$.
% \end{prop}

% \dm{Maybe just mention that verbally instead of stating a specific example for spheres. I.e. by taking $R = c$ for some appropriate constant, the theorem applies to spheres.} \fc{say for sphere, very easy to check, and ball easy to check, so left to the reader as an exercise} And after finding the appropriate radius that makes the uniform measure on the $\ell^k$ spheres and balls isotropic, we confirm the following.

Consider the constant \[R = R(k) = \begin{cases}
  \sqrt{\frac{\Gamma(1/k)}{\Gamma(3/k)}\cdot \frac{\Gamma\bigl(\frac{n+2}{k}\bigr)}{\Gamma\bigl(\frac{n}{k}\bigr)}} & \text{if }k<\infty, \\  \sqrt{\frac{3n}{n+2}} & \text{if }k=\infty.
\end{cases} \] that makes the $\Inv_k(R)$ an isotropic distribution on the $\ell^k$ sphere with radius $R$. For this $R$, it is not hard to verify the correctness of \cref{prop:lk-general-cond}\ref{cond:expec-lk}. Moreover, since $R$ is constant, \cref{cond:var-lk} is automatic. A similar computation holds for the random variable $R$ that makes $\Inv_k(R)$ an isotropic distribution uniform on the $\ell^k$ ball with radius $R$. The details are left to the reader.

\begin{exa} \label{exa:ell-k-sph-ball}
    Assume $d = o(\sqrt n)$. For i.i.d.\ samples of $\PrincipalT(n,d)$ generated from the cone measure on the isotropic $\ell^k$ sphere or from the uniform measure on the isotropic $\ell^k$ ball ($0 < k\leq \infty$), we have with probability $1$ that $\ESD(K_{\PrincipalT}) \wkconv \mu_{\MP(c)}$.
\end{exa}

We mention that the $\Inv_2(\sqrt n) = \Unif(\sqrt{n}S^{n-1})$ case was included in \textcite[Remark~2.6]{Yaskov_2023}, which was the only non-independent base vector he considered. \Cref{exa:ell-k-sph-ball} is thus a generalization of this result to measures on $\ell^k$ spheres and balls for any fixed $k$, even when $k < 1$ and the $\ell^k$ ball is no longer convex.

\section{Further related work}
\subsection{MP law for samples with relaxed independence conditions}
As mentioned in the introduction, there has been a substantial amount of work on the MP law with relaxed independence conditions within the columns and between the columns. The most up-to-date results, with thorough discussions on previous literature, can almost entirely be found in \cite{yaskov2025spectra}. We point out a few previous papers that are related to our results. Samples following certain rotationally invariant distributions \cite{Yin_Krishnaiah_1986} and the cone measure on the $\ell^k$ ball \cite{Aubrun_2006} have been studied early on. Famously \textcite{Pajor_Pastur_2009} proved the convergence to MP when the i.i.d.\ samples are \df{good vectors}, in the sense that the sequence of sample vectors $\mathbf x_p \in \R^p$ should satisfy \begin{equation} \label{eq:good}
    \Var(\mathbf x_p^\trp A_p\mathbf x_p) = o(p^2) \quad \text{for } \opnm{A_p} \leq 1.
\end{equation} This in particular contains isotropic log-concave distributions, and hence uniform distributions on convex bodies such as the $\ell^k$ balls. We also mention that this is true when the i.i.d.\ samples satisfy the Poincar\'e inequality with a parameter that is $o(p)$. If we define $g(\mathbf x) = \mathbf x^\trp A \mathbf x$, then applying the Poincar\'e inequality gives us precisely \[\Var g(\mathbf x) \leq o(p)\E \nm{\grad g(\mathbf x)}_2^2 = o(p) \E\nm{A\mathbf x}^2= o(p^2).\]

Most results that establish general conditions on the sample vector $\mathbf x_p$ require a weak concentration of its quadratic form, similar to \eqref{eq:good}. Examples include the often-cited result by \textcite{Bai_Zhou_2008} and the work by \textcite{Adamczak_2013}. The latter established a sufficient condition \eqref{eq:exchangeable-uncond-sufficient} when the sample vector $\mathbf x_p$ is exchangeable and unconditional, which was discussed in \cref{sec:general-cond-exchange-uncond-base}. Over this line of work, the best sufficient conditions on the sample vectors were presented in \cite[Section~5]{yaskov2025spectra}, and will be discussed soon in the \cref{sec:general-suff}.

We point out that matrices with exchangeable entries have also been studied in the spectral analysis of other random matrix models, given it is the natural extension beyond assuming i.i.d.\ entries. To give a few examples, \cite{circular_exchange_2016} contains the circular law for matrices with exchangeable entries, and \cite{semicircle_exchange_2006} discusses the semicircle law for symmetric matrices with exchangeable entries in the upper triangle.

Given a data matrix $\mathbb X \in \R^{p\times m}$, we may also take the MP law as the limiting ESD of the sample covariance matrix $\frac{1}{m}\mathbb X \mathbb X^\trp$. From this perspective, \textcite{fleermann_heiny_CW} assigned the whole data matrix Curie--Weiss entries, which is exchangeable but not unconditional. To be specific, the entries $(X_1,\dotsc,X_{pm})$ have distribution given by \[
    \Pr(X_1 = x_1,\dotsc, X_{pm} = x_{pm}) = \frac{1}{Z_{\beta,pm}} \exp\biggl(\frac{\beta}{2pm}\Bigl(\sum_{j=1}^{pm} x_j\Bigr)^2\biggr) \quad\text{for } x_1,\dotsc,x_{pm} =\pm 1,
\] where $\beta > 0$ and $Z_{\beta,pm}$ is the normalization constant. They showed when $\beta \leq 1$, the ESD converges to MP, and when $\beta >1$, a scaled ESD converges to MP. % Since function classes  % \fc{in fact, according to that paper, it is probably possible to prove a semicircle law when $p/m \to 0$ for our model as well.}

\subsection{The two random tensor models} \label{sec:two-tensor-models}
Below we specialize to the case where the samples follow two distinct random tensor models. We start from the principal tensor model. It was first proven by \textcite{BVZ_2021} that \cref{thm:indep-tensor} holds if $d = o(n^{1/3})$. A more careful calculation by \textcite{Yaskov_2023} yielded the improved $d = o(\sqrt n)$. After that, \textcite{yaskov2025remark} showed that in the special case where $\mathbf X^{(n)}$ has independent complex components identically distributed as $X$ for all $n$, with $\E X = 0$ and $\abs{X} = 1$ a.s.\ (which contains the special case $\Unif\{-1,1\}^{n}$), we can take $\min\{d,n-d\} = o(n)$.
\textcite{diaconu2026empirical} independently proved $d = o(\sqrt n)$ for base vectors $\mathbf X$ under stronger assumptions, and presented the case for samples distributed according to $\PrincipalT(n,d,\mathbf X)$ multiplied by a deterministic PSD matrix. This allows the samples to have a non-identity population covariance.

Some words need to be said about why $d = o(\sqrt n)$ is optimal in general for base vectors $\mathbf X^{(n)}$ with the same i.i.d.\ components for all $n$, as appeared in \cref{thm:indep-tensor}\ref{enu:non-as-iff}. The intuition comes from the seminal work introducing $U$-statistics in \cite{Hoeffding_1948_Ustat}. Roughly speaking, we may define the $U$-statistic \[U_n = U_n(X_1,\dotsc,X_n) =  \frac{\mathbf x^\trp \mathbf x}{p} = \frac{1}{\binom{n}{d}} \sum_{j \in \binom{[n]}{d}} \prod_{\alpha \in j} X_\alpha^2.\] Given $X_1,\dotsc,X_n$ are i.i.d., \cite[Theorem~5.2]{Hoeffding_1948_Ustat} tells us that 
\[
   \frac{d^2}{n}(\E X_1^4 - 1)  = \frac{d^2}{n}\Var(X_1^2)\leq \Var(U_n).
\] Therefore, if $n = O(d^2)$ and $\Pr(\abs{X_1}= 1) < 1$, then $\liminf_n \Var(U_n) > 0$. Therefore $U_n$ does not converge in $L^2$. Now, \cite[Theorem~2.1]{Yaskov_2016_necessary_sufficient} states that when $\E \mathbf x\mathbf x^\trp = I_p$, a necessary condition for the ESD to converge to the standard MP law is for $U_n \to 1$ in probability.

The above observation suggests that it is hard to improve beyond $d = o(\sqrt n)$. \cite[Theorem~2.3 and Theorem~2.4]{Yaskov_2023} essentially made this idea rigorous and proved a weak law of large numbers for our $U$-statistic. From there they established the ``only if'' direction of \cref{thm:indep-tensor}\ref{enu:non-as-iff}, which proves the optimality of $d = o(\sqrt n)$ in general. We mention that ``if and only if'' cannot be proven for our results in the exchangeable setting, since in general $\E \mathbf x\mathbf x^\trp \neq I_p$. % \fc{consider moving after 2.1} % \fc{I think this is still not good enough. Also I feel like moving to the related work section here is an eligible choice.}

A closely related model is the \df{tensor product model} (sometimes called the \df{nonsymmetric random tensor model} in the literature). Instead of taking the tensor product of a single vector with itself $d$ times, we may also take samples to be tensor products $\mathbf X_1 \otimes \dotsb \otimes \mathbf X_d$ of i.i.d.\ random vectors $\mathbf X_1,\dotsc,\mathbf X_d$ in $\C^n$. The model was first studied in \cite{ambainis2012random}, in light of applications in quantum information theory. \citeauthor{ambainis2012random} considered the model when $\mathbf X_1$ is uniformly distributed on the unit circle in $\C^n$ and when $\mathbf X_1$ is distributed according to the complex standard Gaussian. They showed, using the moment method, the convergence in expectation of the ESD to the MP law. Furthermore, they gave concentration bounds on the largest eigenvalue of the sample covariance matrix around the upper edge of the MP law. This result is significantly improved by \textcite{lytova2018central}, who showed that as long as $d = o(n)$ and $\mathbf X_1$ are good vectors as in \eqref{eq:good}, the ESD weakly converges to the MP law almost surely. The tensor product model was also further studied in \cite{collins2022spectral} under a different scaling $d / n \to \gamma$ for some fixed $\gamma >0$. In this setting, \citeauthor{collins2022spectral} showed that the ESD for the sample covariance matrix converges to a different law, and therefore implies the optimality of $d = o(n)$ in \citeauthor{lytova2018central}'s result. In \cite{yaskov2025remark} and \cite{Yuan_2024}, the case where components of $\mathbf X_1$ are a.s.\ on the complex unit circle are analyzed. Note that the aspect ratio for the tensor product model is $n^d / m$ instead of $\binom{n}{d} / m$ (which is our convention for the tensor power models).

\subsection{Relation to analysis of kernel matrices}
The sample covariance matrix of the tensor power models has appeared in the spectral analysis of dot product kernel matrices, when the sample size $m$ is proportional to $n^d$ (i.e., in the polynomial regime). We introduce an example that shows how our models arise naturally. Consider a kernel function given by the power function $f(y) = y^d$, for some $d \in \N$. Then the kernel matrix $K \in \R^{m \times m}$ defined by \[
    K_{jk} = f\biggl(\frac{\inp{\mathbf X_j}{\mathbf X_k}}{n}\biggr)%\quad \text{or}\quad \tilde f\biggl(\frac{\inp{\mathbf X_j}{\mathbf X_k}}{\sqrt n}\biggr)
\] corresponds to our tensor model. This is because for base vectors $\mathbf X_j,\mathbf X_k$ and their $d$-th tensor powers $\mathbf x^{(j)}$ and $\mathbf x^{(k)}$, we have \[\inp{\mathbf X_j}{\mathbf X_k}^d = \inp{\mathbf X_j^{\otimes d}}{\mathbf X_k^{\otimes d}} = \inp{\mathbf x^{(j)}}{\mathbf x^{(k)}},\] where the first inner product is in $\R^n$, and the second and third inner products are in $\R^{n^d}$. Since $\sum_{k=1}^m \mathbf x^{(k)}{\mathbf x^{(k)}}^\trp \in \R^{p\times p}$ has the same spectrum as the Gram matrix $\bigl[\inp{\mathbf x^{(j)}}{\mathbf x^{(k)}}\bigr]_{j,k} \in \R^{m \times m}$, up to some zero eigenvalues, our results about sample covariance matrices can be translated into results about the kernel matrix for power function kernels. One can therefore expect the MP law to play a role when analyzing polynomial kernel matrices, and even more kernel matrices defined by more general kernel functions.

The study of kernel matrix $K$ given by $K_{jk} = f\bigl(\frac{\inp{\mathbf X_j}{\mathbf X_k}}{n}\bigr)$ has direction applications to analyzing the test errors of kernel ridge regression (KRR). To analyze the spectrum of $K$ for data with distribution $\nu$ uniform on the cube $\{-1,1\}^n$ or on the sphere $\sqrt n S^{n-1}$, we can decompose any $f \in L^2(\nu)$ into linear combinations of orthogonal polynomials $\{q_i : i \in \N_0\}$. To analyze the induced kernel matrices $\bigl[q_i \bigl(\frac{\inp{\mathbf X_j}{\mathbf X_k}}{n}\bigr)\bigr]_{j,k}$, \cite[Theorem~2]{misiakiewicz2022spectrum} established \cref{thm:indep-tensor}\ref{enu:hypercube} for fixed $d$. In addition, \cite[Theorem~2]{misiakiewicz2022spectrum} and \cite[Theorem~1]{xiao2022precise} gave the following result. Fix $d$ and the base vector $\mathbf X \sim \Unif(\sqrt n S^{n-1})$, the evaluation of all spherical harmonics of degree $d$ on $\mathbf{X}$ induces an isotropic random vector of dimension $\binom{n+d-1}{d} - \binom{n+d-3}{d-2}$. For i.i.d.\ samples generated this way, the ESD of the sample covariance matrix converges to MP. This result follows from our result for $\PrincipalT\bigl(n,d,\Unif(\sqrt n S^{n-1})\bigr)$, in fact for $d = o(\sqrt n)$. To see this, we observe that up to an orthogonal transformation, $\binom{n}{d}$ number of spherical harmonics are just the principal tensors. Meanwhile $
    \binom{n+d-1}{d} - \binom{n+d-3}{d-2} - \binom{n}{d} = o\bigl(\binom{n}{d}\bigr)
$, and hence the remaining terms in the spherical harmonics are negligible to the limiting ESD. We refer the reader to \cite[Appendix~B]{xiao2022precise}, or the introduction in \cite{Yaskov_2023}, for more details on this approach. 
%Alternatively, this result also follows by applying our results to $\PrincipalT\bigl(n,d,\Unif(\sqrt n S^{n-1})\bigr)$.%; see \cite[Appendix~B]{xiao2022precise} for details or the introduction in \cite{Yaskov_2023} for a brief account.
We also point out the reduced symmetric tensor model from \cref{sec:ext-tensor-power} were used in \cite{pandit2024universality}. Given the sample size $m \asymp n^2$ and under some different data assumptions, the authors analyzed the limiting ESD of kernel matrices, and used it to study the training and test errors of KRR.

% training kernel ridge regression are analyzed in the regime $n^2 \asymp m$, where the data is assumed to have certain moment conditions that match the standard Gaussian distribution. While analyzing the kernel matrix via a second-order approximation,  \cite{pandit2024universality} resorted to the reduced symmetric random tensor model, as defined in \cref{sec:ext-tensor-power}, in the case $d = 2$. \dm{This is very confusing, because in Section 2.4 we had different names. The entire paragraph is a bit confusing.} \fc{In Yaskov's original paper the three symmetric  tensor models are all part of the main discussion, but I feel like the reduced symmetric model is not as important so I combined it into section 2.4. But mentioning it here becomes awkward. Maybe we can be more brief about this paragraph since Zhu et al.\ only used the model for the case $d = 2$ (second-order term).} \fc{be more brief}

One can also define an alternative kernel matrix $\widetilde K_{jk} = f\bigl(\frac{\inp{\mathbf X_j}{\mathbf X_k}}{\sqrt n}\bigr)$, where we normalize by $\sqrt{n}$ instead of by $n$.\footnote{One may also take $f\bigl(\frac{\sqrt n\inp{\mathbf X_j}{\mathbf X_k}}{\nm{\mathbf X_j}\nm{\mathbf X_k}}\bigr)$, which shares some similar results.} Compared with the kernel matrix $K$, the argument $\frac{\inp{\mathbf X_j}{\mathbf X_k}}{\sqrt n}$ is now typically of order $1$ instead of $o(1)$. Therefore the kernel matrix $\widetilde K$ contains information of the kernel function $f$ not just locally around $0$, but also away from $0$. For $\mathbf X \sim \Unif(\sqrt n S^{n-1})$ and $\mathbf X$ having i.i.d.\ components with finite moments, \cite{lu2025equivalence} and \cite{dubova2023universality} considered the normalized kernel matrix with zero diagonal \begin{equation} \label{eq:zero-diag-ker} A = \begin{cases}
   \frac{1}{\sqrt m}\widetilde K_{jk} &\text{if }j\neq k, \\ 0 & \text{if }j=k.
\end{cases}\end{equation} They showed in the polynomial regime $m \asymp n^d$, the ESD of $A$ is asymptotically a free additive convolution between a shifted MP law and a semicircle law. It is clear from \cite[Section~3]{dubova2023universality} that by writing the kernel function $f$ into Hermite basis, we can reduce the study of $A$ into the study of a large sample covariance matrix of principal tensors, which they approached directly via resolvent analysis. Our results in this paper partially recovers \cite[Theorem~2.1]{dubova2023universality} for power function kernel (as discussed at the beginning of this subsection) with more general non-i.i.d.\ distributions. We note that this problem was previously studied in the linear regime $m \asymp n$ in \cite{Cheng_Singer_2013} and \cite{Do_Vu_2013}.

% \dm{Maybe drop this paragraph?}We also mention \cite{kogan2025extremal}, which considered the extreme eigenvalues of the empirical spectrum for the principal tensor model under a different scaling. In particular, for the kernel matrix $A$ defined in \eqref{eq:zero-diag-ker}, they described the asymptotic behavior of $\opnm{A}$ in relation to edge of the limiting spectrum of $A$ in the polynomial regime. This work extends \cite{fan2019spectral}, which considered $\opnm{A}$ in the linear regime.

% \fc{this section requires polishing and perhaps further background and motivation}

% \fc{Hadamard matrix, something needs to be said earlier or here}

% In particular, Theorem~1.2 extreme eigenvalues % \fc{the problem with the paper, the assumption changed from the arXiv version without explanation, and the kernel function is not given any additional assumption; also in the proof of theorem 1.2, they mentioned using CLT, but it does not make sense there at all}

\section{Preliminaries}
\subsection{General sufficient conditions for MP} \label{sec:general-suff}
The proof of \cref{thm:main} is covered in \cref{sec:concentration-proof-of-main}. It hinges on the following sufficient condition for the MP law when the samples are i.i.d.

\begin{thm}[{\cite[Theorem~2.1]{Yaskov_2016_necessary_sufficient}\cite[Theorem~5.2]{yaskov2025spectra}}] \label{thm:MP}
    For each $p \in \N$, let $\mathbf x^{(1)}_p,\dotsc,\mathbf x^{(m)}_p$ be $m = m(p)$ i.i.d.\ random vectors in $\R^p$ (with no assumption on the mean and covariance matrix). Consider the sample covariance matrix $K_{p} = \frac{1}{m} \sum_{k=1}^m\mathbf{x}_p^{(k)}{\mathbf{x}_p^{(k)}}^\trp$. As $p \to \infty$, suppose the aspect ratio $p/m \to c$ for some fixed $c > 0$, and \begin{equation}\frac{\mathbf x^\trp_p A_p \mathbf x_p - \tr A_p}{p} \to 0 \quad \text{in probability} \label{eq:conv-in-prob}\end{equation} for any sequence of positive semidefinite (PSD) matrices $A_p \in \R^{p\times p}$ with $\opnm{A_p} \leq 1$, then \[
        \ESD(K_p) \wkconv \mu_{\MP(c)}
    \] with probability $1$, where $\mu_{\MP(c)}$ is specified in \eqref{eq:MP}.
     % Here $\mu$ is a deterministic distribution, whose Stieltjes transform $s = s_\mu$ is given by the unique solution in $\{s\in \C: -\frac{1-r}{z} + rs \in \C^+\}$ to the equation \begin{equation} \label{eq:aniso-MP}
    %     s(z) = \int_0^\infty \frac{1}{\lambda\bigl(1 - r - r z s(z)\bigr) - z} \,d\rho(\lambda)
    % \end{equation} for all $z \in \C^+$.
\end{thm}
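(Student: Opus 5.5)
The plan is to run the Stieltjes transform method and to use hypothesis \eqref{eq:conv-in-prob} exactly where independence of coordinates is normally used, namely in the concentration of the quadratic forms $\mathbf x^\trp R\,\mathbf x$ around $\tr R$.

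For $z\in\C^+$, set $R(z)=(K_p-z)^{-1}$ and $s_p(z)=\frac1p\tr R(z)$. Write $R_k$ for the resolvent of $K_p-\frac1m\mathbf x^{(k)}{\mathbf x^{(k)}}^\trp$.

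\textbf{Step 1: reduce to expectations.} Replacing one sample is a rank-one perturbation, so it changes $\tr R$ by at most $1/\Im z$ (interlacing). Hence the martingale differences of $s_p(z)$ along the filtration generated by $\mathbf x^{(1)},\dotsc,\mathbf x^{(m)}$ are bounded by $2/(p\Im z)$. Burkholder's inequality then gives $\E\abs{s_p-\E s_p}^4=O(m^2/p^4)=O(p^{-2})$, which is summable. By Borel--Cantelli, $s_p(z)-\E s_p(z)\to0$ almost surely for each $z$ in a countable dense subset of $\C^+$. It therefore suffices to show $\E s_p(z)\to s(z)$, the Stieltjes transform of $\mu_{\MP(c)}$; the standard continuity argument then yields almost sure weak convergence. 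Tightness follows from the limit being a probability measure and the vague-convergence criterion.

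\textbf{Step 2: the resolvent identity.} Start from $\tr(K_pR)=p+z\tr R$. Expanding $K_p$ and using Sherman--Morrison gives
\[
p+zp\,s_p=\sum_{k=1}^m\frac{\frac1m{\mathbf x^{(k)}}^\trp R_k\mathbf x^{(k)}}{1+\frac1m{\mathbf x^{(k)}}^\trp R_k\mathbf x^{(k)}}.
\]
The aim is to replace $\frac1m{\mathbf x^{(k)}}^\trp R_k\mathbf x^{(k)}$ by $\frac pm s_p$ and take expectations. Since $\frac pm s_p\approx c\,\E s_p$, this leads to the approximate equation
\[
1+z\,\E s_p\approx \frac{\E s_p}{1+c\,\E s_p}.
\]
This is the self-consistent equation of $\mu_{\MP(c)}$, and its solution is unique in the appropriate region of $\C^+$. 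So every subsequential limit of $\E s_p$, which exists by a normal families argument, equals $s$.

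\textbf{Step 3 (main obstacle): justifying the replacement.} Three points need care.

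First, hypothesis \eqref{eq:conv-in-prob} is stated for deterministic PSD matrices, whereas $R_k$ is complex, non-Hermitian and random. Write $R_k=\Re R_k+i\Im R_k$ with both parts real symmetric of norm at most $1/\Im z$, and split each into positive and negative parts. This gives four PSD matrices, each of norm at most $1/\Im z$, which can be rescaled to norm at most $1$.

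Second, $R_k$ is random. I will upgrade the hypothesis to the uniform statement
\[
\sup_{A\succeq0,\ \opnm{A}\le1}\Pr\Bigl(\abs{\mathbf x_p^\trp A\mathbf x_p-\tr A}>\epsilon p\Bigr)\to0.
\]
If this failed, one could choose a sequence $A_p$ violating it, contradicting \eqref{eq:conv-in-prob}. Since $\mathbf x^{(k)}$ is independent of $R_k$, conditioning on $R_k$ and applying this uniform bound shows $\frac1p\bigl({\mathbf x^{(k)}}^\trp R_k\mathbf x^{(k)}-\tr R_k\bigr)\to0$ in probability.

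Third, two closing estimates:
\begin{itemize}
\item $\abs{\tr R_k-\tr R}\le1/\Im z$ by the same rank-one bound as in Step 1.
\item The map $w\mapsto w/(1+w)$ is controlled uniformly, because $\Im\bigl(z(1+w)\bigr)\ge\Im z$ gives $\abs{1+w}\ge\Im z/\abs z$. This holds both for $w=\frac1m{\mathbf x^{(k)}}^\trp R_k\mathbf x^{(k)}$ and for $w=\frac pm s_p$, since $K_p$ is PSD.
\end{itemize}
Since each summand $w/(1+w)$ is then bounded, convergence in probability passes to expectations by bounded convergence. Exchangeability of the samples makes all $k$ identical, which completes the replacement in Step 2.
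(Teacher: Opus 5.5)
The paper gives no proof of this theorem. It is quoted from Yaskov and used only as a black box: the authors verify $\Var(\mathbf x^\trp A\mathbf x)=o(p^2)$ and apply Chebyshev. So there is nothing internal to compare against, and your proposal has to stand on its own.

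It does: it is a correct, self-contained resolvent proof. The following steps all check out.
\begin{enumerate}
\item Burkholder plus Borel--Cantelli gives $s_p-\E s_p\to0$.
\item The Sherman--Morrison identity $1+zs_p=\frac1p\sum_k w_k/(1+w_k)$ holds.
\item The hypothesis upgrades, by the subsequence argument, to a bound uniform over deterministic PSD $A$ with $\opnm{A}\le 1$. You can then condition on $R_k$, which is independent of $\mathbf x^{(k)}$.
\item The splitting $R_k=P_+-P_-+i\Im R_k$ into real symmetric PSD pieces of norm at most $1/\Im z$ is valid, because $R_k$ is complex symmetric and $\mathbf x^{(k)}$ is real.
\item The bound $\abs{1+w}\ge \Im z/\abs z$ follows from $K_p\succeq 0$.
\end{enumerate}
Your limiting relation $1+zs=s/(1+cs)$ is equivalent to $czs^2+(z+c-1)s+1=0$, which is indeed the equation of $\mu_{\MP(c)}$.

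Two points should be tightened.
\begin{enumerate}
\item The martingale differences are controlled by \emph{removing} the $k$th sample, a rank-one change, not by replacing it, which is rank two. Removal gives $\abs{(\E_k-\E_{k-1})(\tr R-\tr R_k)}\le 2/\Im z$, which is the bound you state.
\item The phrase ``unique in the appropriate region'' can be made explicit. The two roots have product $1/(cz)$. Where $(\Im z)^2>c\abs z$, at most one root satisfies $\abs{s}\le 1/\Im z$. This forces any subsequential limit of $\E s_p$ to equal $s_{\MP}$ on that open set, and hence on all of $\C^+$ by the identity theorem.
\end{enumerate}

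A variant worth knowing fits the PSD form of the hypothesis more directly: work at $z=-t$ with $t>0$.
\begin{enumerate}
\item $R_k=(K_k+t)^{-1}$ is itself a real PSD matrix with $\opnm{R_k}\le 1/t$, so no splitting into four pieces is needed.
\item $w_k\ge 0$, so $w\mapsto w/(1+w)$ is $1$-Lipschitz and no lower bound on denominators is required.
\item The limit equation becomes $cts^2+(t+1-c)s-1=0$. Its roots have product $-1/(ct)<0$, so there is exactly one positive root, and root selection is trivial.
\item All ESDs live on $[0,\infty)$, and their Stieltjes transforms are locally bounded on $\C\setminus[0,\infty)$. Vitali's theorem therefore transfers a.s.\ convergence at rational $t>0$ to locally uniform convergence on all of $\C\setminus[0,\infty)$, including $\C^+$.
\end{enumerate}
Your $\C^+$ route costs a little more bookkeeping but is the textbook one and is equally valid.
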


% The distribution $\mu$ here is called the \df{Marchenko--Pastur law}. In the case where $\ESD(\Sigma_p) \wkconv \delta_c$, the Dirac point mass at some $c > 0$, then \eqref{eq:aniso-MP} becomes \[
%     s(z) = \frac{1}{c\bigl(1 - r - r z s(z)\bigr) - z}.
% \] This can then be converted into the quadratic equation \[
%     crz[s(z)]^2 + (z + cr - c)s(z) + 1 = 0.
% \]
% Because Stieltjes transform can only take value in $\C^+$, the unique solution can be written out explicitly: \[s(z) = \frac{(1 - r)c - z + \sqrt{[z - (1+r)c]^2 - 4rc^2}}{2rcz}.\] Here the square root of the complex number is taken to be the one with positive imaginary part \dm{Somewhere maybe it would be helpful to say a sentence about the Stieltjes transform}. By \cite[Lemma~3.11]{Bai_Silverstein_2010}, this can then be identified with the measure where $a = c(1 - \sqrt{r})^2$ and $b = c(1 + \sqrt r)^2$, and $(1-\frac{1}{r})^+ = \max\{0,1-\frac{1}{r}\}$.
% We will call this the $c$-MP law, and when $c = 1$, we will just write $\mu_{\mathrm{MP}}$ and call it \emph{the} MP law. The aspect ratio $r$ is usually implicit, and hence omitted.

The most significant part of the above theorem is that there is no moment assumption on the vectors except for the concentration in probability of the quadratic form in \eqref{eq:conv-in-prob}.

% Here is a necessary condition for the ESD of the sample covariance matrix to converge to the Marchenko--Pastur law.
% \begin{thm} \label{thm:necessary-MP}
%     Assume $\E \mathbf x\mathbf x^\trp = I_p$, and for the sample covariance matrix $K_p = \frac{1}{m}\sum_{k=1}^m\mathbf{x}^{(k)}{\mathbf{x}^{(k)}}^\trp$, $\ESD(K_p) \wkconv \mu_{\mathrm{MP}}$ with probability $1$. Then it must hold that $\frac{\mathbf x^\trp \mathbf x}{p} \to 1$ in probability.
% \end{thm}

More generally, we can assign each of the $m$ samples a random weight $R_k$. Note that the theorem below is not the exact same statement as \cite[Theorem~5.14]{yaskov2025spectra}, but by a more straightforward application of \cite[Theorem~4.2]{yaskov2025spectra}, one can obtain the following. Be aware that we now require assumptions on the population covariance.

\begin{thm} \label{thm:weighted_MP}
    For each $p \in \N$, let $\mathbf x^{(1)}_p,\dotsc,\mathbf x^{(m)}_p$ be $m = m(p)$ i.i.d.\ random vectors in $\R^p$, with $\Sigma_p = \E \mathbf x\mathbf x^\trp$ satisfying $\tr \Sigma_p^2= o(p^2)$, and also \begin{equation} \label{eq:anisotropic-WCP}
    \frac{\mathbf x_p^\trp A_p \mathbf x_p - \tr(\Sigma_pA_p)}{p} \to 0\quad\text{in probability}\end{equation} for any sequence of PSD matrices $A_p \in \R^{p\times p}$ with $\opnm{A_p} \leq 1$. Assume $\ESD(\Sigma_p) \wkconv \rho$ for some deterministic measure $\rho$. Assume in addition for each $p$ and $m = m(p)$, we have a diagonal matrix $T_p = \diag\{R_1,\dotsc,R_{m}\}$, with nonnegative entries and independent of $\bigl\{\mathbf x_p^{(k)}\bigr\}_{k=1}^m$, and $\ESD(T_p) \wkconv \tau$ for some deterministic measure $\tau$ with probability $1$. Then, assuming $p / m \to c$ for some fixed $c > 0$, the weighted sample covariance matrix $\widetilde K_{p} = \frac{1}{m} \sum_{k=1}^m R_k \mathbf{x}_p^{(k)} {\mathbf{x}_p^{(k)}}^\trp$ converges weakly with probability $1$ to some deterministic measure $\mu$, which is uniquely determined by its Stieltjes transform \[
        s_\mu(z) = \int_0^\infty \frac{1}{\lambda \int_0^\infty \frac{x}{1 + c x \tilde s(z)}\,d\tau(x) - z }\,d\rho(\lambda)\quad \text{for }z\in \C^+,
    \] where $\tilde s$ is defined by \[
        \tilde s(z) = \int_0^\infty \frac{\lambda}{\lambda \int_0^\infty \frac{x}{1 + c x \tilde s(z)}\,d\tau(x) - z }\,d\rho(\lambda) \quad \text{for }z\in \C^+.
    \]
\end{thm}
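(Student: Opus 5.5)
The plan is to prove this by the Stieltjes transform method, conditioning on the weights throughout. Since $T_p$ is independent of the samples, I would work conditionally on $(R_k)_k$ and treat them as deterministic nonnegative numbers with $\ESD(T_p)\wkconv\tau$. The target is then \cref{thm:weighted_MP} with deterministic weights. Almost sure convergence for random $T_p$ follows by Fubini, since the event $\ESD(T_p)\wkconv\tau$ has probability one.

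The first step is to reduce to a convergence in expectation. Let $G(z)=(\widetilde K_p-z)^{-1}$ and $s_p(z)=\frac1p\tr G(z)$. Changing a single sample $\mathbf x^{(k)}$ is a rank-one perturbation of $\widetilde K_p$, so it moves $s_p(z)$ by at most $2/(p\,\Im z)$. McDiarmid's inequality then gives $\Pr(\abs{s_p-\E s_p}>t)\le 2e^{-c p^2 t^2 (\Im z)^2/m}$. This is summable because $m\asymp p$, so Borel--Cantelli gives $s_p-\E s_p\to0$ a.s. for each $z$. A countable dense set of $z$ together with Vitali/tightness then upgrades this to weak convergence of the ESD, once $\E s_p(z)\to s_\mu(z)$ is shown.

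The second step derives the approximate self-consistent equations. Write $G_k$ for the resolvent with sample $k$ removed. By Sherman--Morrison,
\[
G\mathbf x^{(k)}=\frac{G_k\mathbf x^{(k)}}{1+\tfrac{R_k}{m}{\mathbf x^{(k)}}^\trp G_k\mathbf x^{(k)}}.
\]
Combined with the identity $I+zG=\frac1m\sum_k R_k G\mathbf x^{(k)}{\mathbf x^{(k)}}^\trp$, this expresses $G$ through the quadratic forms $\frac1m{\mathbf x^{(k)}}^\trp G_k\mathbf x^{(k)}$.

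The key replacement is
\[
\tfrac1m {\mathbf x^{(k)}}^\trp G_k \mathbf x^{(k)}\approx \tfrac1m\tr(\Sigma_p G_k)\approx c\,\tilde s_p(z), \qquad \tilde s_p(z):=\tfrac1p\tr(\Sigma_p G).
\]
It comes from \eqref{eq:anisotropic-WCP} applied to $A=G_k$, which is independent of $\mathbf x^{(k)}$. Since $G_k$ is complex and non-Hermitian, I would write $\Re G_k$ and $\Im G_k$ (each symmetric, with norm at most $1/\Im z$) as differences of their positive and negative parts. Each part is PSD of norm at most $1/\Im z$, so \eqref{eq:anisotropic-WCP} applies after rescaling and conditioning on $G_k$. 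A uniform-integrability argument on the event where the denominators $1+\frac{R_k}{m}{\mathbf x^{(k)}}^\trp G_k\mathbf x^{(k)}$ stay away from $0$ then passes to expectations. The bound $\abs{\frac{R}{1+R\,w}}\le \abs{z}/\Im z$ for $\Im w\ge0$ (standard) controls these denominators uniformly in $R_k$. The change $G_k\to G$ costs $O(1/p)$ in normalized trace.

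Averaging over $k$ replaces $\frac1m\sum_k \frac{R_k}{1+cR_k\tilde s_p}$ by $\int\frac{x}{1+cx\tilde s_p}\,d\tau$. This gives
\[
\E G\approx\Bigl(\Sigma_p\textstyle\int\frac{x\,d\tau(x)}{1+cx\,\E\tilde s_p}-z\Bigr)^{-1}.
\]
Taking $\frac1p\tr$ and $\frac1p\tr(\Sigma_p\,\cdot)$, and using $\ESD(\Sigma_p)\wkconv\rho$, yields the two displayed equations up to $o(1)$ errors.

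The hypothesis $\tr\Sigma_p^2=o(p^2)$ enters twice. It controls the variance of $\tilde s_p$ (equivalently, it bounds the Efron--Stein/martingale fluctuations of $\frac1p\tr(\Sigma_p G)$). It also keeps $\frac1p\tr\Sigma_p$ bounded and the integral against $\rho$ meaningful, via tightness of $\lambda\,d\rho$.

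The last step, which I expect to be the main obstacle, is to pass to limits and prove that the limiting fixed-point equation for $\tilde s$ has a unique solution in the correct half-plane. I would take subsequential limits of $(\E s_p,\E\tilde s_p)$, which are bounded analytic functions (Montel's theorem). Uniqueness would come from the standard contraction argument: subtracting two solutions and using $\Im$-monotonicity gives a factor strictly less than $1$ for $\Im z$ large. Analytic continuation then extends uniqueness to all of $\C^+$. Every subsequential limit must then be the stated $s_\mu$, which finishes the proof.
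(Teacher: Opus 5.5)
The paper does not prove this statement; it takes it from \cite[Theorem~4.2]{yaskov2025spectra}. Your direct resolvent argument, in the style of Bai--Zhou \cite{Bai_Zhou_2008}, is therefore a different, self-contained route with the standard skeleton. The difficulty in this version is that nothing is bounded: $\tau$ is arbitrary, $\opnm{\Sigma_p}$ may diverge, and \eqref{eq:anisotropic-WCP} holds only in probability, with no moments. Both places where your sketch claims uniform control are false.

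\emph{The weight bound.} The inequality $\abs{\frac{R}{1+Rw}}\le\abs{z}/\Im z$ fails for $\Im w\ge 0$: take $w=0$ (which happens when $\mathbf x^{(k)}=0$) and $R$ large. What is true is $\abs{1+Rw}^{-1}\le\abs{z}/\Im z$ for $R\ge0$ and $w=\frac1m{\mathbf x^{(k)}}^\trp G_k\mathbf x^{(k)}$, since $\Im\bigl(z(1+Rw)\bigr)\ge\Im z$ when $\widetilde K_k$ is PSD. So $R_k/(1+R_kw_k)$ has no uniform bound. Moreover, the numerators $\frac1m{\mathbf x^{(k)}}^\trp G_kM\mathbf x^{(k)}$ that appear when you compare $\E G$ with the deterministic equivalent are not known to be integrable at all. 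Your ``uniform-integrability argument'' therefore has nothing to rest on.

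\emph{The covariance bound.} The condition $\tr\Sigma_p^2=o(p^2)$ does not keep $\frac1p\tr\Sigma_p$ bounded; Cauchy--Schwarz gives only $o(\sqrt p)$. Take $\Sigma_p=\diag\bigl(p^{0.7}I_{\lfloor\sqrt p\rfloor},I_{p-\lfloor\sqrt p\rfloor}\bigr)$ with Gaussian samples. Every hypothesis holds, since $\Var(\mathbf x^\trp A\mathbf x)=2\tr(A\Sigma_pA\Sigma_p)\le 2\tr\Sigma_p^2$ for $\opnm{A}\le 1$ and $\ESD(\Sigma_p)\wkconv\delta_1$. Yet $\frac1p\tr\Sigma_p\sim p^{0.2}$.
\begin{itemize}
\item The only a priori bound, $\abs{\tilde s_p}\le\tr\Sigma_p/(p\Im z)$, blows up, so your Montel step for $\E\tilde s_p$ is unjustified.
\item Your concentration claim for $\tilde s_p$ fails for the same reason. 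Replacing one sample moves $\tilde s_p$ by up to $2\opnm{\Sigma_p}/(p\Im z)$, so Efron--Stein gives only $\Var(\tilde s_p)=O(\opnm{\Sigma_p}^2/p)$, of order $p^{0.4}$ here. Without concentration you cannot replace $\tilde s_p$ by $\E\tilde s_p$ inside $x\mapsto\frac{x}{1+cx\tilde s_p}$.
\item What the hypothesis actually gives is $\opnm{\Sigma_p}\le(\tr\Sigma_p^2)^{1/2}=o(p)$. Hence the leave-one-out change in $\tilde s_p$ is $o(1)$, not $O(1/p)$.
\end{itemize}

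\emph{The fix.} The missing idea is truncation via the rank inequality $\sup_x\abs{F^{A}(x)-F^{B}(x)}\le\frac1p\operatorname{rank}(A-B)$, where $F^A$ is the distribution function of $\ESD(A)$.
\begin{enumerate}
\item Replace $R_k$ by $R_k\ind\{R_k\le C\}$. The rank fraction lost tends to $\tau((C,\infty))/c$.
\item Replace $\mathbf x$ by $P_C\mathbf x$, where $P_C$ is the spectral projection of $\Sigma_p$ onto $[0,C]$. This costs at most $2\ESD(\Sigma_p)((C,\infty))$. Also $P_C\mathbf x$ still satisfies \eqref{eq:anisotropic-WCP} with $P_C\Sigma_pP_C$, because $P_CAP_C$ is PSD of norm at most $1$.
\item Zero out the samples with $\nm{P_C\mathbf x^{(k)}}_2^2>(C+1)p$. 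By \eqref{eq:anisotropic-WCP} with $A=I$, their number is binomial with vanishing proportion almost surely. Keep $P_C\Sigma_pP_C$ as the parameter in \eqref{eq:anisotropic-WCP}; your argument never uses that it is the true covariance.
\end{enumerate}
After these reductions every quantity is bounded, McDiarmid applies to $\tilde s_p$ as well, and bounded convergence replaces the UI step.

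You still owe the limit $C\to\infty$. The truncated limits $\mu_C$ are Cauchy in L\'evy distance by the same rank bounds. You must then pass to the limit in the fixed-point system, with $\rho_C\to\rho$ and $\tau_C\to\tau$, to identify the limit as $\mu$. This is the machinery the paper avoids by citing Yaskov.
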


We remark that very often in applications, it is easier to prove $\Var(\mathbf x^\trp A\mathbf x) = o(p^2)$ for $\opnm{A_p} \leq 1$. Chebyshev's inequality then implies \eqref{eq:anisotropic-WCP}. When $\mathbf x$ is isotropic, $\tr(\Sigma A) = \tr A$, and therefore we recover condition \eqref{eq:conv-in-prob} in \cref{thm:MP}. Indeed, proving $\Var(\mathbf x^\trp A\mathbf x) = o(p^2)$ was how \cite{Yaskov_2023} (and \cite{BVZ_2021}) approached \cref{thm:indep-tensor} for the base vectors having independent components. We adapt this argument to any exchangeable and unconditional base vector.

Returning to the most important case $\rho = \delta_1$ in \cref{thm:weighted_MP}, the equation for $s_\mu$ simplifies to \[
    s_\mu (z) = \frac{1}{\int_0^\infty \frac{x}{1 + cx s_\mu(z)}\,d\tau(x) - z}.
\]
We are interested in the case where $R_1,\dotsc,R_m$ are i.i.d.\ according to some measure $\tau$, so that the distributions between columns remain independent. Then with probability $1$, the empirical distribution of $R_1,\dotsc,R_m$ (or equivalently, the ESD of $T_p$) must converge weakly to $\tau$ as $p \to \infty$. (This is a consequence of the Glivenko--Cantelli theorem.) 

The proof of \cref{thm:weighted-sample} now follows from \cref{thm:indep-tensor} and \cref{thm:main}.

\begin{proof}[Proof of \cref{thm:weighted-sample}]
    For the population covariance matrix $\Sigma_p = \E \mathbf x\mathbf x^\trp$, we need to check that $\tr \Sigma_p^2 = o(p^2)$, $\ESD(\Sigma_p)\wkconv \delta_1$ with probability $1$, and condition \eqref{eq:anisotropic-WCP}.
    
    First, $\Sigma_p$ is always zero off the diagonal: For indices $i \neq j$ in $\binom{[n]}{d}$, we always have $\E(x_i x_j) = 0$, since there is at least one $X_\alpha$ with $\alpha \in i - j$. We can then use the mean zero independence assumption, or the unconditional assumption.
    
    Meanwhile, on the diagonal of $\Sigma_p$ each entry is $\E(x_1^2) = \E(X_1^2)\dotsm \E(X_d^2) = 1$ by the independence and isotropicity assumption; alternatively by the exchangeability assumption in \cref{thm:main}, $\E(x_i^2) = \E(X_1^2\dotsm X_d^2) \to 1$. Now $\Sigma_p = \E(x_i^2) I_p$, which implies $\ESD(\Sigma_p) \wkconv \delta_1$ and $\tr \Sigma_p^2 = O(p) = o(p^2)$. 
    
    Finally, condition \eqref{eq:anisotropic-WCP} holds. In the proof of \cref{thm:indep-tensor} in \cite{Yaskov_2023}, it was verified that $\Var(\mathbf x^\trp A\mathbf x) = o(p^2)$. We will show this as well in the proof of \cref{thm:main}. As commented before the proof, this directly implies \eqref{eq:anisotropic-WCP}.
\end{proof}

\subsection{Properties of exchangeable distributions}
For product of powers of a finite collection of exchangeable random variables, we have the following consequence of Muirhead's inequality.

\begin{lem}[{\cite[G.2.h]{Marshall_Olkin_Arnolad_2011}}] \label{lem:exchangeable-Schur-cvx}
    If $Y_1,\dotsc,Y_n$ are exchangeable and nonnegative, then \[
        (\alpha_1,\dotsc,\alpha_n) \mapsto \E(Y_1^{\alpha_1}\dotsm Y_n^{\alpha_n})
    \] is Schur-convex over nonnegative exponents  $(\alpha_1,\dotsc,\alpha_n)$ such that the expectation is finite.
\end{lem}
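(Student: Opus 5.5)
The plan is to reduce Schur-convexity to a two-coordinate statement via the Hardy--Littlewood--P\'olya characterization of majorization, and then prove the two-coordinate statement using exchangeability together with pointwise convexity in the exponent. Write $f(\alpha) = \E(Y_1^{\alpha_1}\dotsm Y_n^{\alpha_n})$, with the convention $0^0 = 1$.

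\emph{Step 1: symmetry.} If $\sigma$ is a permutation, exchangeability gives $(Y_{\sigma(1)},\dotsc,Y_{\sigma(n)}) \eqD (Y_1,\dotsc,Y_n)$. Hence $f(\alpha_{\sigma(1)},\dotsc,\alpha_{\sigma(n)}) = f(\alpha)$, so $f$ is symmetric, and its finiteness domain is permutation invariant.

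\emph{Step 2: reduction to $T$-transforms.} Recall that $\alpha \prec \beta$ holds if and only if $\alpha$ is obtained from $\beta$ by finitely many $T$-transforms. A $T$-transform replaces two coordinates $(\beta_i,\beta_j)$ by their convex combination $\lambda(\beta_i,\beta_j) + (1-\lambda)(\beta_j,\beta_i)$ and leaves the other coordinates fixed. By symmetry it therefore suffices to take $i=1$, $j=2$ and show $f(\alpha) \le f(\beta)$ for one such transform.

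Fix the remaining exponents and set $W = \prod_{k\ge 3} Y_k^{\beta_k} \ge 0$. Put $s = (\beta_1+\beta_2)/2$ and write the first two exponents as $(s+t, s-t)$ with $\abs{t} \le s$. Define
\[
g(t) = \E\bigl(W\, Y_1^{s+t} Y_2^{s-t}\bigr).
\]
The $T$-transform sends $t_0 = (\beta_1-\beta_2)/2$ to $(2\lambda-1)t_0$, which has smaller absolute value. So it is enough to show that $g$ is even and nondecreasing in $\abs{t}$ on the set where it is finite.

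\emph{Step 3: evenness and convexity of $g$.} Swapping $Y_1$ and $Y_2$ leaves $W$ unchanged, so exchangeability gives $g(t) = g(-t)$.

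For convexity, argue pointwise. When $Y_1,Y_2>0$, we have $Y_1^{s+t}Y_2^{s-t} = (Y_1Y_2)^s \exp\bigl(t\log(Y_1/Y_2)\bigr)$, which is convex in $t$.

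The main care is needed when $Y_1 = 0$ or $Y_2 = 0$, together with the finiteness domain.
\begin{itemize}
\item If, say, $Y_1=0$, then $t\mapsto Y_1^{s+t}Y_2^{s-t}$ vanishes on $(-s,s]$ and equals $Y_2^{2s}\ge 0$ at $t=-s$. A function that is $0$ on an interval and jumps up at an endpoint is still convex on $[-s,s]$. The case $Y_2 = 0$ is symmetric.
\item Multiplying by $W\ge0$ and taking expectations preserves convexity.
\item Finiteness at $\pm t_0$ implies finiteness on $[-t_0,t_0]$. This follows from H\"older's inequality, or simply from the pointwise convexity bound $g(t) \le \max\{g(t_0), g(-t_0)\}$ obtained by integrating. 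So $g$ is a finite convex function on that interval.
\end{itemize}

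\emph{Step 4: conclusion.} An even convex function on a symmetric interval is minimized at $0$ and is nondecreasing in $\abs{t}$. Indeed, for $\abs{t}\le t_0$ write $t$ as a convex combination of $\pm t_0$; then $g(t)\le g(t_0)$. Hence $f(\alpha) = g((2\lambda-1)t_0) \le g(t_0) = f(\beta)$.

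Iterating over the finitely many $T$-transforms in Step 2 gives $f(\alpha)\le f(\beta)$ whenever $\alpha\prec\beta$ and $f(\beta)<\infty$. Finiteness propagates along the chain by Step 3, so every intermediate value is finite. This is exactly Schur-convexity.

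The only delicate points are the zero-value convention and the finiteness bookkeeping, both handled in Step 3.
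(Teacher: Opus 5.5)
Your proof is correct. The paper does not prove this lemma itself: it cites Marshall--Olkin--Arnold G.2.h and describes the result as a consequence of Muirhead's inequality.

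That route goes as follows. By exchangeability, $\E\prod_i Y_i^{\alpha_i} = \frac{1}{n!}\E\sum_{\sigma}\prod_i Y_{\sigma(i)}^{\alpha_i}$. Muirhead's theorem, applied to each realization (extended to zero entries by continuity, using $0^0=1$), makes the symmetrized sum Schur-convex in $\alpha$. Integrating preserves this.

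You never symmetrize pointwise. Exchangeability enters only after taking expectations, to make $g$ even, while convexity of $g$ comes from the pointwise convexity of $t\mapsto Y_1^{s+t}Y_2^{s-t}$. Your Steps 2--4 are then an inline proof of the standard fact that symmetric convex functions are Schur-convex. This is also essentially how Muirhead's inequality itself is proved, so the two routes are close in substance.

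The citation route is a one-liner once Muirhead is granted. Yours is self-contained, and it makes explicit two points that the citation glosses over but the paper actually uses:
\begin{itemize}
\item The convention $0^0=1$, which is what lets exponent vectors of different support lengths, such as $(4,\dotsc,4,2,\dotsc,2,0,\dotsc,0)$, be compared.
\item The fact that finiteness of $f(\beta)$ forces finiteness of $f(\alpha)$ whenever $\alpha\prec\beta$. This makes bounds like \eqref{eq:moment-cvx} meaningful without a separate integrability check on the smaller moment.
\end{itemize}
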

In the context of \cref{thm:main}, since $X_1^2,\dotsc,X_n^2$ are exchangeable and nonnegative, we have for any $1 \leq r \leq d$, 
    \begin{equation}
        \E(X_1^2\dotsm X_{2d}^2) \leq \E(X_1^4\dotsm X_r^4 X_{r+1}^2\dotsm X_{2d - r}^2) \leq \E(X_1^4\dotsm X_{d}^4). \label{eq:moment-cvx}
    \end{equation}
In addition, we also have the inequality \[
    \E(X_1^2\dotsm X_{2d}^2) \leq \frac{1}{n^{2d}} \E\nm{\mathbf X}_2^{4d},
\] which can be seen by expanding the right-hand side. This inequality will become important when relaxing the three conditions of \cref{thm:main}.

Proving \cref{thm:finite-case} also relies on approximating the exchangeable measure $\nu$ by a mixture of product measures, as covered in \cref{sec:finite-d}. The idea comes from the following theorem by \textcite{Diaconis_Freedman_1980}. Let $\mathcal P(\R)$ denote the space of Borel probability measures on $\R$.
\begin{thm}[{\cite[Theorem~13]{Diaconis_Freedman_1980}}] \label{thm:DF}
    Let $\nu$ be an exchangeable distribution on $\R^n$. Then there exists a probability distribution $w$ on $\mathcal P(\R)$ such that for any $k \leq n$,\[
        \bigl\lVert{(p_k)_*\nu - \otimes^k \theta \,dw(\theta)}\bigr\lVert \leq \frac{k(k-1)}{n};
    \] Here $(p_k)_* \nu$ is the projection of $\nu$ onto the first $k$ coordinates, and $\nm{\mu_1 -\mu_2} = 2 \sup_A \mu_1(A) - \mu_2(A)$ over all measurable subsets $A$.
\end{thm}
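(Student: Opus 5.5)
We plan to prove \cref{thm:DF} by reducing it to the special case of ``urn'' distributions and then mixing, with the mixing measure $w$ taken to be the law of the empirical measure of a sample from $\nu$.

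\textbf{Step 1 (urn decomposition).} Let $\mathbf X\sim\nu$ and let $\sigma$ be a uniform random permutation of $[n]$, independent of $\mathbf X$. By exchangeability,
\[
(X_{\sigma(1)},\dotsc,X_{\sigma(n)})\eqD \mathbf X .
\]
Conditionally on $\mathbf X=\mathbf x$, the vector $(x_{\sigma(1)},\dotsc,x_{\sigma(k)})$ is a sample of size $k$ drawn \emph{without replacement} from the urn $\{x_1,\dotsc,x_n\}$. Write $U_{\mathbf x,k}$ for its law. Then
\[
(p_k)_*\nu=\int U_{\mathbf x,k}\,d\nu(\mathbf x).
\]
Next set $\theta_{\mathbf x}=\frac1n\sum_{j}\delta_{x_j}\in\mathcal P(\R)$. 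The product measure $\otimes^k\theta_{\mathbf x}$ is the law of a size-$k$ sample drawn \emph{with replacement} from the same urn. Define $w$ as the pushforward of $\nu$ under $\mathbf x\mapsto\theta_{\mathbf x}$; this map is measurable. Then
\[
\otimes^k\theta\,dw(\theta)=\int\otimes^k\theta_{\mathbf x}\,d\nu(\mathbf x).
\]
Note that $w$ does not depend on $k$, as the statement requires.

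\textbf{Step 2 (fixed urn).} We show $\nm{U_{\mathbf x,k}-\otimes^k\theta_{\mathbf x}}\le k(k-1)/n$ for every $\mathbf x$. Let $I_1,\dotsc,I_k$ be i.i.d.\ uniform on $[n]$. Then $(x_{I_1},\dotsc,x_{I_k})\sim\otimes^k\theta_{\mathbf x}$. Let $D$ be the event that the $I_j$ are distinct. Conditionally on $D$, the tuple $(I_1,\dotsc,I_k)$ is uniform over injections $[k]\to[n]$, so its image under $\mathbf x$ has law $U_{\mathbf x,k}$.

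Let $\mathcal L$ be the law of the index tuple and $\mathcal L_D=\mathcal L(\,\cdot\giv D)$. The elementary bound $\nm{\mathcal L-\mathcal L(\,\cdot\giv D)}\le 2\Pr(D^\cpl)$ holds with the paper's normalization $\nm{\mu_1-\mu_2}=2\sup_A(\mu_1(A)-\mu_2(A))$. To see it, note that for any set $A$,
\[
\mathcal L_D(A)-\mathcal L(A)\le \frac{\Pr(A\cap D)}{\Pr(D)}-\Pr(A\cap D)\le \Pr(D^\cpl),
\]
and the other direction is handled in the same way. Pushing forward by the map $(i_1,\dotsc,i_k)\mapsto(x_{i_1},\dotsc,x_{i_k})$ does not increase total variation. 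A union bound over pairs gives
\[
\Pr(D^\cpl)\le\binom k2\frac1n ,
\]
so the fixed-urn distance is at most $k(k-1)/n$.

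\textbf{Step 3 (mixing).} Total variation is jointly convex, so for any set $A$,
\[
\Bigl|\int\bigl(U_{\mathbf x,k}-\otimes^k\theta_{\mathbf x}\bigr)(A)\,d\nu(\mathbf x)\Bigr|\le\int\tfrac12\bigl\lVert U_{\mathbf x,k}-\otimes^k\theta_{\mathbf x}\bigr\rVert\,d\nu .
\]
Combining this with Step 2 yields the claimed bound.

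We expect the main obstacle to be bookkeeping rather than any deep estimate. The points to check are the constant under the factor-$2$ normalization of $\nm{\blank}$, and the measurability of $\mathbf x\mapsto U_{\mathbf x,k}$ and $\mathbf x\mapsto\theta_{\mathbf x}$ needed for the integrals to make sense. Both maps are continuous in $\mathbf x$ for the weak topology, which settles measurability.
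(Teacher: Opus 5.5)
Your proof is correct and follows the standard Diaconis--Freedman route. The paper only quotes this theorem, but your mixing measure (the pushforward of $\nu$ under $\mathbf x\mapsto\frac1n\sum_j\delta_{x_j}$) is exactly the $w_n$ built in \cref{lem:exchange-2nd-mmt-approx}. Your comparison of sampling with and without replacement uses i.i.d.\ uniform indices together with the distinctness event, which is the same device as in the proof of \cref{lem:2-norm-conv-cond}, and you correctly match the constant $2\Pr(D^\cpl)\le k(k-1)/n$ to the factor-$2$ normalization.

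One small simplification: in Step~3 you do not need measurability of $\mathbf x\mapsto\nm{U_{\mathbf x,k}-\otimes^k\theta_{\mathbf x}}$. It suffices to integrate the pointwise bound $\abs{(U_{\mathbf x,k}-\otimes^k\theta_{\mathbf x})(A)}\le k(k-1)/(2n)$ for each fixed $A$.
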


\cref{thm:DF} essentially says that when an exchangeable measure is projected to a subspace of sufficiently low dimension $o(\sqrt n)$, the projection can be well-approximated by a mixture of product measures in total variation. One should see $o(\sqrt n)$ as the correct regime when working with exchangeability, which we will see again in the proof of \cref{thm:main}. Since we are interested in the block second moments $\E(X_1^2\dotsm X_d^2)$ (and also block fourth moments) of exchangeable measures, we have to appropriately adjust their result; see \cref{lem:exchange-2nd-mmt-approx}. This then allows us to write $\E(X_1^2\dotsm X_d^2) = \int x_1^2\dotsm x_d^2\,d\nu$ into the $2d$-th moment of the mixture measure, which significantly simplifies the three conditions of \cref{thm:main}.

\subsection{Combinatorial estimates} The following two identities will be crucial to our proof of \cref{thm:main}, but will also appear in other places.
\begin{lem} \label{lem:sqrt-n-lim}
    Let $L = L_n$ and $d = d_n$ be sequences of numbers, and we assume $d \geq 1$. Provided that $Ld^2/n\to 0$, we have \begin{equation}
        \lim_{n \to \infty} \biggl(1 + \frac{Ld}{n}\biggr)^{d} = 1. \label{eq:Ldn-growth}
    \end{equation} It follows that when $d = o(\sqrt n)$, $\frac{n\dotsm (n-d+1)}{n^d} \to 1$, $\binom{n-d}{d} / \binom{n}{d} \to 1$, and $\binom{n}{d} / \binom{n + d - 1}{d} \to 1$.
\end{lem}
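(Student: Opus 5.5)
The limit \eqref{eq:Ldn-growth} follows by taking logarithms. Since $L,d\ge 0$ (we may assume $L\ge 0$, as in all applications; if $L$ may be negative, apply the argument to $\abs{L}$ and use the two-sided bound $\abs{\log(1+x)}\le 2\abs{x}$ for $\abs{x}\le 1/2$), we have
\[
0\le d\log\Bigl(1+\frac{Ld}{n}\Bigr)\le d\cdot\frac{Ld}{n}=\frac{Ld^2}{n}\to 0
\]
by $\log(1+x)\le x$. Exponentiating gives $(1+Ld/n)^d\to 1$. Note also that $Ld/n\le Ld^2/n\to0$ because $d\ge1$, so the base is well defined for large $n$.

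For the three consequences, take $d=o(\sqrt n)$ and apply \eqref{eq:Ldn-growth} with a suitable constant $L$ (e.g.\ $L=1$ or $L=2$), so that $Ld^2/n\to0$. The comparison will use the elementary inequality $1-x\ge (1+2x)^{-1}$ for $0\le x\le 1/2$, or equivalently $\log(1-x)\ge -2x$ on that range.

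\emph{First ratio.} We have
\[
1\ge\frac{n(n-1)\dotsm(n-d+1)}{n^d}=\prod_{j=0}^{d-1}\Bigl(1-\frac jn\Bigr)\ge\Bigl(1-\frac dn\Bigr)^d\ge\Bigl(1+\frac{2d}{n}\Bigr)^{-d}\to1,
\]
where the last limit is \eqref{eq:Ldn-growth} with $L=2$.

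\emph{Second ratio.} We have
\[
\binom{n-d}{d}\Big/\binom nd=\prod_{j=0}^{d-1}\frac{n-d-j}{n-j}\ge\Bigl(1-\frac{2d}{n}\Bigr)^d,
\]
since each factor equals $1-\frac{d}{n-j}\ge 1-\frac{d}{n-d}\ge 1-\frac{2d}{n}$ once $d\le n/2$. The ratio is also at most $1$. The lower bound tends to $1$ by the same argument with $L=4$.

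\emph{Third ratio.} We have
\[
\binom nd\Big/\binom{n+d-1}d=\prod_{j=0}^{d-1}\frac{n-j}{n+d-1-j}.
\]
This is at most $1$, and each factor is at least $\frac{n-d}{n+d}\ge 1-\frac{2d}{n}$. So the same bound as before applies.

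The only step requiring any care is the uniform comparison $1-x\ge(1+2x)^{-1}$, valid for small $x$, which turns the lower products into the form \eqref{eq:Ldn-growth}. Everything else is routine.
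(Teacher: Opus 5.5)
Your proof is correct and follows the paper's route: take logarithms and bound $d\log(1+Ld/n)$ by $O(\abs{L}d^2/n)$, then squeeze each of the three ratios between $1$ and a power of the form $(1-cd/n)^d$. The only difference is cosmetic. The paper applies \eqref{eq:Ldn-growth} directly with negative $L$ (in effect $L=-1$ and $L=-2$, via $\frac{x}{1+x}\le\log(1+x)\le x$ for $x>-1$), whereas you pass through $(1-x)^d\ge(1+2x)^{-d}$ so that only $L\ge 0$ is needed. One small slip: $\log(1-x)\ge -2x$ is implied by that inequality rather than equivalent to it, though both hold on $[0,1/2]$ and either suffices.
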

\begin{proof}
    Since $d\geq 1$ and $Ld^2/n \to 0$, it must be true for large enough $n$ that $1 + \frac{Ld}{n} > \frac{1}{2}> 0$. Therefore $\frac{Ld/n}{1 + (Ld/n)}\leq \log\bigl(1 + \frac{Ld}{n}\bigr) \leq Ld/n$ for any $L$, which implies \[\frac{Ld^2/n}{1 + (Ld/n)}\leq d \log\biggl(1 + \frac{Ld}{n}\biggr) \leq \frac{Ld^2}{n}.\] Taking limits and then exponentiate gives us the desired \eqref{eq:Ldn-growth}.

    Now take $L = -1$, and hence $d = o(\sqrt n)$. 
    By \eqref{eq:Ldn-growth}, we obtain \[\frac{n\dotsm (n-d+1)}{n^d} \geq \biggl(\frac{n-d}{n}\biggr)^d \to 1.\] Since the left-hand side is also bounded above by $1$, it converges to $1$. Similarly it is true that $\binom{n - d}{d}/\binom{n}{d} \to 1$, by observing 
    \[
        \biggl(\frac{n-2d}{n}\biggr)^d\leq \binom{n-d}{d} \bigg/ \binom{n}{d} = \frac{(n-d)\dotsm (n-2d+1)}{n\dotsm (n-d+1)} \leq \biggl(\frac{n-d}{n}\biggr)^d.
    \] The proof of $\binom{n}{d} / \binom{n + d - 1}{d} \to 1$ is similar.
\end{proof}

% We start of with a lemma to be used in the proof of \cref{thm:main}. It already appears in the proof of \cite[Theorem~2]{Yaskov_2023} for tensors induced from independent base vectors, but for the sake of completeness we single this lemma out.
\begin{lem} \label{lem:binom-ineq}
    For $r \leq d \leq n$, the following inequality holds when $d - r \leq n - d$: \[\binom{n - d}{d - r} \leq \biggl(\frac{d}{n}\biggr)^r \binom{n}{d}.\]
\end{lem}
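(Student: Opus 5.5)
The plan is to compare the two binomial coefficients through the ratio of falling factorials and bound it factor by factor. First I will write
\[
\binom{n-d}{d-r}\bigg/\binom{n}{d} = \frac{(n-d)!}{(d-r)!\,(n-2d+r)!}\cdot\frac{d!\,(n-d)!}{n!}.
\]
This ratio splits into two products:
\[
\frac{d!}{(d-r)!} = d(d-1)\dotsm(d-r+1) \le d^r,
\qquad
\frac{(n-d)!\,(n-d)!}{(n-2d+r)!\,n!}.
\]
The condition $d-r\le n-d$ is exactly what guarantees that $(n-2d+r)!$ is defined, i.e.\ $n-2d+r\ge 0$.

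For the second product, I will write $\frac{(n-d)!}{n!} = \frac{1}{n(n-1)\dotsm(n-d+1)}$ and $\frac{(n-d)!}{(n-2d+r)!} = (n-d)(n-d-1)\dotsm(n-2d+r+1)$. The latter has $d-r$ factors. Pair these $d-r$ factors with the last $d-r$ factors of the denominator, namely $(n-r)(n-r-1)\dotsm(n-d+1)$, matching $n-d-j$ with $n-r-j$ for $j=0,\dots,d-r-1$. Each quotient $\frac{n-d-j}{n-r-j}$ is at most $1$ since $r\le d$. What remains in the denominator is $n(n-1)\dotsm(n-r+1)$. The product is therefore at most $\frac{1}{n(n-1)\dotsm(n-r+1)}$.

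Combining the two bounds gives
\[
\binom{n-d}{d-r}\le \binom{n}{d}\,\frac{d(d-1)\dotsm(d-r+1)}{n(n-1)\dotsm(n-r+1)} = \binom{n}{d}\prod_{j=0}^{r-1}\frac{d-j}{n-j}.
\]
Since $d\le n$, each factor satisfies $\frac{d-j}{n-j}\le\frac{d}{n}$, because $(d-j)n\le d(n-j)$ is equivalent to $jd\le jn$. This yields the claimed bound $\bigl(\frac{d}{n}\bigr)^r\binom{n}{d}$.

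The only delicate step is the bookkeeping in the factor pairing, in particular checking the index ranges so that the $d-r$ factors match correctly and none is left uncancelled. The edge cases $r=d$, where the binomial is $1$ and the pairing is empty, and $r=0$ follow from the same argument.
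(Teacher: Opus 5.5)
Your proof is correct and follows the paper's argument essentially verbatim. The paper also splits the ratio into $\frac{d!/(d-r)!}{n!/(n-r)!}$, bounded factor by factor by $(d/n)^r$, times $\frac{(n-d)!/(n-2d+r)!}{(n-r)!/(n-d)!}$, bounded by $1$ through the same termwise pairing of the $d-r$ factors.

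One small remark: the preliminary bound $d!/(d-r)! \le d^r$ is never used, and on its own it would point the wrong way against the leftover denominator $n(n-1)\cdots(n-r+1)$. You rightly keep the exact product $\prod_{j<r}\frac{d-j}{n-j}$ instead.
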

\begin{proof}
    \begin{align*}
        \frac{\binom{n - d}{d - r}}{\binom{n}{d}} = \frac{\frac{(n-d)!}{(n-2d+r)!(d-r)!}}{\frac{n!}{(n-d)!d!}} & = \frac{\frac{d!}{(d-r)!}}{\frac{n!}{(n-r)!}} \cdot\frac{\frac{(n-d)!}{(n-2d+r)!}}{\frac{(n-r)!}{(n-d)!}} \\
        & \leq \frac{(d-r+1)\dotsm d}{(n-r+1)\dotsm n} \cdot \frac{(n-2d+r+1)\dotsm(n-d)}{(n-d+1)\dotsm(n-r)} \\
        & \leq \biggl(\frac{d}{n}\biggr)^r\cdot 1. \qedhere
    \end{align*}
\end{proof}

\iffalse
Our computation in Case~1 also shows that given \cref{cond:2nd-4th-compare}, we cannot relax \cref{cond:var} further. In some sense, this proves the necessity of \cref{cond:var}. (For finite $d$ already, it is unlikely we can drop condition (C) at all, in light of \cref{lem:relax-4th-moment}.)
\begin{prop}
    For each $n$, let $\mathbf X^{(n)} = (X_1,\dotsc,X_n)$ be an exchangeable vector, and let $d = o(\sqrt n)$. Assume \cref{cond:2nd-4th-compare}. As $n \to \infty$, \begin{equation} \label{eq:asymp-pos-corr}
        \liminf_n \E(X_1^2 \dotsm X_{2d}^2) - \bigl(\E X_1^2\dotsm X_d^2\bigr)^2 \geq 0.
    \end{equation} This means precisely that $X_1^2 X_2^2\dotsm X_{d}^2$ and $X_{d+1}^2 X_{d+2}^2\dotsm X_{2d}^2$ cannot be asymptotically negatively correlated.
\end{prop}
\begin{proof}
    Take $A = I_{\binom{n}{d}}$ in Case~1, which has $\Fnm{A}^2 = \binom{n}{d}$. This allows us to drop the AM-GM inequality, so that \begin{align*}
        \Var(\mathbf x^\trp \mathbf x) & = \sum_{i,j} \Cov(\mathbf x_i^2,\mathbf x_j^2) \\
        & \leq M \sum_i \sum_{r=1}^d L^r \sum_j \ind\{\abs{i \cap j} = r\} + \sum_i \sum_j\bigl(M \cdot \ind\{\abs{i \cap j} = 0\} - Q^2\bigr) \\
        & \leq M\binom{n}{d}^2 \biggl[\Bigl(1 + \frac{Ld}{n}\Bigr)^d - 1\biggr] + \binom{n}{d} \biggl[\binom{n - d}{d} M - \binom{n}{d}Q^2\biggr].
    \end{align*}
    Divide both sides by $\binom{n}{d}^2$ and take $\liminf_n$, we must have \[
        0 \leq \liminf_n M  - Q^2,
    \] as desired. Also note that we never used the assumption that $\mathbf X^{(n)}$ is unconditional in Case~1.
\end{proof}
\fi

\section{Concentration of the quadratic form in probability, Theorem~\ref{thm:main}} \label{sec:concentration-proof-of-main}
\begin{proof}[{Proof of \cref{thm:main}}]
    For simplicity, we will write \[Q(n) = \E(X_1^2\dotsm X_d^2) \quad\text{and}\quad M(n) = \E(X_1^2\dotsm X_{2d}^2).\] Our assumptions then become $Q \to 1$, $M -Q^2 \to 0$, and \[
    \E(X_1^4\dotsm X_r^4 X_{r+1}^2\dotsm X_{2d - r}^2) \leq L^r.\] Note since $M \to 1$, for large enough $n$ we always have $M \geq 1/2$. Therefore, replacing $L$ by $2L$ allows us to assume without loss of generality that \[
        \E(X_1^4\dotsm X_r^4 X_{r+1}^2\dotsm X_{2d - r}^2) \leq L^rM.
    \] By \cref{thm:MP}, we claim it is sufficient to check that $\Var(\mathbf x^\trp A\mathbf{x}) = o(p^2)$. As mentioned before, this would imply \eqref{eq:anisotropic-WCP} by Chebyshev's inequality. Therefore to check \eqref{eq:conv-in-prob}, it remains to show \[
        \frac{\tr(\Sigma_p A_p) - \tr A_p}{p} = \frac{(\E x_i^2 - 1)\tr A_p}{p} \to 0 \quad\text{in probability}.
    \] This is correct thanks to $\tr A_p \leq p\opnm{A_p} \leq p$.
    
    We will divide $A = A_{\mathrm{diag}} + A_{\mathrm{off}}$, where $A_{\mathrm{diag}}$ is the diagonal part of $A$. Since \[\Var(\mathbf x^\trp A \mathbf x) \leq 2\Var(\mathbf x^\trp A_{\mathrm{diag}} \mathbf x ) + 2{\Var(\mathbf x^\trp A_{\mathrm{off}} \mathbf x)},\] 
    % This is because for any random variables $Y_1$ and $Y_2$, we have \begin{align*}
    %     \Var(Y_1 + Y_2) & = \Var(Y_1) + \Var(Y_2) + 2\Cov(Y_1,Y_2) \\
    %     & \leq \Var(Y_1) + \Var(Y_2) + 2 \sqrt{\Var(Y_1)\Var(Y_2)}.
    % \end{align*}
    it suffices to show that $\Var(\mathbf x^\trp A_{\mathrm{diag}} \mathbf x ) = o(p^2)$ and $\Var(\mathbf x^\trp A_{\mathrm{off}} \mathbf x ) = o(p^2)$.

    We use $i,j,k,\ell$ for indices in $\binom{[n]}{d}$, the $d$ element subsets of $[n]$. % \fc{talk about this earlier multiindices, notation subsection in the intro} % and $\alpha,\beta,\gamma,\delta$ for indices in $[n]$. 

    \emph{Case 1.} Suppose $A$ is diagonal. Then $\Var(\mathbf x^\trp A\mathbf x) = \sum_{i,j\in \binom{[n]}{d}} a_{ii}a_{jj} \Cov( x_i^2, x_j^2)$. For any two set of indices $i$ and $j$ with $r = \abs{i \cap j}$, by expanding the covariance and using exchangeability, we get \begin{align*}
        \Cov(x_{i}^2, x_j^2) & = \E( x_i^2 x_j^2) - \bigl(\E x_i^2\bigr)^2\\
        & = \E(X_1^4\dotsm X_r^4 X_{r+1}^2\dotsm X_{2d - r}^2) - \bigl(\E X_1^2\dotsm X_{d}^2\bigr)^2, \\
        & \leq L^r M - Q^2.
    \end{align*}
    We can rewrite the above into \begin{align*}
        \bigl\vert{\Cov(x_{i}^2, x_j^2)}\bigr\vert & \leq \biggl\vert\sum_{r = 0}^d \ind\{\abs{i \cap j} = r\} {L^r M - Q^2}\biggr\vert \\
        & \leq \biggl(\sum_{r = 1}^d L^r M \cdot\ind\{\abs{i \cap j} = r\} \biggr) + \bigl\vert M \cdot \ind\{\abs{i \cap j} = 0\} - Q^2\bigr\vert. % \\
        % & \leq \sum_{r = 1}^d 3^r M \cdot\ind\{\abs{i \cap j} = r\}
    \end{align*}% by our assumption.
    % \dm{should $Cov(x_i,x_j)$ be $Cov(x_i^2,x_j^2)$ below? Need to say something if the covariance is negative. FC: Fixed. Already covered in Proposition~5.} % Let us assume in addition $\Cov(\mathbf x_{i}^2,\mathbf x_j^2) \geq 0$, which will be justified at the end.
    Then by AM-GM inequality and symmetry, \begin{align*}
         \Var(\mathbf x^\trp A\mathbf x) & \leq \frac{1}{2} \sum_{i,j} (a_{ii}^2 + a_{jj}^2) \bigl\vert\Cov( x_i^2,  x_j^2)\bigr\vert \\
         & \leq \sum_{i,j} a_{ii}^2 \biggl[\biggl(\sum_{r = 1}^d L^r M \cdot\ind\{\abs{i \cap j} = r\} \biggr) + \bigl\vert M \cdot \ind\{\abs{i \cap j} = 0\} - Q^2\bigr\vert \biggr] \\
         & = M \sum_{i} a_{ii}^2 \sum_{r = 1}^d L^r \sum_j \ind\{\abs{i \cap j} = r\} + \sum_i a_{ii}^2 \sum_{j} \bigl\vert M \cdot \ind\{\abs{i \cap j} = 0\} - Q^2\bigr\vert\\
         & = M \sum_{i} a_{ii}^2 \sum_{r = 1}^d L^r \sum_j \ind\{\abs{i \cap j} = r\} + \sum_i a_{ii}^2 \sum_{j : j\cap i = \emptyset} \abs{M  - Q^2} + \sum_i a_{ii}^2\sum_{j : j\cap i \neq \emptyset} Q^2
         % \\ & = M \Fnm{A}^2 \sum_{r = 1}^d L^r \sum_j \ind\{\abs{i \cap j} = r\} \\ & \qquad + \Fnm{A}^2 \binom{n - d}{d} \abs{M - Q^2} + \Fnm{A}^2\biggl[\binom{n}{d} - \binom{n - d}{d}\biggr] Q^2
     \end{align*} The sum $\sum_j \ind\{\abs{i \cap j} = r\}$ is precisely $\binom{d}{r} \binom{n - d}{d - r}$: fix the choice for $i$, there are $\binom{d}{r}$ choices for $j \cap i$ and $\binom{n - d}{d - r}$ choices for $j - j\cap i $. Therefore \[
        \Var(\mathbf x^\trp A\mathbf x) \leq M \Fnm{A}^2 \sum_{r = 1}^d L^r \binom{d}{r} \binom{n - d}{d - r} + \Fnm{A}^2\binom{n - d}{d} \abs{M - Q^2} + \Fnm{A}^2\biggl[\binom{n}{d} - \binom{n - d}{d}\biggr] Q^2,
     \] which by \cref{lem:binom-ineq,lem:sqrt-n-lim} simplifies to \begin{align}
        \frac{1}{\Fnm{A}^2 \binom{n}{d}}\Var(\mathbf x^\trp A\mathbf x)& \leq M  \sum_{r=1}^d \binom{d}{r} \biggl(\frac{Ld}{n}\biggr)^r +  \frac{\binom{n-d}{d}}{\binom{n}{d}}\abs{M - Q^2} + \biggl[1 - \frac{\binom{n-d}{d}}{\binom{n}{d}}\biggr] Q^2 \nonumber
        \\ & \leq M\biggl[\Bigl(1 + \frac{Ld}{n}\Bigr)^d - 1\biggr]  + \abs{M - Q^2} + o(1)Q^2. \label{eq:diag-var-bound-above}
     \end{align}
    Now by $M - Q^2 \to 0$, \cref{lem:sqrt-n-lim}, and $M$ and $Q^2$ both being bounded, the three terms in \eqref{eq:diag-var-bound-above} all go to $0$. Since $\Fnm{A}^2 \leq p\opnm{A} \leq p = \binom{n}{d}$, we conclude that $\Var(\mathbf x^\trp A\mathbf x) = o(p^2)$.
    
    \emph{Case 2.} Suppose the diagonal of $A$ consists only of zeros. Define $\Delta_r = \sum_{i,j} a_{ij} x_i x_j \ind\{\abs{i \cap j} = r\}$. Then $\E \mathbf x^\trp A\mathbf x = \tr (A\E \mathbf x\mathbf x^\trp) = \tr A = 0$. Therefore \begin{equation}\sqrt{\Var(\mathbf x^\trp A \mathbf x)} = \bigl(\E\abs{\mathbf x^\trp A \mathbf x}^2\bigr)^{1/2} = \nm{\mathbf x^\trp A\mathbf x}_{L^2(\Pr)} \leq \sum_{r = 0}^{d-1}\nm{\Delta_r}_{L^2(\Pr)} = \sum_{r = 0}^{d-1} \bigl(\E \abs{\Delta_r}^2\bigr)^{1/2}, \label{eq:off-diagonal-var-prelim-bound}\end{equation} where \begin{align} 
        \E \abs{\Delta_r}^2 & = \sum_{\abs{i \cap j} = \abs{k \cap \ell} = r} a_{ij}a_{k\ell} \E(x_i x_j x_k x_\ell) \nonumber \\ &  \leq \frac{1}{2}\sum_{\abs{i \cap j} = \abs{k \cap \ell} = r} (a_{ij}^2 + a_{k\ell}^2) \abs{\E x_i x_j x_k x_\ell} \nonumber \\
        & = \sum_{\abs{i \cap j} = \abs{k \cap \ell} = r} a_{ij}^2 \abs{\E x_i x_j x_k x_\ell}.\label{eq:expand-Delta}
        \end{align}  For each tuple $(i,j,k,\ell)$, we can rewrite $\E(x_i x_j x_k x_\ell)$ as \[
            \E(X_1^4\dotsm X_s^4 X_{s+1}^2\dotsm X_{s+t}^2 X_{s+t+1}^3\dotsm X_{s+t+u}^{3}X_{s+t+u+1}\dotsm X_{s+t+u+v})
        \] by exchangeability. Namely, $s,t,u,v$ are respectively the number of indices repeated in the multi-indices $i,j,k,\ell$ for $4,2,3,1$ times. Because $\mathbf X$ is unconditional, if $u \neq 0$ or $v \neq 0$, then the expectation is automatically $0$. Therefore, we only have to focus on the case where \[
            \ \abs{i \cap j} = \abs{k \cap \ell} = r\quad \text{and}\quad 4s + 2t = 4d.\] 
        In this case, \[
                \E (x_{i}x_{j}x_kx_\ell) = \E(X_1^4\dotsm X_s^4 X_{s+1}^2\dotsm X_{s+t}^2) \leq L^s M.
            \]
        This leads us to define for $0 \leq s \leq r \leq d - 1$ that \[\Gamma(s,r) = \bigl\{(i,j,k,\ell) : \abs{i \cap j}=\abs{k \cap \ell} = r \text{ and } 4s + 2t = 4d \bigr\},\] where $s$ and $t$ are as above.
        Our aim is to reduce the sum in \eqref{eq:expand-Delta} to a sum over $i,j$ such that $\abs{i \cap j} = r$, so that we can use an analogous symmetry argument as in Case 1. Suppose $i,j$ have already been chosen so that $\abs{i \cap j} = r$. For each fixed $0 \leq s \leq r$, there are in total $\binom{r}{s}$ choices for $i \cap j \cap k \cap \ell$. The remaining choices for $k \cap \ell - i\cap j \cap k\cap \ell$ are $\binom{n - r}{r - s}$, since it is not allowed to intersect $i \cap j$. Now within $i$ and $j$, there are $2d - 2r$ indices not repeated. We have $\binom{2d - 2r}{d - r}$ choices for $k - k \cap \ell$, and the remaining $d - r$ indices are given to $\ell - k\cap \ell$. Therefore in total there are \[
            \binom{r}{s}\binom{n-r}{r-s}\binom{2d - 2r}{d-r} \quad\text{choices for $k$ and $\ell$},
        \] when the choices of $(i,j), r,s$ are fixed.
        
        Returning back to \eqref{eq:expand-Delta}, we can further bound \begin{align*}
            \E \abs{\Delta_r}^2 & \leq \sum_{s = 0}^r L^s M\sum_{(i,j,k,l) \in \Gamma(s,r)} a_{ij}^2 \\
            &= M \sum_{s = 0}^r L^s\sum_{\abs{i \cap j} = r} a_{ij}^2 \binom{r}{s}\binom{n-r}{r-s}\binom{2d - 2r}{d-r}  \\ 
            & \leq M\biggl(\sum_{\abs{i\cap j} = r} a_{ij}^2\biggr)\binom{2d - 2r}{d-r} \sum_{s=0}^r  L^s  \binom{n}{r}\binom{r}{s}\biggl(\frac{r}{n}\biggr)^s,
        \end{align*} where the last line appeals to \cref{lem:binom-ineq}.
        Therefore continuing from \eqref{eq:off-diagonal-var-prelim-bound}, \begin{align}
       \sqrt{\Var(\mathbf x^\trp A \mathbf x)} \leq \sum_{r = 0}^{d-1} \bigl(\E \abs{\Delta_r}^2\bigr)^{1/2} & \leq \sqrt{M}\sum_{r = 0}^{d-1} \biggl(\sum_{\abs{i\cap j} = r} a_{ij}^2\biggr)^{1/2} \biggl[\binom{2d - 2r}{d-r} \sum_{s=0}^r   \binom{n}{r}\binom{r}{s}\biggl(\frac{Lr}{n}\biggr)^s\biggr]^{1/2} \nonumber \\ 
        & \leq \sqrt{M}\sqrt{\sum_{r = 0}^{d-1} \sum_{\abs{i \cap j} = r} a_{ij}^2} \sqrt{\sum_{r=0}^{d-1} \binom{2d - 2r}{d-r}\binom{n}{r} \sum_{s=0}^r  \binom{r}{s}\biggl(\frac{Lr}{n}\biggr)^s} \nonumber \\ & = \sqrt{M}\sqrt{\Fnm{A}^2}\sqrt{\sum_{r=0}^{d-1} \binom{2d - 2r}{d-r} \binom{n}{r}\biggl(1 + \frac{Lr}{n}\biggr)^r}, \label{eq:case-2-intermed-bound}\end{align}
        where we have used the Cauchy--Schwarz inequality in the second line. Now $\binom{2d - 2r}{d-r}  \leq 4^{d-r}$, and also \begin{align*}
            \frac{\binom{n}{r}}{\binom{n}{d} } =  \frac{\binom{d}{r}}{\binom{n-r}{d-r}}   =  \frac{(r+1)\dotsm d}{(n-d+1)\dotsm (n-r)}  \leq \biggl(\frac{d}{n-d}\biggr)^{d-r} \leq \biggl(\frac{2d}{n}\biggr)^{d-r}
        \end{align*}
        provided that $n \geq 2d$. Therefore \begin{align}
            \sum_{r=0}^{d-1} \binom{2d - 2r}{d-r} \binom{n}{r}\biggl(1 + \frac{Lr}{n}\biggr)^r & \leq \binom{n}{d}\sum_{r=0}^{d-1} \biggl( \frac{8d}{n}\biggr)^{d-r}\biggl(1 + \frac{Lr}{n}\biggr)^r \nonumber\\
            & \leq \binom{n}{d} \biggl( \frac{8d}{n}\biggr)^d\sum_{r=0}^{d-1} \biggl( \frac{8d}{n}\biggr)^{-r}\biggl(1 + \frac{Lr}{n}\biggr)^r. \label{eq:reduce-geo-series}
        \end{align}
    The last sum can be simplified to \begin{align*}
     \sum_{r=0}^{d-1}\biggl(\frac{n + Lr}{8d}\biggr)^r \leq \sum_{r=0}^{d-1} \biggl(\frac{n}{8d} + \frac{L}{8}\biggr)^{r} = \frac{(\frac{n}{8d} + \frac{L}{8})^d - 1}{\frac{n}{8d} + \frac{L}{8} - 1} \leq \frac{(\frac{n}{8d} + \frac{L}{8})^d - 1}{\frac{n}{16d}},
    \end{align*} provided that $\frac{n}{8d} + \frac{L}{8} - 1\geq \frac{n}{16d}$. To make this true, without loss of generality we may assume $n \geq 16d$. Continuing from \eqref{eq:reduce-geo-series}, we obtain \begin{align*}
          \sum_{r=0}^{d-1} \binom{2d - 2r}{d-r} \binom{n}{r}\biggl(1 + \frac{Lr}{n}\biggr)^r & \leq \binom{n}{d} \frac{16d}{n}\biggl(\frac{8d}{n}\biggr)^d \biggl[\Bigl(\frac{n}{8d} + \frac{L}{8}\Bigr)^d - 1\biggr] \\
          & =  \binom{n}{d} \frac{16d}{n} \biggl[\Bigl(1 + \frac{Ld}{n}\Bigr)^d - \Bigl(\frac{8d}{n}\Bigr)^d\biggr].
    \end{align*}
    Now returning back to \eqref{eq:case-2-intermed-bound} gives \[
        \Var(\mathbf x^\trp A\mathbf x) \leq M \Fnm{A}^2\binom{n}{d} \frac{16d}{n} \biggl(1 + \frac{Ld}{n}\biggr)^d,
    \] provided that $n \geq 16d$.
    
    As in the diagonal case, by \cref{lem:sqrt-n-lim} and $M \to 1$, we can conclude that $\Var(\mathbf x^\trp A\mathbf x) = o(p^2)$.
    % It remains to show \cref{lem:relax-4th-moment}, and then our proof will be complete.
\end{proof}

\section{Theorem~\ref{thm:finite-case} for fixed \texorpdfstring{$d$}{d}} \label{sec:finite-d}
In this section, we treat the case when the degree $d$ remains fixed as $n$ goes to infinity and prove \cref{thm:finite-case}.
To motivate the proof, and as a warm-up, we begin with proving the example in \cref{prop:mixture} for mixtures of i.i.d.\ distributions.

\begin{proof}[Proof of \cref{prop:mixture}]
For the base vector we assume \[\mathbf{X} =(X_1,\dotsc,X_n) \sim \otimes^n \theta \,dw_n(\theta).\] Here $\theta$ has unit variance and fourth moment bounded by $L$, and $w_n$ is a measure on $\mathcal P(\R)$. % We write $(X_1;\theta),\dotsc,(X_n;\theta)$ for  be independent and distributed according to some $\theta$, which is an symmetric measure with unit variance and bounded fourth moment.
  To distinguish from the standard expectation $\E$, we shall use $E_{w_n}$ for $\int_{\mathcal P(\R)} \blank\,dw_n(\theta)$. 
    With this notation, we can write \[
        \E(X_1^4\dotsm X_r^4 X_{r+1}^2\dotsm X_{2d - r}^2) = E_{w_n} \biggl(\int x_1^4\,d\theta\biggr)^r \biggl(\int x_1^2\,d\theta\biggr)^{2d - 2r} \leq L^r.
    \] Meanwhile by $\int x_1^2\,d\theta = 1$ for $w_n$-a.e.\ $\theta$, \[
        E_{w_n}\biggl(\int x_1^2\,d\theta\biggr)^d \to 1
    \text{ and }
        E_{w_n}\biggl(\int x_1^2\,d\theta\biggr)^{2d}  \to 1.
    \] Therefore the conditions of \cref{thm:main} are all met.
\end{proof}

The proof motivates the following idea: if we can approximate an exchangeable distribution by a mixture of product distributions, then we might use the above argument to satisfy the conditions of \cref{thm:main}. We start with the following lemma inspired by \cref{thm:DF}.

\begin{lem} \label{lem:exchange-2nd-mmt-approx}
    For each $n$, let $\nu = \nu^{(n)}$ be an exchangeable distribution on $\R^n$. There is an explicit sequence of probability distributions $\{w_n\}$ on $\mathcal P(\R)$ such that \begin{equation}
        \int_{\mathcal P(\R)} \Bigl(\int_{\R} x_1^2 \,d\theta\Bigr)^k \,dw_n = \frac{1}{n^k} \int \bigl(x_1^2 + \dotsm + x_n^2\bigr)^k\,d\nu.
    \end{equation}
    Therefore if for some $k \leq n$, we have \begin{equation}\int x_1^{2}\dotsm x_{k}^2\,d\nu - \frac{1}{n^k} \int \bigl(x_1^2 + \dotsm + x_n^2\bigr)^k\,d\nu \to 0, \label{eq:cond-2nd-mmt-approx}\end{equation}
    then \begin{equation}
        \int x_1^2\dotsm x_k^2 \,d\nu - \int_{\mathcal P(\R)} \Bigl(\int_{\R} x_1^2 \,d\theta\Bigr)^k \,dw_n \to 0. \label{eq:exchange-2nd-mmt-approx}
    \end{equation}
\end{lem}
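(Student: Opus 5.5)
The plan is to build $w_n$ from the empirical measure of the coordinates of $\mathbf X$, in the spirit of \cref{thm:DF}. Given $\mathbf X=(X_1,\dotsc,X_n)\sim\nu$, let
\[
\theta_{\mathbf X}=\frac1n\sum_{\alpha=1}^n\delta_{X_\alpha}\in\mathcal P(\R).
\]
Take $w_n$ to be the law of the random measure $\theta_{\mathbf X}$, i.e.\ the pushforward of $\nu$ under $\mathbf x\mapsto \frac1n\sum_\alpha\delta_{x_\alpha}$. This is the explicit sequence in the statement. One could symmetrize further, for instance by averaging with the reflected measure so that each $\theta$ is symmetric. That is not needed for the identity, but it would be harmless, since symmetrization does not change second moments.

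For the identity, note that for each fixed $\mathbf x$,
\[
\int_\R x_1^2\,d\theta_{\mathbf x}=\frac1n\sum_{\alpha=1}^n x_\alpha^2 .
\]
Raising this to the $k$-th power and integrating against $\nu$ gives, by the change of variables for pushforward measures,
\[
\int_{\mathcal P(\R)}\Bigl(\int x_1^2\,d\theta\Bigr)^k dw_n=\frac1{n^k}\int(x_1^2+\dotsb+x_n^2)^k\,d\nu .
\]
This holds whenever the right-hand side is finite. If it is infinite, both sides are $+\infty$, since all quantities are nonnegative and Tonelli applies. Measurability of $\mathbf x\mapsto\theta_{\mathbf x}$ into $\mathcal P(\R)$ with the weak topology is standard, because $\mathbf x\mapsto\int f\,d\theta_{\mathbf x}$ is continuous for every bounded continuous $f$.

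The second claim then follows at once. Substituting the identity into \eqref{eq:cond-2nd-mmt-approx} turns its second term into exactly $\int(\int x_1^2\,d\theta)^k\,dw_n$, which is \eqref{eq:exchange-2nd-mmt-approx}.

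The only subtle point is making sure the construction is genuinely "explicit" and well defined as a probability measure on $\mathcal P(\R)$; the measurability remark above handles this. Exchangeability is not even used for the identity itself. It enters only later, when one checks \eqref{eq:cond-2nd-mmt-approx}: expanding $(\sum x_\alpha^2)^k$ by exchangeability gives a sum of terms $\int x_1^2\dotsm x_k^2\,d\nu$ with weight $n(n-1)\dotsm(n-k+1)/n^k$, plus terms with repeated indices. That verification is not required here.
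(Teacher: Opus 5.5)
Your proposal is correct and is essentially the paper's own proof. The paper's ``one draw from the urn $U_t$'' measure $M_{1,t}$ is exactly your empirical measure $\frac1n\sum_{\alpha}\delta_{t_\alpha}$, $w_n$ is its pushforward under $\nu$, and the identity follows from the same change of variables. Your measurability and Tonelli remarks add a little extra care that the paper leaves implicit.
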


\begin{proof}
    Fix $n$. For each $t = (t_1,\dotsc,t_n) \in \R^n$, we can consider an urn $U_t = \{t_1,\dotsc,t_n\}$. Drawing one ball from this urn gives us the measure $M_{1,t}$ on $\R$ defined by $M_{1,t}\{x_1\} = \frac{m}{n}$, where $m = \lvert \{i: t_i =x_1\}\rvert$ is the number of times $x_1$ appears in $U_t$. % where $x_j$ represent the outcome of the $j$th draw from the urn.
    % If we are drawing $k$ items from the urn without replacement, then we end up with a measure $H_{k,t}$ on $\R^k$, where $H_{k,t}\{(x_1,\dotsc,x_k)\} = \frac{1}{n\dotsm(n-k+1)}$ for any $(x_1,\dotsc,x_k) \in (U_t)^k$ that are mutually distinct.
    % \dm{The meaning of an urn should be expanded here a bit}. \fc{sampling wo replacement; However, this is only to define a measure by drawing one ball, has nothing to do with replacement}

    Note $t\mapsto M_{1,t}$ is a measurable map from $\R^n$ to $\mathcal P(\R)$, defined independent of $k$. Let $w_n$ be the image measure of $\nu = \nu^{(n)}$ under this map, and let $t_{i_1}$ represent the outcome of this draw. Therefore \begin{align*}
        \int_{\mathcal P(\R)} \Bigl(\int_{\R} x_1^2 \,d\theta\Bigr)^k \,dw_n & = \int_{\R^n} \Bigl(\int_{\R} x_1^2 \,dM_{1,t}\Bigr)^k \,d\nu(t) \\
        & = \int_{\R^n} \biggl(\frac{t_1^2 + \dotsb + t_n^2}{n}\biggr)^k \,d\nu(t) = \frac{1}{n^k} \int \bigl(t_1^2 + \dotsm + t_n^2\bigr)^k \,d\nu(t),
    \end{align*} and thus proving the equivalence between \eqref{eq:cond-2nd-mmt-approx} and \eqref{eq:exchange-2nd-mmt-approx}.
    \end{proof}

    \begin{rem*}
        When $k = 1$, \eqref{eq:cond-2nd-mmt-approx} and \eqref{eq:exchange-2nd-mmt-approx} not only converge to $0$, but are exactly $0$. This explains why we defined $M_{1,t}$ this way. Clearly it is also true that \[
            \int x_1^{2k}\,d\nu = \int_{\mathcal P(\R)} \Bigl(\int_{\R} x_1^{2k} \,d\theta\Bigr) \,dw_n.
        \]
        \iffalse
        When $k = 2$, \[
            \int x_1^2x_2^2 \,d\nu - \frac{1}{n^2} \int \bigl(x_1^2 + \dotsm + x_n^2\bigr)^2\,d\nu = \frac{1}{n} \biggl(\int x_1^2x_2^2\,d\nu - \int x_1^4\,d\nu\biggr).
        \] Assume $\int x_1^4\,d\nu$ is bounded for all $n$, then we may conclude from \cref{lem:exchange-2nd-mmt-approx} that \begin{equation} \label{eq:2nd-mmt-1n-speed}
            \int x_1^2x_2^2\,d\nu - \int_{\mathcal P(\R)} \Bigl(\int x_1^2\,d\theta\Bigr)\,dw_n = O(1/n).
        \end{equation} \fi
    \end{rem*}
The next two lemmas will now allow us to connect \cref{lem:exchange-2nd-mmt-approx} and the conditions in \cref{thm:main}.
\begin{lem} \label{lem:relax-4th-moment}
    For fixed $d$, $\sup_n \E(X_1^4\dotsm X_d^4) < \infty$ is equivalent to \cref{thm:main} \cref{cond:2nd-4th-compare} for a constant $L$: for all $1 \leq r\leq d$, there exists some constant $L$ such that for all large enough $n$, \[
        \E(X_1^4\dotsm X_r^4 X_{r+1}^2\dotsm X_{2d - r}^2) \leq L^r.
    \]
\end{lem}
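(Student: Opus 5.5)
Both directions reduce to comparing mixed moments of the exchangeable nonnegative variables $Y_\alpha = X_\alpha^2$ using \cref{lem:exchangeable-Schur-cvx}. The moment in \cref{cond:2nd-4th-compare} has exponent vector (in terms of $Y$)
\[
(2,\dotsc,2,1,\dotsc,1,0,\dotsc,0),
\]
with $r$ twos and $2d-2r$ ones. The moment $\E(X_1^4\dotsm X_d^4)$ has exponent vector $(2,\dotsc,2,0,\dotsc,0)$ with $d$ twos. Both vectors have total sum $2d$.

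For the direction $\sup_n \E(X_1^4\dotsm X_d^4) < \infty \Rightarrow$ \cref{cond:2nd-4th-compare}, I would use majorization. The vector with $d$ twos majorizes every vector with $r$ twos and $2d-2r$ ones, since its partial sums are maximal among vectors with entries in $\{0,1,2\}$ and fixed sum $2d$. This is exactly the right inequality of \eqref{eq:moment-cvx}, so
\[
\E(X_1^4\dotsm X_r^4 X_{r+1}^2\dotsm X_{2d-r}^2) \leq \E(X_1^4\dotsm X_d^4) \leq B \coloneqq \sup_n \E(X_1^4\dotsm X_d^4)
\]
for all $1\le r\le d$. Now I choose the constant $L = \max\{B,1\}$. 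Then $L^r \geq L \geq B$ for every $r \geq 1$, so the bound holds for all $n$, not just large $n$. Because $d$ and $L$ are both fixed, $Ld^2/n \to 0$ automatically, so this $L$ is admissible in \cref{thm:main}.

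For the converse, I would take $r = d$ in \cref{cond:2nd-4th-compare}. Then $2d - r = d$, so there are no squared factors, and the condition reads exactly $\E(X_1^4\dotsm X_d^4) \leq L^d$ for all large $n$. To get a supremum over all $n$, I also need each of the finitely many remaining $n$ to give a finite value. I would handle these by noting that the standing integrability assumption makes each such moment finite. Alternatively, one can read "$\sup_n$" as "$\limsup_n$", which is all that is used later.

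The only mildly delicate point is this finiteness issue for small $n$, together with the requirement in \cref{lem:exchangeable-Schur-cvx} that the expectations involved be finite. Both are resolved by the same observation: the majorization step bounds every mixed moment by the single moment $\E(X_1^4\dotsm X_d^4)$. So finiteness of that one moment suffices for the lemma's hypothesis in the forward direction. No Diaconis--Freedman approximation is needed for this lemma.
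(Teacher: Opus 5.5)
Your proposal is correct and matches the paper's proof: the converse is the case $r=d$ of \cref{cond:2nd-4th-compare}, and the forward direction is the right-hand inequality of \eqref{eq:moment-cvx} with a constant $L\geq 1$ bounding $\sup_n \E(X_1^4\dotsm X_d^4)$, so that $L\leq L^r$. Your remark about the finitely many small $n$, where \cref{cond:2nd-4th-compare} says nothing, covers a point the paper passes over in silence. Also, the Schur-convexity comparison needs no a priori finiteness: Muirhead's inequality holds pointwise for the symmetrized nonnegative integrands, which removes the slight circularity in your last paragraph.
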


\begin{proof}
    \Cref{cond:2nd-4th-compare} in particular gives us \[
    \E(X_1^4\dotsm X_d^4) \leq L^d,
\] which proves the uniform boundedness of $\E(X_1^4\dotsm X_d^4)$.

Now suppose $\E(X_1^4\dotsm X_d^4)$ is uniformly bounded, which allows us to always pick some constant $L\geq 1$ such that $\sup_n \E(X_1^4\dotsm X_d^4) \leq L$. Therefore by \cref{lem:exchangeable-Schur-cvx}, for each $d$ and $r$, \[
    \E(X_1^4\dotsm X_r^4 X_{r+1}^2\dotsm X_{2d - r}^2) \leq \E(X_1^4\dotsm X_d^4) \leq L \leq L^r.\qedhere
\]
\end{proof}

\begin{lem} \label{lem:suff-cond-prod-approx}
    Let $k$ be fixed, if $\nu = \nu^{(n)}$ satisfies $\sup_n \int x_1^{2k} \,d\nu < \infty$, then \eqref{eq:cond-2nd-mmt-approx} holds.
\end{lem}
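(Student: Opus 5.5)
Expanding the $k$-th power is the natural route: since $k$ is fixed, only finitely many moment types appear, and all but the distinct-index one carry a vanishing combinatorial weight. Write
\[
  \frac{1}{n^k}\int (x_1^2+\dotsb+x_n^2)^k\,d\nu = \frac{1}{n^k}\sum_{(\alpha_1,\dotsc,\alpha_k)\in[n]^k}\int x_{\alpha_1}^2\dotsm x_{\alpha_k}^2\,d\nu .
\]
Split the index tuples into those with all $\alpha_j$ distinct and those with at least one repetition.

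For the distinct tuples, exchangeability makes every term equal to $\int x_1^2\dotsm x_k^2\,d\nu$. There are $n(n-1)\dotsm(n-k+1)$ such tuples, so their contribution is
\[
  \frac{n(n-1)\dotsm(n-k+1)}{n^k}\int x_1^2\dotsm x_k^2\,d\nu .
\]
The prefactor tends to $1$ because $k$ is fixed; this is also the easy case of \cref{lem:sqrt-n-lim}.

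Hence the difference in \eqref{eq:cond-2nd-mmt-approx} equals
\[
  \Bigl(1-\tfrac{n\dotsm(n-k+1)}{n^k}\Bigr)\int x_1^2\dotsm x_k^2\,d\nu \;-\; \frac{1}{n^k}\sum_{\text{repeated}}\int x_{\alpha_1}^2\dotsm x_{\alpha_k}^2\,d\nu .
\]
The number of repeated tuples is $n^k-n(n-1)\dotsm(n-k+1)=O(n^{k-1})$, with the constant depending only on $k$. So it suffices to bound every mixed moment $\int x_{\alpha_1}^2\dotsm x_{\alpha_k}^2\,d\nu$ uniformly in $n$ and in the tuple; both error terms are then $O(1/n)\to 0$.

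The uniform bound on the mixed moments is the only real step. A repeated tuple gives $\int x_1^{2a_1}\dotsm x_j^{2a_j}\,d\nu$ with $a_1+\dotsb+a_j=k$. Apply \cref{lem:exchangeable-Schur-cvx} to the nonnegative exchangeable variables $x_1^2,\dotsc,x_n^2$: the exponent vector $(a_1,\dotsc,a_j,0,\dotsc,0)$ is majorized by $(k,0,\dotsc,0)$, so the moment is at most $\int x_1^{2k}\,d\nu$. Alternatively, the generalized H\"older inequality with exponents $k/a_i$, together with exchangeability, gives the same bound directly. In either form this also covers the distinct case and handles finiteness. Thus every term is bounded by $\sup_n\int x_1^{2k}\,d\nu<\infty$, which completes the argument.
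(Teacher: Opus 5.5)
Your proposal is correct and follows the paper's proof essentially step for step: split into distinct and repeated index tuples, use \cref{lem:exchangeable-Schur-cvx} to bound every mixed moment by $\int x_1^{2k}\,d\nu$, and note that both pieces carry the vanishing factor $1 - n\dotsm(n-k+1)/n^k$. Your generalized H\"older alternative is a small bonus, since it also shows the mixed moments are finite, which \cref{lem:exchangeable-Schur-cvx} assumes rather than proves.
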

In fact, assuming the stronger $k = o(\sqrt n)$ or $\sup_n \int x_1^{2k} = o(n)$ is also sufficient for \eqref{eq:cond-2nd-mmt-approx}.

\begin{proof}
    Expanding the left-hand side of \eqref{eq:cond-2nd-mmt-approx}, by exchangeability, we obtain \begin{equation}
         \biggl(1- \frac{n\dotsm (n-k+1)}{n^k}\biggr) \int x_1^2\dotsm x_k^2 \,d\nu - \frac{1}{n^k}\sum_{\stackrel{{i_1},\dotsc,{i_k}\in [n]}{\text{not distinct}}} \int x_{i_1}^2 \dotsm x_{i_k}^2\,d\nu \label{eq:mixture-criterion}
    \end{equation} % As $n \to \infty$, under $d = o(\sqrt n)$, the right-hand side would asymptotically be \[
    %     -\frac{1}{n^k} \sum_{\text{some }t_i =t_j} t_1^2\dotsm t_k^2.
    % \] The sum contains $o(n^k)$ terms, and $\int t_1^2\dotsm t_k^2\,d\nu$
    Recall \cref{lem:exchangeable-Schur-cvx} tells us that for any ${i_1},\dotsc,{i_k} \in [n]$, it holds that \[\int_{\R^n} x_{i_1}^2\dotsm x_{i_k}^2\,d\nu \leq \int_{\R^n} x_1^{2k}\,d\nu.\] Therefore the first part in \eqref{eq:mixture-criterion} converges to $0$, since we are multiplying a bounded integral by $1 - \frac{n\dotsm (n-k+1)}{n^k}$, which converges to $0$ by \cref{lem:sqrt-n-lim}.

    Thanks to the two lemmas, a similar argument applies to the second part: \begin{align*}
        \frac{1}{n^k} \sum_{i_1,\dotsc,i_k \text{ not distinct}} \int_{\R^n}t_{i_1}^2\dotsm t_{i_k}^2 \,d\nu(t) & = \biggl(1 - \frac{n\dotsm (n-k+1)}{n^k}\biggr) O(1)
    \end{align*} which converges to $0$.
\end{proof}

% Let $(X_1,\dotsc,X_n)\sim \nu$. For each $n$, let $k = 2d$. Then we can pick an associated measure $w_n$ on $\mathcal P(\R)$ such that \[
%     \bigl\lVert{(p_{2d})_*\nu - \otimes^{2d} \mu_\theta \,dw_n(\theta)}\bigr\lVert \leq \frac{2d(2d-1)}{n},
% \] which goes to $0$ as $n \to \infty$. We claim that for any $k \leq 2d$, it holds as well that \[
%     \bigl\lVert{(p_{k})_*\nu - \otimes^{k} \mu_\theta \,dw_n(\theta)}\bigr\lVert \leq \frac{2d(2d-1)}{n} \to 0.
% \] This is because \[
%     \bigl\lVert{(p_k)_* \mu_1 - (p_k)_* \mu_2} \bigr\rVert = \sup_{B \subseteq \R^{k}} \mu_1\bigl(p_k^{-1} B\bigr) - \mu_2\bigl(p_k^{-1} B\bigr) \leq \sup_{A} \mu_1(A) -\mu_2(A) = \nm{\mu_1 - \mu_2}.
% \]

\begin{proof}[{Proof of \cref{thm:finite-case}}]
By \cref{lem:exchangeable-Schur-cvx}, $\E X_1^{4d}$ is bounded implies $\E(X_1^4\dotsm X_d^4)$ is bounded, which is equivalent to \cref{thm:main} \cref{cond:2nd-4th-compare} for constant $L$. By Jensen's inequality we also know $\E X_1^2$ and $\E X_1^4$ are bounded. Therefore combining \cref{lem:suff-cond-prod-approx} and \cref{lem:exchange-2nd-mmt-approx}, there is a sequence of probability measures $\{w_n\}$ on $\mathcal P(\R)$ such that % \dm{The expectation sign below got switched} \fc{this is deliberate}
\[
    \int x_1^2\,d\nu - E_{w_n} \Bigl(\int_{\R} x_1^2 \,d\theta\Bigr) = 0\text{, } \int x_1^2x_2^2 \,d\nu - E_{w_n} \Bigl(\int_{\R} x_1^2 \,d\theta\Bigr)^{2} \to 0
\] and \begin{equation}
    \int x_1^2\dotsm x_d^2\,d\nu - E_{w_n}\Bigl(\int_{\R} x_1^2 \,d\theta\Bigr)^d \to 0\text{, } \int x_1^2\dotsm x_{2d}^2 \,d\nu - E_{w_n} \Bigl(\int_{\R} x_1^2 \,d\theta\Bigr)^{2d} \to 0. \label{eq:written-out-d-2d-mmt}
\end{equation} Recall $E_{w_n}$ stands for $\int_{\mathcal P(\R)} \blank\,dw_n(\theta)$. Therefore using our assumptions, \[
    E_{w_n} \Bigl(\int x_1^2 \,d\theta\Bigr) \to 1\text{ and } E_{w_n} \Bigl(\int x_1^2 \,d\theta\Bigr)^2 \to 1.
\] % This implies that $w_n\{\theta: \abs{\int x_1^2\,d\theta - 1} > \delta\} \to 0$ for any $\delta > 0$.

We will write $f(\theta) = \int x_1^2\,d\theta$ from now on. Recall each $w_n$ is a probability measure on the Polish space $\mathcal P(\R)$ endowed with the topology of weak convergence. We can then realize each $w_n$ and $\nu^{(n)}$ as the distribution of the random measure $Y_n$ and the random variable $\mathbf X^{(n)}$ on the common probability space $[0,1]$ with the uniform probability measure, which we continue to write as $\Pr$. Hence \[    
    \E f(Y_n) \to 1\text{ and } \E f(Y_n)^2 \to 1.
\] This implies that the random variable $f(Y_n)\to 1$ in probability, and hence for fixed $d$, $f(Y_n)^{2d} \to 1$ in probability as well. Since \begin{align*}
    \sup_n \E \bigl(\abs{f(Y_n)}^{2d + \epsilon/2}\bigr) & = \sup_n E_{w_n} \Bigl(\int_\R \abs{x_1}^2\,d\theta\Bigr)^{2d + \epsilon / 2} \\ & \leq \sup_n E_{w_n} \Bigl(\int_\R \abs{x_1}^{4d+\epsilon}\,d\theta\Bigr) = \sup_n \int \abs{x_1}^{4d+\epsilon}\,d\nu < \infty,
\end{align*} it follows that $f(Y_n)^{2d}$ must be uniformly integrable. Thus, $\E f(Y_n)^{2d} = E_{w_n} \bigl(\int x_1^2 \,d\theta\bigr)^{2d} \to 1$, and also $\E f(Y_n)^{d} = E_{w_n} \bigl(\int x_1^2 \,d\theta\bigr)^{d}\to 1$.

By \eqref{eq:written-out-d-2d-mmt}, it is now immediate that $\int x_1^2\dotsm x_d^2\,d\nu \to 1$ and $\int x_1^2\dotsm x_{2d}^2\,d\nu \to 1$, which recover \cref{cond:expec,cond:var} in \cref{thm:main}.
\end{proof}

\section{Theorem~\ref{thm:the-norm-cond} for diverging \texorpdfstring{$d$}{d}} \label{sec:growing-d}
% \dm{This is a bit out of the blue, should it be part of the next section?}
Previously, in \cref{lem:exchange-2nd-mmt-approx}, we have seen that \eqref{eq:cond-2nd-mmt-approx} was a crucial part of the proof. Now, when $d$ is allowed to depend on $n$, instead of relying on an explicit approximation by a mixture, we establish \eqref{eq:cond-2nd-mmt-approx} directly. We start with a sufficient condition for \eqref{eq:cond-2nd-mmt-approx}.
\begin{lem} \label{lem:2-norm-conv-cond}
     Let $k = k_n$ and $L = L_n$ satisfy $Lk^2 / n \to 0$. Assume that $\E X_1^2 = 1$, $\frac{1}{n^2} \E\nm{\mathbf X}_4^8  \leq L^2$ for all $n$, and also  
    \begin{equation}
        \frac{1}{n^{2k}} \E\nm{\mathbf X}_2^{4k} \to 1. \label{eq:normalized-norm-converge}
    \end{equation} Then, for any $1 \leq r \leq k$, \begin{equation}
        \frac{1}{n^{2r}} \E\nm{\mathbf X}_2^{4r} \to 1. \label{eq:conv-smaller-norm-mmts}
    \end{equation}
    Moreover, we also have \begin{equation} \label{eq:subexponential-approx-norm-full-mix} \E(X_1^2 \dotsm X_k^2) - \frac{1}{n^k} \E\nm{\mathbf X}_2^{2k} \to 0,\end{equation} which implies \[
        \E(X_1^2 \dotsm X_k^2) \to 1.
    \]
\end{lem}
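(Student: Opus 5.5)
Write $S = \nm{\mathbf X}_2^2/n$. Since $\E X_1^2 = 1$ and $\mathbf X$ is exchangeable, $\E S = 1$. Hypothesis \eqref{eq:normalized-norm-converge} says $\E S^{2k}\to 1$.

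\emph{Step 1: smaller moments.} By Jensen's inequality, $\E S^{q} \geq (\E S)^q = 1$ for every $q \geq 1$. By Lyapunov's inequality, $(\E S^{q})^{1/q} \leq (\E S^{2k})^{1/(2k)}$ for $q \leq 2k$. Hence for every $1 \leq q \leq 2k$,
\[
1 \leq \E S^q \leq \max\{1,\E S^{2k}\},
\]
and the right-hand side tends to $1$. Taking $q = 2r$ gives \eqref{eq:conv-smaller-norm-mmts}. Taking $q = k$ gives $\E S^k \to 1$. It also shows that all moments $\E S^q$ with $q \leq 2k$ are uniformly bounded, and this is used in Step 2.

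\emph{Step 2: comparing the mixed moment with $\E S^k$.} I expand $n^k \E S^k = \sum_{i_1,\dotsc,i_k\in[n]} \E(X_{i_1}^2\dotsm X_{i_k}^2)$ and split it into tuples with distinct entries and the rest. By exchangeability, the distinct tuples contribute exactly $n(n-1)\dotsm(n-k+1)\,\E(X_1^2\dotsm X_k^2)$.

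For the remaining tuples, I use a union bound over a colliding pair of positions. Every non-distinct tuple has some pair $a<b$ with $i_a=i_b$. Summing pointwise over the tuples with $i_a = i_b$ gives $\nm{\mathbf X}_4^4\nm{\mathbf X}_2^{2(k-2)}$, so the non-distinct tuples contribute at most $\binom k2\E\bigl[\nm{\mathbf X}_4^4\nm{\mathbf X}_2^{2k-4}\bigr]$. By Cauchy--Schwarz and the hypothesis $\E\nm{\mathbf X}_4^8 \leq L^2 n^2$,
\[
\frac{1}{n^k}\binom k2\E\bigl[\nm{\mathbf X}_4^4\nm{\mathbf X}_2^{2k-4}\bigr]\leq \frac{k^2}{2n^k}\,(Ln)\,n^{k-2}\bigl(\E S^{2k-4}\bigr)^{1/2} = \frac{Lk^2}{2n}\bigl(\E S^{2k-4}\bigr)^{1/2}.
\]
This tends to $0$ because $Lk^2/n\to0$ and $\E S^{2k-4}$ is bounded by Step 1. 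The cases $k=1,2$ are trivial or immediate.

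\emph{Step 3: conclusion.} Since $\E(X_1^2\dotsm X_k^2) \leq \E S^k$ is bounded, it remains to show the falling-factorial ratio $n(n-1)\dotsm(n-k+1)/n^k \to 1$, which is the main obstacle. To apply \cref{lem:sqrt-n-lim} I need $k = o(\sqrt n)$, which follows once $L$ is bounded below. By Jensen, $\E\nm{\mathbf X}_4^8/n^2 \geq (\E X_1^4)^2 \geq (\E X_1^2)^4 = 1$, so $L \geq 1$. Hence \cref{lem:sqrt-n-lim} applies and the ratio tends to $1$.

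Combining the three steps yields \eqref{eq:subexponential-approx-norm-full-mix}, and together with $\E S^k\to1$ from Step 1 this gives $\E(X_1^2\dotsm X_k^2)\to 1$.
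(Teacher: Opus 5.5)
Your proof is correct and is essentially the paper's argument. Step 1 is the same Jensen/Lyapunov sandwich. Your split into distinct versus colliding index tuples is what the paper encodes with independent uniform indices $I_1,\dotsc,I_k$ and the event $A$ that they are distinct, and it bounds the collision part by the same union bound over pairs and Cauchy--Schwarz against $\frac{1}{n^2}\E\nm{\mathbf X}_4^8\le L^2$.

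The only difference is the closing step. The paper uses the Schur-convexity bound $\E(X_1^2\dotsm X_k^2)\le \E S^k$ from \cref{lem:exchangeable-Schur-cvx}; this is also what justifies the inequality you assert without proof in Step 3. With it, $0\le \E S^k-\E(X_1^2\dotsm X_k^2)\le$ your Step 2 collision term directly. So the paper never needs $n(n-1)\dotsm(n-k+1)/n^k\to 1$, nor your (valid) observation that $L\ge 1$ forces $k=o(\sqrt n)$.
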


\begin{proof}
We first show \eqref{eq:conv-smaller-norm-mmts}. By Jensen's inequality, \[
        \bigl(\E X_1^2\bigr)^{2r} = \frac{1}{n^{2r}} \bigl(\E\nm{\mathbf X}_2^2\bigr)^{2r}\leq \frac{1}{n^{2r}}\E\nm{\mathbf X}_2^{4r},
    \] where the first equality is due to exchangeability. Therefore, $1 \leq \liminf_n \frac{1}{n^{2r}} \E\nm{\mathbf X}_2^{4r}$. In addition, for any $1\leq r \leq k$, \[
    \biggl\Vert\frac{\nm{\mathbf X}_2^2}{n}\biggr\Vert_{L^{2r}(\Pr)} \leq \biggl\Vert\frac{\nm{\mathbf X}_2^2}{n}\biggr\Vert_{L^{2k}(\Pr)}
\] Hence, since $r \leq k$, we have \begin{equation} \label{eq:non-asymp-bound-r-mmt}
    \frac{1}{n^{2r}} \E\nm{\mathbf X}_2^{4r} \leq \biggl(\frac{1}{n^{2k}} \E\nm{\mathbf X}_2^{4k}\biggr)^{\frac{r}{k}} \leq \max\biggl\{\frac{1}{n^{2k}} \E\nm{\mathbf X}_2^{4k},1\biggr\},
\end{equation} which implies $\limsup_n \frac{1}{n^{2r}} \E\nm{\mathbf X}_2^{4r} \leq 1$.

For the second part of the lemma, we introduce independent uniform random indices $I_1,\dotsc,I_k$, chosen uniformly at random from $[n]$. Let $\mathbf y = (y_1,\dotsc,y_n) \in \R^n$ be a deterministic vector, and define the random variable $M(\mathbf y) = \prod_{j=1}^k y_{I_j}^2$. By independence, \[\E M(\mathbf y) = \bigl(\E y_{I_1}^2\bigr)^k = \frac{1}{n^k}\bigl(y_1^2 + \dotsm +y_n^2\bigr)^k = \frac{1}{n^k} \nm{\mathbf y}_2^{2k}.\] Therefore, for $\mathbf X \sim \nu$ independent of the random indices, \[
    \E M(\mathbf X) = \E\bigl(\E( M(\mathbf X)\giv\mathbf X)\bigr) = \frac{1}{n^k} \E\nm{\mathbf X}_2^{2k}.
\] Let $A = \{\text{for all }j\neq j'\text{, }I_j\neq I_{j'}\}$, then $\E(M(\mathbf X)\giv A) = \E(X_1^2\dotsm X_k^2)$. Recall \cref{lem:exchangeable-Schur-cvx} gives us \[
    \E M(\mathbf X) - \E (M(\mathbf X)\giv A) \geq 0.
\] Therefore, if we can show \[
    \E M(\mathbf X) - \E (M(\mathbf X)\giv A) \leq \E M(\mathbf X) - \Pr(A)\E (M(\mathbf X)\giv A) = \E(M(\mathbf X)\ind_{A^\cpl}) \to 0,\] then \eqref{eq:subexponential-approx-norm-full-mix} follows.
Notice that $\ind_{A^\cpl} \leq \sum_{j < j'} \ind_{I_j = I_{j'}}$, which implies \[\E (M(\mathbf X)\ind_{A^\cpl}) \leq \sum_{j < j'}\E(M(\mathbf X)\ind_{I_j = I_{j'}}).\] Now for each summand, \begin{align*}
	\E(M(\mathbf X)\ind_{I_j = I_{j'}}) &= \E\bigl(\E(M(\mathbf X)\ind_{I_j = I_{j'}}\giv \mathbf X)\bigr)= \Pr(I_j = I_{j'})\E\Bigl(\E\bigl(X^4_{I_j} \bigm\vert \mathbf X\bigr)\E\bigl(\textstyle\prod_{k\notin\{j,j'\}} X^2_{I_k}\bigm\vert \mathbf X\bigr)\Bigr)\\
	&=\frac{1}{n}\E\biggl(\Bigl(\frac{1}{n}\sum X_i^4\Bigr)\Bigl(\frac{1}{n}\sum X_i^2\Bigr)^{k-2}\biggr)\\
	&= \frac{1}{n}\E\biggl(\Bigl(\frac{1}{n}\sum X_i^4\Bigr)\frac{\nm{\mathbf X}_2^{2k-4}}{n^{k-2}}\biggr)\\
	&\leq \frac{1}{n}\sqrt{\frac{1}{n^2}\E\nm{\mathbf X}_4^8}\sqrt{\frac{1}{{n^{2k-4}}}\E\nm{\mathbf X}_2^{4k-8}}.
\end{align*}
Therefore, by our assumption $\frac{1}{n^2}\E\nm{\mathbf X}_4^8 \leq L^2$, we have the upper bound 
\begin{align*}
	\E(M(\mathbf X)\ind_{A^\cpl}) & \leq \frac{k(k-1)}{2n}\sqrt{\frac{1}{n^2}\E\nm{\mathbf X}_4^8}\sqrt{\frac{1}{{n^{2k-4}}}\E\nm{\mathbf X}_2^{4k-8}} \\
    & \leq \frac{k^2 L }{n} \sqrt{\frac{1}{{n^{2k-4}}}\E\nm{\mathbf X}_2^{4k-8}}.
\end{align*} The square root in the second line converges to $1$ (and hence bounded) by \eqref{eq:conv-smaller-norm-mmts}. Thanks to $Lk^2/n \to 0$, it follows that $\E(M(X)\ind_{A^\cpl}) \to 0$, which proves the second part of the lemma.
\end{proof}

%     for any $n$, \[
%         \bigl\Vert\nm{\mathbf X}_{2} -\E\nm{\mathbf X}_2\bigr\Vert_{L^{2k}(\Pr)} \leq C\cdot (2k)
%     \] for some constant $C$. Since $\E\nm{\mathbf X}_2 \leq \sqrt{\E\nm{\mathbf X}_2^2} = \sqrt n$, we can conclude \[
%         \bigl(\E\nm{\mathbf X}_2^{2k}\bigr)^{1/2k} \leq 2Ck + \sqrt n.
%     \] This leads to \[
%         \frac{1}{n^{k}}\E\nm{\mathbf X}_2^{2k} \leq  \biggl(\frac{2Ck}{\sqrt n} + 1\biggr)^{2k}.
%     \] Since $k = o(\sqrt n)$, we have \[
%     \limsup_n \frac{1}{n^k}\E{\nm{\mathbf X}_2^{2k}} \leq 1,
% \]
We remark that $\sqrt{\E \bigl(\frac{1}{n}\sum X_i^4\bigr)^2} = \nm{\frac{1}{n}\sum X_i^4}_{L^2(\Pr)} \leq \frac{1}{n} \cdot n \nm{X_1^4}_{L^2(\Pr)} =  \sqrt{\E X_1^8 }$, so it is sufficient to assume $\E X_1^8 \leq L^2$ when stating this previous lemma. % \fc{may want to mention that the reduction to 2-norm is a tight result}

One might notice that we have dropped the asymptotic correlation condition \[\E(X_1^2X_2^2) - \E(X_1^2)\E(X_2^2) \to 0,\] but are still able to conclude that the convergence of the block moment $\E(X_1^2 \dotsm X_k^2) \to 1$. This is because our new condition implies $\E(X_1^2X_2^2) \to 1$: Take $r = 1$ in \eqref{eq:conv-smaller-norm-mmts} and expand, we would arrive at \[
    \frac{1}{n}\E X_1^4 + \frac{n-1}{n}\E(X_1^2X_2^2) \to 1.
\] Provided that $\E X_1^4 = o(n)$, we have $\E(X_1^2X_2^2) \to 1$.

We are now ready for the proof of \cref{thm:the-norm-cond}.

\begin{proof}[Proof of \cref{thm:the-norm-cond}]
Taking $r = 1$ in \cref{cond:8th-block-mmt} gives $\E X_1^8 \leq L^2$, which implies $\frac{1}{n^2}\E\nm{\mathbf X}_4^8 \leq L^2$ by the comment after the previous lemma. Hence, \cref{thm:main} \cref{cond:expec,cond:var} follow from \cref{cond:2-norm-conv} and the previous lemma. The same argument remains in force if we take $r = 1$ in \cref{cond:4-norm-bdd}.

Let us first assume \cref{cond:8th-block-mmt} and establish \cref{cond:2nd-4th-compare} of \cref{thm:main}.
By \cref{lem:exchangeable-Schur-cvx}, it is straightforward to verify that 
\begin{equation}\begin{split}
    \E(X_1^4\dotsm X_r^4X_{r+1}^2\dotsm X_{2d-r}^2) & \leq \frac{1}{n^{2d-2r}}\E\Bigl(\bigl(X_1^4\dotsm X_r^4\bigr)\bigl( X_1^2 + \dotsb + X_n^2\bigr)^{2d-2r}\Bigr) \\
    & \leq \frac{1}{n^{2d-2r}} \sqrt{\E\bigl(X_1^8 \dotsm X_r^8\bigr)}\sqrt{\E\bigl(X_1^2 +\dotsb +  X_n^2\bigr)^{4d - 4r}}.\end{split} \label{eq:CS-upbd}
    \end{equation} Therefore, to recover \cref{cond:2nd-4th-compare}, it suffices to show for some $L$ that \begin{equation}
    \E(X_1^8\dotsm X_r^8) \frac{1}{n^{4d - 4r}}\E\nm{\mathbf X}_2^{8d-8r} \leq L^{2r}. \label{eq:lhs-Cauchy-Schwarzed}
\end{equation}

Meanwhile, by \cref{lem:2-norm-conv-cond} and \eqref{eq:non-asymp-bound-r-mmt}, our \cref{cond:2-norm-conv} implies that for all large enough $n$ and all corresponding $1\leq r\leq d$, \[
    \frac{1}{n^{4d - 4r}}\E\nm{\mathbf X}_2^{8d-8r}\leq \max\biggl\{\frac{1}{n^{4d}}\E\nm{\mathbf X}_2^{8d},1\biggr\} \leq 1.1.
\]
Thus, if we can show for all large enough $n$, and all $1\leq r\leq d$, that there exists some $L$ such that \[
    \E(X_1^8\dotsm X_r^8) \leq \frac{1}{1.1} L^{2r},
\] our desired estimate in \eqref{eq:lhs-Cauchy-Schwarzed} follow. But this is precisely \cref{cond:8th-block-mmt} when replacing $L$ by $1.1L$, and so \cref{cond:2nd-4th-compare} holds.

To finish the proof, we also need to show \cref{cond:4-norm-bdd} implies \cref{cond:2nd-4th-compare}.
Thus, we again apply \cref{lem:exchange-2nd-mmt-approx} and replace the estimate in \eqref{eq:CS-upbd} by \begin{align*}
    \E(X_1^4\dotsm X_r^4X_{r+1}^2\dotsm X_{2d-r}^2) & \leq \frac{1}{n^{r}}\frac{1}{n^{2d-2r}}\E\Bigl(\bigl(X_1^4 + \dotsb +X_n^4\bigr)^r\bigl( X_1^2 + \dotsb + X_n^2\bigr)^{2d-2r}\Bigr) \\
    & \leq \frac{1}{n^{r}}\frac{1}{n^{2d-2r}} \sqrt{\E\bigl(X_1^4 + \dotsb +X_n^4\bigr)^{2r}}\sqrt{\E\bigl(X_1^2 +\dotsb +  X_n^2\bigr)^{4d - 4r}}.
    \end{align*} Repeating the same argument above leads to \[
        \frac{1}{n^{2r}} \E\nm{\mathbf X}_4^{8r} \leq \frac{1}{1.1} L^{2r},
    \] and hence \cref{cond:4-norm-bdd} is also sufficient for applying \cref{thm:main}. % \dm{was condition (ii) ever used?}
\end{proof}

The conditions in \Cref{thm:the-norm-cond}\ref{cond:2-norm-conv}\ref{cond:4-norm-bdd} can be achieved by imposing concentration properties on the norm, which is the content of \cref{prop:subexponential-Gaussian-norm}.
% The proof for $d = o(\sqrt n)$ and uniformly bounded log-Sobolev constant is precisely the same. If $\mathbf X$ satisfies the log-Sobolev inequality, then we would have sub-Gaussian concentration for Lipschitz functions of $\mathbf X$.

% \begin{prop}
%     Assume $d = o(\sqrt n)$. Suppose \begin{enumerate}[label=(\alph*')]
%         \item $\E(X_1^2) = 1$,
%         \item $\E X_1^4$ is uniformly bounded,
%         \item $\nm{\mathbf X}_2$ is subexponential with proxy variance uniformly bounded for all $n$, and 
%         \item $\nm{\mathbf X}_4^2$ is subexponential with proxy variance uniformly bounded for all $n$.
%     \end{enumerate} Then \cref{cond:expec,cond:var,cond:2nd-4th-compare} follows. \dmL{can we relax these conditions if now we $L$ to grow with $n$? say $L = n^{\varepsilon}$. FC: Most likely the fourth moment may be allowed to grow with $n$ instead of being uniformly bounded} 
% \end{prop} 
\begin{proof}[Proof of \cref{prop:subexponential-Gaussian-norm}]
To take advantage of the relationship between $d$ and $n$, we use the following trick: 
    \begin{equation} \begin{split}
        \E\biggl(\frac{\nm{\mathbf X}_2^{8d}}{n^{4d}}\biggr) &= \E\exp\biggl(8d\log\Bigl(\frac{\nm{\mathbf X}_2}{\sqrt{n}}\Bigr)\biggr) \leq \E\exp\biggl(8d\Bigl(\frac{\nm{\mathbf X}_2}{\sqrt{n}}-1\Bigr)\biggr) \\ 
 	& =\E \exp\biggl(\frac{8d}{\sqrt{n}}(\nm{\mathbf X}_2-\sqrt{n})\biggr) \\
    & =\exp\biggl(\frac{8d}{\sqrt{n}}(\E\nm{\mathbf X}_2 - \sqrt n)\biggr) \E\exp\biggl(\frac{8d}{\sqrt{n}}(\nm{\mathbf X}_2-\E\nm{\mathbf X}_2)\biggr) \\
    & \leq \E\exp\biggl(\frac{8d}{\sqrt{n}}(\nm{\mathbf X}_2-\E\nm{\mathbf X}_2)\biggr), \label{eq:upper-bound-normalized-norm}\end{split}
\end{equation}
where the last inequality comes from the observation  
\[
    \E\nm{\mathbf X}_2 \leq \bigl(\E\nm{\mathbf X}_2^2\bigr)^{1/2} =  \bigl(n\E X_1^2\bigr)^{1/2} = \sqrt n.
\]

Now assume $\nm{\mathbf X}_2$ is subexponential with uniformly bounded proxy variance $2\sigma^2$ for $X$, i.e., \[
    \E\exp\bigl(\lambda (\nm{\mathbf X}_2 - \E \nm{ \mathbf X}_2)\bigr) \leq \exp\bigl(\lambda^2 \sigma^2\bigr)
\] provided that $ \lambda^2 \leq 1/\sigma^2$.
Since $d^2/n = o(1)$, we can conclude from \eqref{eq:upper-bound-normalized-norm} that for all sufficiently large $n$, \[
    1\leq \frac{1}{n^{4d}}\E{\nm{\mathbf X}_2^{8d}} \leq \exp\biggl(\frac{64d^2}{n}\sigma^2\biggr),
\] which proves $\frac{1}{n^{4d}}\E \nm{\mathbf X}_2^{8d} \to 1$.

For $\frac{1}{n^{2r}} \E\nm{\mathbf X}_4^{8r} \leq L^{2r}$, we use a slightly different approach. By assumption, let $\sup_n \E X_1^4 \leq K$ for some constant $K > 0$. Saying the $\ell^4$-norm is subexponential with proxy variance $2\sigma^2$ is equivalent to saying \begin{equation}
    \bigl\Vert\nm{\mathbf X}_{4} -\E\nm{\mathbf X}_4\bigr\Vert_{L^{8r}(\Pr)} \leq C\cdot (8r) \label{eq:subexponential-Lp-cond}
\end{equation} for some constant $C$ dependent on the proxy variance $2\sigma^2$; see \cite[Proposition~2.8.1]{Vershynin_2026}. Also \[
    \E\nm{\mathbf X}_4 = \E\bigl(\bigl(X_1^4 + \dotsb + X_n^4\bigr)^{1/4}\bigr)
    \leq \bigl(n \E X_1^4\bigr)^{1/4} \leq \bigl(nK\bigr)^{1/4}.
\] Hence \[
    \bigl(\E\nm{\mathbf X}_4^{8r}\bigr)^{1/8r} = \bigl\Vert\nm{\mathbf X}_{4}\bigr\Vert_{L^{8r}(\Pr)} \leq 8Cr + (nK)^{1/4},
\] which leads to \[
    \frac{1}{n^{2r}}\E\nm{\mathbf X}_4^{8r} \leq \frac{1}{n^{2r}}\bigl(8Cr + (nK)^{1/4}\bigr)^{8r} \leq \biggl(\frac{8Cr}{n^{1/4}} + K^{1/4}\biggr)^{8r} \leq \max\biggl\{\Bigl(\frac{8Cd}{n^{1/4}} + K^{1/4}\Bigr)^4,1\biggr\}^{2r} % \leq 2^{4r}\bigl[(4Cr)^{4r} + (nL)^{2r}\bigr] = \bigl(64C^2r^2\bigr)^{2r} + (4nL)^{2r}. 
\]
If $d= o(n^{1/3})$, then \[
    L \coloneqq \max\biggl\{\Bigl(\frac{8Cd}{n^{1/4}} + K^{1/4}\Bigr)^4,1\biggr\} = o(n^{1/3})
\] as well. Since $Ld^2 / n \to 0$, the proof is complete.\footnote{The reader may have noticed that it is possible to allow the subexponential proxy variance and the fourth moment bound to grow with $n$, while still maintaining the growth condition $d = o(n^{1/3})$ that we obtained. For simplicity, we have ignored this in our proposition statement.}

If instead $\nm{\mathbf X}_4$ is sub-Gaussian with proxy variance $2\sigma^2$, then we have the stronger \[
    \bigl\Vert\nm{\mathbf X}_{4} -\E\nm{\mathbf X}_4\bigr\Vert_{L^{8r}(\Pr)} \leq C\cdot \sqrt{8r}
\] on top of \eqref{eq:subexponential-Lp-cond}; see \cite[Proposition~2.6.1]{Vershynin_2026}. Following through the same calculation leads us to \[
    \frac{1}{n^{2r}}\E\nm{\mathbf X}_4^{8r} \leq \max\biggl\{\Bigl(\frac{C\sqrt{8d}}{n^{1/4}} + K^{1/4}\Bigr)^4,1\biggr\}^{2r}.
\] Taking $d = o(\sqrt n)$, then \[
    L \coloneqq \max\biggl\{\Bigl(\frac{C\sqrt{8d}}{n^{1/4}} + K^{1/4}\Bigr)^4,1\biggr\} = O(1).
\] Again $Ld^2 / n \to 0$, and is certainly optimal since $d$ can be at most $o(\sqrt n)$.
\end{proof}

\section{Theorem~\ref{thm:tensor-power} for the the tensor power model} \label{sec:tensor-power}

\begin{proof}[Proof sketch of \cref{thm:tensor-power}]
    From our discussion in \cref{sec:ext-tensor-power}, it suffices to show that \begin{equation}\label{eq:reducedT-esd-conv}\ESD(K_{\ReducedT}) \wkconv \mu_{\MP(c)}\end{equation} with probability $1$. Let $p=p_n = \binom{n}{d}$. In the proof of \cref{thm:main} in \cref{sec:concentration-proof-of-main}, we have verified that for any sequence of PSD matrices $A_p \in \R^{p\times p}$ with $\opnm{A_p} = O(1)$, it holds for $\mathbf x\sim \PrincipalT(n,d,\mathbf X)$ that \begin{equation}
        \frac{\mathbf x^\trp A_p \mathbf x - \tr A_p}{p} \to 0 \quad\text{in probability}. \label{eq:prinipalT-conv-in-prob}
    \end{equation} Notice that given the same base vector $\mathbf X$, the reduced symmetric tensor $\mathbf y \sim \ReducedT(n,d,\mathbf X)$ contains the principal tensor $\mathbf x \sim \PrincipalT(n,d,\mathbf X)$ as a sub-vector. Also recall from \cref{lem:sqrt-n-lim} that if $d = o(\sqrt n)$, then $\binom{n+d-1}{d} \big/ \binom{n}{d} \to 1$. Therefore, one can expect an easy extension from \eqref{eq:prinipalT-conv-in-prob} to \begin{equation} \label{eq:reducedT-conv-in-prob}
        \frac{\mathbf y^\trp B_{\tilde p} \mathbf y - \tr B_{\tilde p}}{\tilde p} \to 0 \quad\text{in probability},
    \end{equation} where $\tilde p = \binom{n+d-1}{d}$, and $B_{\tilde p} \in \R^{\tilde p \times \tilde p}$ is a sequence of PSD matrices with $\opnm{B_{\tilde p}} = O(1)$. After establishing \eqref{eq:reducedT-conv-in-prob}, we can conclude \eqref{eq:reducedT-esd-conv} by \cref{thm:MP}.

    The full argument for \eqref{eq:reducedT-conv-in-prob} is almost exactly the same as \cite[Proposition~3.3]{Yaskov_2023}, where the base vector $\mathbf X$ was assumed to have independent components. However, we do need to verify that our exchangeable and unconditional base vector $\mathbf X$ indeed satisfy
\[
    \biggl(\frac{1}{n} \sum_{\alpha = 1}^n X_\alpha^2\biggr)^d = \frac{1}{n^{d}} \nm{\mathbf X}_2^{2d} \to 1 \quad \text{in probability.}
\]
By Chebyshev's inequality, it is sufficient to verify the $L^2$ convergence \[
    \E \biggl(\frac{\nm{\mathbf X}_2^{2d}}{n^{d}} - 1 \biggr)^2 = \frac{1}{n^{2d}}\E\nm{\mathbf X}_2^{4d} - 2 \frac{1}{n^d}\E\nm{\mathbf X}_2^{2d} + 1 \to 0.
\] This is true under the assumptions of either \cref{thm:finite-case} or \cref{thm:the-norm-cond}, where we use \cref{lem:suff-cond-prod-approx} or \cref{lem:2-norm-conv-cond}, respectively.
% The following fact is also needed. In the proof of \cref{thm:main}, if we take $A_p$ to be the identity in \eqref{eq:conv-in-prob} for any exchangeable and unconditional base vector $\mathbf X$, it holds that $\frac{1}{\binom{n}{d}} \mathbf x^\trp \mathbf x \to 1$ in probability. 
% Second, for any exchangeable and unconditional base vector $\mathbf X$, we need to show that \[\frac{\mathbf x^\trp\mathbf x}{\binom{n}{d}} = \frac{1}{\binom{n}{d}}\sum_{j \in \binom{[n]}{d}} \prod_{\alpha \in j} X_\alpha^2 \to 1 \quad\text{in probability.}\] (Previously this holds for i.i.d.\ base vectors, which we discussed in \cref{sec:two-tensor-models}.) In \cref{thm:main}, we assumed $\E(X_1^2\dotsm X_{d}^2) \to 1$, and therefore $\frac{1}{\binom{n}{d}}\E \mathbf x^\trp\mathbf x \to 1$. Also recall we showed in the proof of \cref{thm:main} that $\Var(\mathbf x^\trp \mathbf x) = o\bigl(\binom{n}{d}^2\bigr)$, and therefore $\frac{1}{\binom{n}{d}}\mathbf x^\trp\mathbf x \to 1$ in $L^2$.
\end{proof}

% We now give an outline that shows the ESD for the reduced symmetric tensor model that we defined in  the ESD for the principal tensor model are the same. First notice that when $d = o(\sqrt n)$, $\binom{n+d-1}{d} \big/ \binom{n}{d} \to 1$. Therefore we should not expect the reduced symmetric tensor model to be too different from the principal tensor model. \fc{reduced symmetric tensor model}

% To be precise, let $A_p$ be a PSD matrix with $\nm{A_p} \leq 1$. If we let $\mathbf x_p$ be the reduced symmetric tensor in $\R^{\binom{n+d-1}{d}}$, we can break up \[
%     \frac{\mathbf x_p^\trp A_p \mathbf x_p - \tr A_p}{p}
% \] into three pieces by breaking up $\mathbf x_p$ and $A_p$, and verify that all three pieces indeed converge to $0$ in probability. \dm{This is way way way too vague. Either be concrete in what we say or just state the consequence that we use.}

\section{Verification of examples} \label{sec:examples}
% \subsection{Signed permutations}

\subsection{Examples satisfying Theorem~\ref{thm:the-norm-cond}}

The results for measures satisfying the Poincar\'e and log-Sobolev inequality quickly follows from \cref{prop:subexponential-Gaussian-norm} via Lipschitz concentration.

\begin{proof}[Proof of \cref{prop:Poincare,prop:log-Sobolev}]
    Assume $\mathbf X$ has a Poincar\'e constant uniformly bounded by $C > 0$. Since the $\ell^2$ and $\ell^4$ norms are $1$-Lipschitz, by Lipschitz concentration \cite[Corollary~3.2, Proposition~1.8]{Ledoux_2001}, we have for both norms \[
        \Pr\bigl(\bigl\vert \nm{\mathbf X} - \E\nm{\mathbf X}\bigr\vert \geq t\bigr) \leq c_1\exp\biggl(\frac{-c_2t}{\sqrt C}\biggr)
    \] for any $t \geq 0$ and some absolute constants $c_1,c_2 > 0$. This is equivalent to saying that $\nm{\mathbf X}_2$ and $\nm{\mathbf X}_4$ are both subexponential with uniformly bounded proxy variance, which proves our proposition by \cref{prop:subexponential-Gaussian-norm}.

    Also note that projection to the coordinate is $1$-Lipschitz as well, and therefore by Lipschitz concentration, \[
        \Pr(\lvert X_1 - \E X_1\rvert \geq t) \leq c_1\exp\biggl(\frac{-c_2t}{\sqrt C}\biggr).
    \] Since $\E X_1 = 0$, by \cite[Proposition~2.8.1]{Vershynin_2026} this implies $\nm{X_1}_{L^4(\Pr)}\leq C_1$ for some $C_1$ dependent on $C$. Therefore, we also have uniform boundedness of the fourth moment $\sup_n \E X_1^4 < \infty$, as required in \cref{prop:subexponential-Gaussian-norm}.

    The proof for $d = o(\sqrt n)$ and $\mathbf X$ having the stronger uniformly bounded log-Sobolev constant is precisely the same. If $\mathbf X$ satisfies the log-Sobolev inequality, then we would have sub-Gaussian concentration for Lipschitz functions of $\mathbf X$; see \cite[Theorem~5.3]{Ledoux_2001}. 
\end{proof}

% Therefore \[
%     \bigl\Vert\nm{\mathbf X}_{4}^2 -\E\nm{\mathbf X}_4^2\bigr\Vert_{L^{4r}(\Pr)} \leq C\cdot (4r)
% \] for some constant $C$. Also \[
%     \E\nm{\mathbf X}_4^2 = \E\sqrt{X_1^4 + \dotsb + X_n^2} 
%     \leq \sqrt{n \E X_1^4} \leq \sqrt{nL}.
% \] Hence \[
%     \bigl(\E\nm{\mathbf X}_4^{8r}\bigr)^{1/4r} = \bigl\Vert\nm{\mathbf X}_{4}^2\bigr\Vert_{L^{4r}(\Pr)} \leq 4Cr + \sqrt{nL},
% \] which leads to \[
%     \frac{1}{n^{2r}}\E\nm{\mathbf X}_4^{8r} \leq \frac{1}{n^{2r}}\bigl(4Cr + \sqrt{nL}\bigr)^{4r} \leq \biggl(\frac{4Cr}{\sqrt n} + \sqrt{L}\biggr)^{4r}% \leq 2^{4r}\bigl[(4Cr)^{4r} + (nL)^{2r}\bigr] = \bigl(64C^2r^2\bigr)^{2r} + (4nL)^{2r}. 
% \]

% Dividing the two sides by $n^{2r}$ then gives us \[
%     \frac{1}{n^{2r}}\E\nm{\mathbf X}_4^{8r} \leq \biggl(\frac{Kr^2}{n}\biggr)^{2r} + (4L)^{2r}.
% \] 
% Since $d^2/n \to 0$, it must be the case that $\frac{4Cr}{\sqrt n} \leq \frac{4Cd}{\sqrt n} \leq 1$ for large enough $n$ and all $0\leq r\leq d$. Therefore we have proven \eqref{eq:new-target} after replacing $L$ by $(\sqrt L+1)^2$.

% & \leq 2^{4r} \bigl[\E\nm{\mathbf X}_4^{8r} + \bigl(\E\nm{\mathbf X}_4^{2}\bigr)^{4r}\bigl] \\& \leq 

We now prove \cref{prop:log-concave} for the case where $\mathbf X$ is isotropic log-concave. First, we show that such $\mathbf X$ satisfy \cref{thm:the-norm-cond} \cref{cond:2-norm-conv}.

% \begin{proof}[Proof of \cref{prop:log-concave}]
%     We first show for any $d = o(\sqrt{n /\log n})$, $\frac{1}{n^{4d}}\E\nm{\mathbf X}_2^{8d} \to 1$.
% \end{proof}

% First of all, the uniform bound on the fourth moment of the marginal is a direct consequence of Borell's lemma \dm{Need to say what Borell's lemma is and supply a reference}, which is the content of \cref{cond:uniform-4th-mmt}.

% \dm{This is a bit messy. It will be better to have a lemma showing this, and in the proof use Fleury. No need to state Fleury as a separate lemma.}We will now show that for any $d = o(\sqrt{n /\log n})$, $\frac{1}{n^{4d}}\E\nm{\mathbf X}_2^{8d} \to 1$. \fc{part 1 of a large proof environment}

\begin{lem}
    For any $d = o(\sqrt{n /\log n})$, it holds that $\frac{1}{n^{4d}}\E\nm{\mathbf X}_2^{8d} \to 1$.
\end{lem}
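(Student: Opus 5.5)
We will prove this by the same exponential trick used in the proof of \cref{prop:subexponential-Gaussian-norm}. The input is the Poincar\'e bound of Klartag \cite{klartag2023logarihtmic}, with constant $C_n \leq c_1 \log n$, in place of a uniformly bounded constant. Lipschitz concentration does not by itself keep $\frac{1}{n^{4d}}\E\nm{\mathbf X}_2^{8d}$ bounded in the stated range, because the loss is polynomial in $\log n$. So the argument will be arranged so that only the proxy variance enters, and the admissible range of $d$ is read off from it.

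The lower bound is free. By Jensen and isotropy, $\frac{1}{n^{4d}}\E\nm{\mathbf X}_2^{8d} \geq \bigl(\frac{1}{n}\E\nm{\mathbf X}_2^2\bigr)^{4d} = 1$, so only the $\limsup \leq 1$ direction needs work.

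For the upper bound, first use $\log u \leq u - 1$ and $\E\nm{\mathbf X}_2 \leq \sqrt n$, exactly as in \eqref{eq:upper-bound-normalized-norm}. This gives
\[
    \frac{1}{n^{4d}}\E\nm{\mathbf X}_2^{8d} \leq \E\exp\Bigl(\lambda\bigl(\nm{\mathbf X}_2 - \E\nm{\mathbf X}_2\bigr)\Bigr), \qquad \lambda = \frac{8d}{\sqrt n}.
\]
Next, $\mathbf X$ is isotropic log-concave, so its Poincar\'e constant is at most $C_n = c_1\log n$. The map $\nm{\cdot}_2$ is $1$-Lipschitz, so the Gromov--Milman / Bobkov--Ledoux exponential concentration under a Poincar\'e inequality \cite[Corollary~3.2]{Ledoux_2001} applies. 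It gives, for $\abs{\lambda} \leq c/\sqrt{C_n}$,
\[
    \E e^{\lambda(\nm{\mathbf X}_2 - \E\nm{\mathbf X}_2)} \leq \exp\bigl(K C_n \lambda^2\bigr)
\]
for an absolute constant $K$; that is, $\nm{\mathbf X}_2$ is subexponential with proxy variance of order $\log n$.

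Now substitute $\lambda = 8d/\sqrt n$. The constraint becomes $64 d^2 \log n / n \leq c^2/c_1$, which holds for large $n$ because $d = o(\sqrt{n/\log n})$. The bound then reads
\[
    \frac{1}{n^{4d}}\E\nm{\mathbf X}_2^{8d} \leq \exp\Bigl(64 K c_1 \frac{d^2\log n}{n}\Bigr) \to 1.
\]
Combined with the lower bound, this proves the lemma.

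The main obstacle is the concentration step. We must confirm that Poincar\'e concentration yields the moment generating function bound with constant linear in $C_n$ and admissible range $\abs{\lambda} \lesssim C_n^{-1/2}$, with no extra $n$-dependence. If the cited corollary gives only a tail bound, we will integrate it to get the MGF; the usual bound for $\abs{\lambda}\sqrt{C_n} \leq 1$ is $\exp\bigl(K\lambda^2 C_n\bigr)$, which is what is needed. We do not use unconditionality in this step. An alternative route is the thin-shell estimate $\Var\nm{\mathbf X}_2 \lesssim \log n$, but it would not control moments of order $8d$, so we will use the exponential route.
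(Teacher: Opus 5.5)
Your argument is correct, but it takes a different route from the paper.

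The paper quotes Fleury's moment comparison $\bigl(\E\nm{\mathbf X}_2^k\bigr)^{1/k} \leq \bigl(1 + c_2kC/n\bigr)\sqrt n$, valid for $2\le k\lesssim \sqrt{n/C}$ and isotropic log-concave vectors with Poincar\'e constant $C$. It takes $k = 8d$ and $C \lesssim \log n$, which gives $\bigl(1 + O(d\log n/n)\bigr)^{8d}$. You instead reuse the reduction \eqref{eq:upper-bound-normalized-norm} from the proof of \cref{prop:subexponential-Gaussian-norm}. You feed it the standard spectral-gap exponential bound with proxy variance $C_n \lesssim \log n$, which gives $\exp\bigl(O(d^2\log n/n)\bigr)$. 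The two bounds are of the same order, so the admissible range of $d$ is identical.

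What your route buys is self-containment:
\begin{itemize}
\item it needs only the Gromov--Milman/Bobkov--Ledoux estimate, not a specialized moment lemma;
\item it uses log-concavity only through Klartag's bound, so it shows that $d = o(\sqrt{n/C_n})$ suffices for any isotropic vector with Poincar\'e constant $C_n$;
\item it makes precise the paper's footnote that the proof of \cref{prop:subexponential-Gaussian-norm} tolerates a growing proxy variance.
\end{itemize}
In exchange, the paper's route gives an explicit nonasymptotic comparison of all moments of order $k \lesssim \sqrt{n/C}$.

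Two small remarks. First, the ``obstacle'' you flag is harmless. Even the cruder form $\frac{2+\lambda\sqrt{C_n}}{2-\lambda\sqrt{C_n}}$ of the Poincar\'e exponential bound (linear rather than quadratic in $\lambda\sqrt{C_n}$) tends to $1$, because $\lambda\sqrt{C_n} = 8d\sqrt{C_n/n}\to 0$. Second, your opening claim that Lipschitz concentration ``does not by itself'' suffice is misleading, since your proof is exactly Lipschitz concentration. What fails is only its $L^{8d}$-moment form. That form yields $\bigl(1 + O(d\sqrt{\log n/n})\bigr)^{8d}$ and hence only $d = o\bigl((n/\log n)^{1/4}\bigr)$; the centering inside the exponential is what recovers the full range.
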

\begin{proof}
Our starting point is the following result from \cite[Lemma~4]{Fleury_2010_AIHP}. Let $C>0$ be the Poincar\'e constant of the isotropic log-concave random vector $\mathbf{X}$. %Meanwhile, $\nm{\mathbf X}_2$ has finite moments of all orders.
Then, there exist absolute constants $c_1,c_2>0$ such that \[
            \bigl(\E \nm{\mathbf X}_2^k\bigr)^{1/k} \leq \biggl(1 + \frac{c_2 k C}{n}\biggr) \sqrt{n}\quad \text{for all }2\leq k \leq c_1\frac{\sqrt n}{\sqrt C}.
        \] .

% \begin{proof}[Proof of \cref{exa:log-concave}]
    By \cite{klartag2023logarihtmic}, for some absolute constant $c_3 > 0$, the Poincar\'e constant $C \leq c_3\log n$ for any isotropic log-concave measure in $\R^n$. Replacing $k$ by $8d$ in the previous proposition, 
    \[
        \E\biggl(\frac{\nm{\mathbf X}_2}{\sqrt n}\biggr)^{8d} \leq \biggl(1 + \frac{c_4 \cdot 8d}{n/\log n}\biggr)^{8d}
    \] for all $1\leq d \leq c_5 \sqrt{n / \log n}$. In particular, for any $d = o(\sqrt{n / \log n})$, \[
        \limsup_n \E\biggl(\frac{\nm{\mathbf X}_2}{\sqrt n}\biggr)^{8d} \leq 1.
    \] This proves $\frac{1}{n^{4d}}\E\nm{\mathbf X}_2^{8d} \to 1$, since by Jensen's inequality \[
        \E\biggl(\frac{\nm{\mathbf X}_2}{\sqrt n}\biggr)^{8d} \geq \frac{1}{n^{4d}}\bigl(\E\nm{\mathbf X}^2_2\bigr)^{4d} = 1.\qedhere
    \]
\end{proof}   
    Note that assuming the correctness of the KLS conjecture \cite{KLS_1995}, this above argument would give us $\frac{1}{n^{4d}}\E\nm{\mathbf X}_2^{8d} \to 1$ for $d = o(\sqrt n)$, and should then be optimal.

It remains to verify either \cref{cond:8th-block-mmt} or \cref{cond:4-norm-bdd} for all $Ld^2 / n \to 0$ and $1\leq r\leq d$. The following proposition shows that \cref{cond:8th-block-mmt} is correct for some absolute constant $L > 0$ and all $d = o(\sqrt n)$. We also believe the proposition itself might be of independent interest.

\begin{prop} \label{prop:isotropic-lc-8th-block-mmt}
    For any unconditional isotropic log-concave vector $\mathbf X$, there exists an absolute constant $L$ such that \[
        \E(X_1^8\dotsm X_r^8) \leq L^{2r}
    \] for all $1 \leq r \leq n$. (In general, we have for any fixed exponent $1 \leq q < \infty$, $\nm{X_1\dotsm X_r}_{L^q(\Pr)} \leq L^r$.)
\end{prop}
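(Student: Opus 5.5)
The plan is to compare $\mathbf X$ with a product of independent symmetric exponential (Laplace) variables, using the comparison theorem for unconditional log-concave vectors. The key input is the result of Bobkov and Nazarov: if $\mathbf X$ is unconditional, isotropic and log-concave, then for every $t$ and every $i$, $\Pr(\abs{X_i}\ge t)\le C e^{-ct}$. More strongly, $\mathbf X$ is dominated by a vector of i.i.d.\ symmetric exponentials $\mathbf E=(E_1,\dotsc,E_n)$ in the sense that $\E f(\mathbf X)\le \E f(C\mathbf E)$ for coordinatewise nondecreasing functions of $(\abs{X_1},\dotsc,\abs{X_n})$, with an absolute constant $C$.

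If I cannot cite the full domination statement in this generality, I will instead use the joint tail estimate, also due to Bobkov--Nazarov: for an unconditional isotropic log-concave $\mathbf X$ and any $t_1,\dotsc,t_r\ge 0$,
\[
\Pr(\abs{X_1}\ge t_1,\dotsc,\abs{X_r}\ge t_r)\le \exp\bigl(-c(t_1+\dotsb+t_r)\bigr).
\]
This follows from the fact that the density of $(\abs{X_1},\dotsc,\abs{X_n})$ on $\R_+^n$ is log-concave and coordinatewise decreasing, together with the isotropic normalization controlling the density at the origin. With the joint tail bound in hand, I write
\[
\E\bigl(\abs{X_1}^q\dotsm\abs{X_r}^q\bigr)=\int_{\R_+^r}\Pr\bigl(\abs{X_1}\ge s_1^{1/q},\dotsc,\abs{X_r}\ge s_r^{1/q}\bigr)\,ds_1\dotsm ds_r,
\]
which holds by Fubini applied to the layer-cake representation in each variable. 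Bounding the integrand by $\prod_j e^{-cs_j^{1/q}}$ makes the integral factor:
\[
\E\bigl(\abs{X_1}^q\dotsm\abs{X_r}^q\bigr)\le\Bigl(\int_0^\infty e^{-cs^{1/q}}\,ds\Bigr)^r=\bigl(c^{-q}\,\Gamma(q+1)\bigr)^r.
\]
Taking $q=8$ gives $\E(X_1^8\dotsm X_r^8)\le (8!\,c^{-8})^r$, which is at most $L^{2r}$ with $L=(8!)^{1/2}c^{-4}$, an absolute constant. For general $q$, taking $q$-th roots yields $\nm{X_1\dotsm X_r}_{L^q(\Pr)}\le (\Gamma(q+1))^{r/q}c^{-r}\le L_q^r$, which is the parenthetical claim.

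The main obstacle is the joint tail bound with constants independent of $n$ and $r$; the marginal bound alone is not enough because the coordinates are dependent. My approach is the Bobkov--Nazarov argument. Let $g$ be the log-concave density of $\abs{\mathbf X}$ on $\R_+^n$, which is nonincreasing in each coordinate. Monotonicity gives $g(\mathbf 0)\le C^n$ under isotropy, using $\E X_i^2=1$ and the standard bound on the value of a log-concave density at its maximum. The region $\{x_1\ge t_1,\dotsc,x_r\ge t_r\}$ is a translate of the orthant, and log-concavity combined with coordinatewise monotonicity lets one compare $g$ on this translate with $g$ near the origin, shifted by the vector $(t_1,\dotsc,t_r,0,\dotsc,0)$. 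This produces the factor $e^{-c\sum t_j}$. I will reduce first to the marginal of $(X_1,\dotsc,X_r)$, which is again unconditional, isotropic and log-concave in $\R^r$, so it suffices to prove the estimate in dimension $r$ with all $n=r$ coordinates thresholded. If the self-contained argument becomes delicate, I will cite Bobkov--Nazarov directly for this exponential joint tail estimate.
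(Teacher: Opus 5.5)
Your argument is correct, but it rests on a different key lemma than the paper's. The paper never uses a tail estimate. It bounds the density $g^+$ of $(2\abs{X_1},\dotsc,2\abs{X_r})$ pointwise, combining three ingredients:
the inequality $g^+(y)\prod_j y_j\le 1$ (from coordinatewise monotonicity);
an affine function separating the concave $\log\prod_j y_j$ from the convex $-\log g^+$;
the maximum-entropy bound $g(0)\ge(2\pi e)^{-r/2}$.
This gives $g^+(y)\le e^r\prod_j\lambda_j e^{-\lambda_j y_j}$, with only the product $\prod_j\lambda_j$ under control. That is enough exactly because all exponents are equal, so $\int_{[0,\infty)^r}\prod_j y_j^8\lambda_j e^{-\lambda_j y_j}\,dy=(8!)^r\bigl(\prod_j\lambda_j\bigr)^{-8}$.

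You instead import the Bobkov--Nazarov joint tail bound and integrate it with the layer-cake formula. Your Fubini step and the computation $\int_0^\infty e^{-cs^{1/q}}\,ds=\Gamma(q+1)c^{-q}$ are right, giving $L=\sqrt{8!}\,c^{-4}$ and $L_q=\Gamma(q+1)^{1/q}/c$. Your route is shorter. Because the tail bound controls every coordinate at the same rate, it also handles unequal exponents, $\E\prod_j\abs{X_j}^{q_j}\le\prod_j\Gamma(q_j+1)c^{-q_j}$, which the paper's product-only control of the $\lambda_j$ cannot. The paper's route is self-contained and produces a pointwise density bound rather than a tail bound.

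Two minor remarks. First, the ``full domination'' for every coordinatewise nondecreasing $f$ in your first paragraph is more than the Bobkov--Nazarov estimate provides. It gives upper-orthant domination, which covers products of nonnegative nondecreasing functions of the $\abs{X_j}$; you never use the stronger claim, so this does not affect the proof.

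Second, your sketched self-contained proof of the tail bound points at the wrong ingredient: an upper bound $g(0)\le C^n$ forces no decay. If you want to avoid the citation, argue as follows. By unconditionality, $\Phi(t)=\log\Pr(\abs{X_i}\ge t_i\ \forall i)=\log\bigl(2^n\Pr(\mathbf X\in t+[0,\infty)^n)\bigr)$. This is concave on $[0,\infty)^n$ by Pr\'ekopa--Leindler applied to translates of a convex cone, and $\Phi(0)=0$. Its directional derivative at $0$ in direction $t$ is $-2\sum_i f_{X_i}(0)t_i$, where $f_{X_i}$ is the density of $X_i$. A symmetric log-concave density with unit variance has $f_{X_i}(0)\ge 1/\sqrt{12}$, so concavity yields $\Phi(t)\le-\sum_i t_i/\sqrt3$.
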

\begin{proof}
    We adapt a slick argument that appeared in \cite[Proposition~3.1]{Bobkov_Nazarov_2003}. Let $\mathbf X = (X_1,\dotsc,X_n)$ follow the log-concave unconditional density $f$ in $\R^n$. Define $g$ to be the marginal of $f$ in an $r$-dimensional subspace. Now let $g^+(y) = g(\frac{1}{2}y)$ for $y \in [0,\infty)^r$, which is the new density for the random vector $(2\abs{X_1},\dotsc,2\abs{X_r})$. This $g^+$ continues to be a log-concave density, and is decreasing in each coordinate.

    It follows that \[
    g^+(y)\prod_{j=1}^r y_j = g^+(y) \int_0^{y_1}\dotsi \int_0^{y_n} 1\,dx \leq \int_0^{y_1}\dotsi \int_0^{y_n} g^+(x)\,dx \leq 1,
\] and therefore $\log \prod_{j=1}^r y_j \leq -\log g^+(y)$ for all $y \in [0,\infty)^r$. (Here we define $\log 0 = -\infty$.) This is a concave function bounded above by a convex function. Write them respectively as $v(y)$ and $u(y)$, and there exists a linear function such that \[
    v(y) \leq h(y) \leq u(y) \quad\text{for all }y\in [0,\infty)^r,
\] while satisfying \[
    h(y) = \log \prod_{j=1}^r a_j + \biggl\langle{\grad \log \prod_{j=1}^r a_j},y - a\biggr\rangle,
\] i.e., $h$ is tangent to the concave function $v$ at the point $a = (a_1,\dotsc,a_r) \in (0,\infty)^r$.

Now take $\lambda_j = 1 / a_j$, the inequality $u(y) \geq h(y)$ simplifies to \[
    g^+(y) \leq e^r \prod_{j=1}^r \lambda_j e^{-\lambda_j y_j}.
\] 
% \dm{This looks a bit weird. Either something is true or it's not. If it's true it doesn't matter who mentions it.}
Since $\int x\, g(x)\,dx = 0$ and $\varphi \coloneqq -\log g$ is convex, \[
    -\log g(0) = \varphi(0) = \varphi\biggl(\int x\,g(x)\,dx \biggr) \leq \int \varphi(x) g(x)\,dx = -\int g\log g.
\] Recall over all isotropic densities, the differential entropy $g\mapsto -\int g\log g$ is maximized when $g$ is the standard Gaussian density. Hence \[g^+(0) = g(0)\geq \exp\biggl(\int g \log g\biggr) \geq \biggl(\frac{1}{\sqrt{2\pi e}}\biggr)^r\] (this fact was discussed in the introduction of \cite{klartag2025affirmative}). It follows that $\prod_{j=1}^r \lambda_j \geq \bigl(\frac{1}{e\sqrt{2\pi e}}\bigr)^r$. Therefore \begin{align*}
     2^{8r} \E(X_1^8\dotsm X_r^8) = \int \prod_{j=1}^r y_j^8\, g^+(y)\,dy & \leq \int \prod_{j=1}^r y_j^8 \biggl(e^r \prod_{j=1}^r \lambda_j e^{-\lambda_j y_j}\biggr)\,dy \\ & = e^r \prod_{j=1}^r \frac{8!}{\lambda_j^8} \leq (8!\,e)^r \bigl(e\sqrt{2\pi e}\bigr)^{8r}.
\end{align*} This means we have found an absolute constant $L$ such that $\E(X_1^8\dotsm X_r^8) \leq L^{2r}$.
\end{proof}
The proof of \cref{prop:log-concave} is now immediate.
\begin{proof}[Proof of \cref{prop:log-concave}]
    We have just verified \cref{thm:the-norm-cond} conditions~\ref{cond:2-norm-conv}\ref{cond:8th-block-mmt}, and therefore we conclude that when $d = o(\sqrt{n /\log n})$ and the base vector $\mathbf X$ is an isotropic, exchangeable, unconditional, and log-concave vector, $\ESD(K_{\PrincipalT})\wkconv \mu_{\MP(c)}$ with probability $1$.
\end{proof}

\begin{rem*}
We briefly discuss that using \cref{cond:4-norm-bdd} would instead give us the weaker $d = o(n^{1/3})$, and we cannot improve it any further. In fact, the reader will see that in contrast to \cref{prop:isotropic-lc-8th-block-mmt}, which holds universally for any $r \leq n$, the bound $\frac{1}{n^{2r}}\E \nm{\mathbf X}_4^{8r} \leq L^{2r}$ is true for some \emph{constant} $L$ independent of $n$ if and only if $r \leq d = o(n^{1/4})$.

     For general $\ell^k$ norms, according to \cite[Theorem~2]{Latala_modified_Paouris}, $\bigl(\E \nm{\mathbf X}_4^{8r} \bigr)^{1/8r}\leq C(4n^{1/4} + 8r)$. This leads to \[
        \frac{1}{n^{2r}}\E \nm{\mathbf X}_4^{8r} \leq \biggl[C^4 \Bigl(4 + \frac{8r}{n^{1/4}}\Bigr)^{4}\biggr]^{2r} \leq \max\biggl\{C^4 \Bigl(4 + \frac{8d}{n^{1/4}}\Bigr)^{4},1\biggr\}^{2r}.
    \] Set this maximum to be our $L$. When $d = o(n^{1/3})$, the maximum is also $o(n^{1/3})$, and so $Ld^2 / n\to 0$.
    
    If we use \cref{cond:4-norm-bdd}, we cannot improve the growth condition beyond $d = o(n^{1/3})$ for log-concave measures and for measures satisfying the Poincar\'e inequality.
    
    Consider the product of isotropic Laplace distributions, with density $\frac{1}{\sqrt 2}e^{-\sqrt 2\abs{x}}\,dx$. This is a log-concave measure with Poincar\'e constant $2$, but does not satisfy the log-Sobolev inequality. In our case, \[
    \bigl(\E\nm{\mathbf X}_4^{8r}\bigr)^{1/8r} \geq \bigl(\E X_1^{8r}\bigr)^{1/8r} \geq \frac{1}{C_1}8r
\] for some absolute positive constant $C_1$. The last inequality comes from $\E X_1^{8r} = (8r)! / (\sqrt{2})^{8r}$, and Stirling's approximation. It follows that \[
    \frac{1}{n^{2r}}\E\nm{\mathbf X}_4^{8r} \geq \biggl[\Bigr(\frac{8r}{C_1n^{1/4}}\Bigr)^{4}\biggr]^{2r}.
\] If $r \leq d$ is on the order of $n^{1/3}$, then the term inside the bracket would also be on the order of $n^{1/3}$. This would violate our requirement in \cref{thm:the-norm-cond}\ref{cond:4-norm-bdd} that $\frac{1}{n^{2r}}\E\nm{\mathbf X}_4^{8r} \leq L^{2r}$ for some sequence $L$ satisfying $Ld^2 = o(n)$. However, by \cref{thm:indep-tensor}, for base vectors following the product of Laplace distributions, $\ESD(K_{\PrincipalT})$ converges to the MP law for any $d = o(\sqrt n)$.
\end{rem*}

\subsection{Explicit computation for \texorpdfstring{$\ell^k$}{lk} spherical distributions}

For the $\Inv_k(R)$ distributions, which include the uniform measure on the $\ell^k$ spheres and balls, we start by computing the block moments.

\begin{lem} \label{lem:moments-lp-sphere}
    For $0<k<\infty$, let $(X_1,\dotsc,X_n) \sim \Inv_k(R)$. Then \[
        \E(X_1^4\dotsm X_r^4 X_{r+1}^2\dotsm X_{2d - r}^2) = \frac{\Gamma(5/k)^r \,\Gamma(3/k)^{2d - 2r}}{\Gamma(1/k)^{2d - r}} \cdot \frac{\Gamma\bigl(\frac{n}{k}\bigr)}{\Gamma\bigl(\frac{n+4d}{k}\bigr)}\cdot \E(R^{4d}).
    \]
    Let $r = 0$ and replace $2d$ by $d$, we get \[
        \E(X_1^2\dotsm X_d^2) = \frac{\Gamma(3/k)^d}{\Gamma(1/k)^d} \cdot \frac{\Gamma\bigl(\frac{n}{k}\bigr)}{\Gamma\bigl(\frac{n+2d}{k}\bigr)}\cdot \E(R^{2d}),
    \] and taking $d = 1$ gives \[
        \E(X_1^2) = \frac{\Gamma(3/k)}{\Gamma(1/k)} \cdot \frac{\Gamma\bigl(\frac{n}{k}\bigr)}{\Gamma\bigl(\frac{n+2}{k}\bigr)}\cdot \E(R^{2}).
    \]
\end{lem}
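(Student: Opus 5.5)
We write $\mathbf X = R\,\mathbf Z/\nm{\mathbf Z}_k$ with $\mathbf Z$ a standard $k$-Gaussian independent of $R$. The monomial $X_1^4\dotsm X_r^4 X_{r+1}^2\dotsm X_{2d-r}^2$ is homogeneous of degree $4d$ in $\mathbf X$. Using the independence of $R$ from the direction, the expectation therefore factors as $\E(R^{4d})$ times
\[
\E\biggl(\frac{Z_1^4\dotsm Z_r^4 Z_{r+1}^2\dotsm Z_{2d-r}^2}{\nm{\mathbf Z}_k^{4d}}\biggr).
\]
So everything reduces to computing this normalized moment of the cone measure on the $\ell^k$ sphere.

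The key step is the classical independence property, a Schechtman--Zinn type fact: $\nm{\mathbf Z}_k$ is independent of $\mathbf Z/\nm{\mathbf Z}_k$. We will justify it by a polar-coordinate change of variables. The density of $\mathbf Z$ is proportional to $\exp(-\nm{z}_k^k/k)$, so it depends only on $\nm{z}_k$. Writing $z = \rho\,\theta$ with $\nm{\theta}_k = 1$, the Lebesgue measure factors as $\rho^{n-1}\,d\rho$ times the cone measure in $\theta$. Given this independence, we obtain
\[
\E\bigl(Z_1^4\dotsm Z_r^4 Z_{r+1}^2\dotsm Z_{2d-r}^2\bigr) = \E\bigl(\nm{\mathbf Z}_k^{4d}\bigr)\cdot \E\biggl(\frac{Z_1^4\dotsm Z_{2d-r}^2}{\nm{\mathbf Z}_k^{4d}}\biggr),
\]
and the desired normalized moment is the ratio of two explicit quantities.

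Both quantities are Gamma-function computations.
\begin{enumerate}
\item For the numerator, the coordinates are i.i.d. For a single coordinate, the substitution $u = \abs{z}^k/k$ gives
\[
\E\abs{Z_1}^q = k^{q/k}\,\frac{\Gamma\bigl(\frac{q+1}{k}\bigr)}{\Gamma\bigl(\frac1k\bigr)}.
\]
The numerator is then $k^{4d/k}\,\Gamma(5/k)^r\,\Gamma(3/k)^{2d-2r}/\Gamma(1/k)^{2d-r}$.
\item For the denominator, $\nm{\mathbf Z}_k^k/k = \sum_i \abs{Z_i}^k/k$ is a sum of $n$ i.i.d.\ $\Gam(1/k,1)$ variables, hence $\Gam(n/k,1)$. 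Therefore
\[
\E\nm{\mathbf Z}_k^{4d} = k^{4d/k}\,\frac{\Gamma\bigl(\frac{n+4d}{k}\bigr)}{\Gamma\bigl(\frac nk\bigr)}.
\]
\end{enumerate}
Dividing, the powers $k^{4d/k}$ cancel and we get exactly the claimed formula.

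The two specializations need no new work. Applying the same argument to the degree-$2d$ monomial $X_1^2\dotsm X_d^2$ gives the second formula, and setting $d=1$ gives the third.

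The only genuinely nontrivial step is the independence of $\nm{\mathbf Z}_k$ and $\mathbf Z/\nm{\mathbf Z}_k$. We will treat it by the polar decomposition described above. Alternatively, one can note that the Gamma variables $\abs{Z_i}^k/k$ make the normalized vector $\bigl(\abs{Z_i}^k/\nm{\mathbf Z}_k^k\bigr)_i$ Dirichlet distributed and independent of the sum, which is Lukacs' theorem; the signs are independent of everything by symmetry. We should also remark that all moments involved are finite, so the factorization of expectations is legitimate.
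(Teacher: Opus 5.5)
Your proposal is correct and follows essentially the same route as the paper. Both arguments factor out $\E(R^{4d})$ and use the independence of $\nm{\mathbf Z}_k$ and $\mathbf Z/\nm{\mathbf Z}_k$ to write the normalized moment as $\E(Z_1^4\dotsm Z_r^4 Z_{r+1}^2\dotsm Z_{2d-r}^2)/\E\nm{\mathbf Z}_k^{4d}$, then evaluate both pieces by Gamma integrals (the paper uses $\nm{\mathbf Z}_k^k\sim\Gam(n/k,k)$, which is equivalent to your $\Gam(n/k,1)$ for $\nm{\mathbf Z}_k^k/k$); the only difference is that you sketch a justification of that independence, which the paper takes as known.
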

\begin{proof}
    Let $(Z_1,\dotsc,Z_n)$ be an $n$-dimensional $k$-Gaussian independent of $R$. \[\E(X_1^4\dotsm X_r^4 X_{r+1}^2\dotsm X_{r+s}^2) = \E(R^{4d})\E\biggl(\frac{Z_1^4\dotsm Z_r^4 Z_{r+1}^2\dotsm Z_{r+s}^2}{\nm{\mathbf Z}_k^{4d}}\biggr).\]
    By independence between $\nm{\mathbf Z}_k$ and $\bigl(\frac{Z_1}{\nm{\mathbf Z}_k},\dotsc,\frac{Z_n}{\nm{\mathbf Z}_k}\bigr)$, we obtain \[
        \E\bigl(\nm{\mathbf Z}^{4d}_k\bigr) \E\biggl(\frac{Z_1^4\dotsm Z_r^4 Z_{r+1}^2\dotsm Z_{2d-r}^2}{\nm{\mathbf Z}^{4d}_k}\biggr) = \E(Z_1^4\dotsm Z_r^4 Z_{r+1}^2\dotsm Z_{2d-r}^2) = \bigl(\E Z_1^4\bigr)^r\cdot \bigl(\E Z_1^2\bigr)^{2d - 2r},\] which is equal to \[
            k^{4d / k} \frac{\Gamma(5/k)^r \,\Gamma(3/k)^{2d - 2r}}{\Gamma(1/k)^{2d - r}}.
        \]
    Note that $\nm{\mathbf Z}_k^k \sim \Gam(n / k,k)$, where $n/k$ is the shape parameter and $k$ is the scale parameter. Its $(4d/k)$-th moment is $k^{4d / k} \frac{\Gamma(\frac{n+4d}{k})}{\Gamma(\frac{n}{k})}$. Putting all the equations above together gives us the desired formula in the lemma.
\end{proof}

To prove \cref{prop:lk-general-cond}, we need to know the asymptotic ratio of Gamma functions that appeared in the previous lemma.
\begin{lem}
    For real numbers $x > 0$ and $\alpha \geq 1$, with $\alpha = o(\sqrt x)$, we have \[
        \frac{\Gamma(x + \alpha)}{\Gamma(x)} = x^\alpha \biggl[1 + \frac{\alpha^2 - \alpha}{2x} + O\biggl(\frac{\alpha^4}{x^2}\biggr)\biggr] = x^\alpha \bigl(1+o(1)\bigr).
    \]
\end{lem}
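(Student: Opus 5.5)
The plan is to work with logarithms and expand $\log\Gamma(x+\alpha)-\log\Gamma(x)$ using Stirling's series. Write
\[
\log\frac{\Gamma(x+\alpha)}{\Gamma(x)} = \alpha\log x + \Phi(x,\alpha),
\]
and show that $\Phi(x,\alpha) = \frac{\alpha^2-\alpha}{2x} + O(\alpha^3/x^2)$. Exponentiating then gives the claimed expansion. Since $\alpha=o(\sqrt x)$, we have $\frac{\alpha^2-\alpha}{2x}=o(1)$ and $\alpha^3/x^2 = o(\alpha^2/x)$, so $e^{\Phi}=1+\frac{\alpha^2-\alpha}{2x}+O(\alpha^4/x^2)$ from $e^{y}=1+y+O(y^2)$ with $y=O(\alpha^2/x)$. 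This also gives the cruder $x^\alpha(1+o(1))$. The assumption $x\to\infty$ is implicit, since $\alpha\geq1$ forces $x\to\infty$.

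The main computation uses Stirling's formula in the form $\log\Gamma(z)=(z-\tfrac12)\log z - z + \tfrac12\log(2\pi) + \frac{1}{12z}+O(z^{-3})$, which holds uniformly for $z\geq 1$ (and we may take $x$ large). Subtracting gives
\[
\Phi=(x+\alpha-\tfrac12)\log\Bigl(1+\frac{\alpha}{x}\Bigr)-\alpha+\frac{1}{12}\Bigl(\frac{1}{x+\alpha}-\frac1x\Bigr)+O(x^{-3}).
\]
Expand $\log(1+t)=t-\frac{t^2}{2}+\frac{t^3}{3}+O(t^4)$ with $t=\alpha/x=o(1)$. Then
\[
(x+\alpha-\tfrac12)\Bigl(t-\tfrac{t^2}{2}+O(t^3)\Bigr) = \alpha + \frac{\alpha^2}{x}-\frac{\alpha^2}{2x} - \frac{\alpha}{2x} + O\Bigl(\frac{\alpha^3}{x^2}\Bigr).
\]
Here the contribution $x\cdot t^3/3$ is $\alpha^3/(3x^2)$, and the remaining cross terms are of order $\alpha^3/x^2$ or smaller. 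The Stirling correction term is $O(\alpha/x^2)$. Hence $\Phi=\frac{\alpha^2-\alpha}{2x}+O(\alpha^3/x^2)$.

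The only delicate point is bookkeeping the error terms uniformly in $\alpha$, which is allowed to grow. I would handle this by using the explicit Taylor remainder bound $|\log(1+t)-t+t^2/2|\leq t^3/3$ for $t\geq0$, and the uniform Stirling remainder. Since $\alpha^3/x^2\leq \alpha^4/x^2$ for $\alpha\geq1$, every error term is absorbed into $O(\alpha^4/x^2)$.
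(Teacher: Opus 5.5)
Your proposal is correct and follows essentially the same route as the paper: subtract Stirling's expansion of $\log\Gamma$ at $x+\alpha$ and at $x$, Taylor-expand $(x+\alpha-\tfrac12)\log(1+\alpha/x)-\alpha$ to get $\frac{\alpha^2-\alpha}{2x}+O(\alpha^3/x^2)$, then exponentiate and absorb all errors into $O(\alpha^4/x^2)$ using $\alpha\geq 1$. Your explicit remainder bound for $\log(1+t)$ and your remark that $\alpha\geq1$ forces $x\to\infty$ make the uniformity in growing $\alpha$ slightly more transparent than the paper's write-up.
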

Traditionally this result was only stated for fixed $\alpha$, but was proved in general for complex-valued $x$ and $\alpha$ (provided the Gamma functions are well-defined); see for example \cite{Tricomi_Erdelyi_1951}. Here we consider $\alpha$ that may grow with $x$. 

\begin{proof}
For the log Gamma function, we have the following asymptotic Stirling's formula 
    \[
    \log \Gamma(x) = \biggl(x  - \frac{1}{2}\biggr) \log x - x + \log \sqrt{2\pi} + \frac{1}{12x} + O(x^{-3});
    \] see \cite[page~368]{Gamelin_complex}. Noticing that $\log (x+\alpha) = \log x + \log(1 + \frac{\alpha }{x})$, we then have \begin{equation} \label{eq:log-diff-Gamma}
    \log \Gamma(x+\alpha) - \log \Gamma(x) = \alpha \log x + \biggl(x + \alpha - \frac{1}{2}\biggr) \log\biggl(1 + \frac{\alpha}{x}\biggr) - \alpha +O(\alpha x^{-2}) + O(x^{-3}).
\end{equation}
Let us focus on the middle term \[
    \biggl(x + \alpha - \frac{1}{2}\biggr) \log\biggl(1 + \frac{\alpha}{x}\biggr) - \alpha.
\] First, by $\alpha = o(\sqrt x)$, Taylor's theorem gives us \[
    \log\biggl(1 + \frac{\alpha}{x}\biggr)  = \frac{\alpha}{x} - \frac{\alpha^2}{2x^2} + O\biggl(\frac{\alpha^3}{x^3}\biggr).
\] One can then verify that given $\alpha  = o(\sqrt x)$, \[
    \biggl(x + \alpha - \frac{1}{2}\biggr) \biggl[\frac{\alpha}{x} - \frac{\alpha^2}{2x^2} + O\biggl(\frac{\alpha^3}{x^3}\biggr)\biggr] - \alpha = \frac{\alpha^2 - \alpha}{2x} + O\biggl(\frac{\alpha^3}{x^2}\biggr)
\] Returning back to \eqref{eq:log-diff-Gamma}, we now have \begin{equation}
    \log\frac{\Gamma(x + \alpha)}{\Gamma(x)} = \alpha \log x + \frac{\alpha^2 - \alpha}{2x} + O\biggl(\frac{\alpha^3}{x^2}\biggr).
    \end{equation}
Thus, \begin{align*}
    \frac{\Gamma(x + \alpha)}{\Gamma(x)} & = x^\alpha \exp\biggl( \frac{\alpha^2 - \alpha}{2x} + O\biggl(\frac{\alpha^3}{x^2}\biggr)\biggr) \\
    & = x^\alpha\biggl[1 + \frac{\alpha^2 - \alpha}{2x} + O\biggl(\frac{\alpha^3}{x^2}\biggr) + O\biggl(\Bigl(\frac{\alpha^2 - \alpha}{2x}\Bigr)^2 + O\Bigl(\frac{\alpha^6}{x^4}\Bigr)\biggr)\biggr] \\
    & = x^\alpha \biggl[1 + \frac{\alpha^2 - \alpha}{2x} + O\biggl(\frac{\alpha^4}{x^2}\biggr)\biggr],
\end{align*} where we used Taylor's theorem in the second line.
\end{proof}

\begin{proof}[Proof of \cref{prop:lk-general-cond}]

    % First, it was proven in \cite{Tricomi_Erdelyi_1951} that for any $x,\alpha\in \R$, \[\frac{\Gamma(x + \alpha)}{\Gamma(x)} = x^{\alpha - \beta} \biggl(1 + \frac{1}{2x}(\alpha)(\alpha - 1) + O(x^{-2})\biggr),\] provided that the two Gamma functions are defined. \fc{fix this, take $n/k$ out} 
    
%     It follows that \begin{align*}
%     \frac{\Gamma\bigl(\frac{n}{k}\bigr)}{\Gamma\bigl(\frac{n+2d}{k}\bigr)} & = \biggl(\frac{n}{k}\biggr)^{-2d/k} \biggl(1 + \frac{k}{2n} \Bigl(-\frac{2d}{k}\Bigr)\Bigl(\frac{2d}{k} - 1\Bigr) + O\Bigl(\frac{k^2}{n^2}\Bigr)\biggr) \\
%     & = \biggl(\frac{n}{k}\biggr)^{-2d/k} \biggl(1 - \frac{2}{k}\cdot \frac{d^2}{n} + \frac{d}{n} + O\Bigl(\frac{k^2}{n^2}\Bigr)\biggr).
% \end{align*}

For fixed $k<\infty$, given $d = o(\sqrt n)$, by the previous proposition, we then have \[
    \frac{\Gamma\bigl(\frac{n}{k}\bigr)}{\Gamma\bigl(\frac{n+2d}{k}\bigr)} =\biggl(\frac{n}{k}\biggr)^{-2d/k}\bigl(1+o(1)\bigr) \quad \text{and}\quad
    \frac{\Gamma\bigl(\frac n k\bigr)}{\Gamma \bigl(\frac{n+4d}{k}\bigr)} =\biggl(\frac{n}{k}\biggr)^{-4d/k}\bigl(1+o(1)\bigr).
\]
\Cref{lem:moments-lp-sphere}, along with the convergence of ratios between Gamma functions, allows us to conclude that \cref{cond:expec-lk,cond:var-lk} in our proposition imply \cref{thm:main} \cref{cond:expec,cond:var}, respectively.% \footnote{in fact, probably equivalent}

Lastly, \cref{thm:main}\ref{cond:2nd-4th-compare} holds for $L = \frac{\Gamma(5 / k)\Gamma(1/k)}{\Gamma(3/k)^2} + 1$. To see this, first notice \[
    \frac{\Gamma(5/k)^r \,\Gamma(3/k)^{2d - 2r}}{\Gamma(1/k)^{2d - r}} \leq \biggl[\frac{\Gamma(5 / k)\Gamma(1/k)}{\Gamma(3/k)^2}\biggr]^r \frac{\Gamma(3/k)^{2d}}{\Gamma(1/k)^{2d}},
    \] which by \cref{lem:moments-lp-sphere} further implies $\E(X_1^4\dotsm X_r^4 X_{r+1}^2 \dotsm X_{2d-r}^2) \leq (L-1)^r \E(X_1^2\dotsm X_{2d}^2)$. By $\E(X_1^2\dotsm X_{2d}^2) \to 1$ that we just verified, we obtain for large enough $n$ that \[
        \E(X_1^4\dotsm X_r^4 X_{r+1}^2 \dotsm X_{2d-r}^2) \leq L^r.
    \] % \dm{I'm a bit confused about condition (C) here. I think there are some more things to say. It would be better to rerrange this proof. Say what you prove and then prove it.}.

The proof of the case $k = \infty$ is similar to the above. Let $\mathbf Z \sim \Unif([-1,1]^n)$ and $X = R\frac{\mathbf Z}{\nm{\mathbf Z}_\infty}$. Note $\nm{\mathbf Z}_\infty$ is the maximum of $n$ i.i.d.\ $\Unif[0,1]$ random variables, and so $\E\nm{\mathbf Z}_\infty^{4d} = \frac{n}{4d+n}$. Going through the same computation, one can conclude that satisfying $(1/3)^d\E(R^{2d}) \to 1$ and $(1/3)^{2d}\Var(R^{2d}) \to 0$ would imply \cref{thm:main} \cref{cond:expec,cond:var}, and verify that \[\E(X_1^4\dotsm X_r^4 X_{r+1}^2\dotsm X_{2d-r}^2) \leq (9/5)^r \E(X_1^2\dotsm X_{2d}^2),\] which implies \cref{cond:2nd-4th-compare}. We omit the details.
\end{proof}

\printbibliography

\end{document}